\theoremstyle{plain}
\newtheorem{thm}{Theorem}[section]
\newtheorem{propn}[thm]{Proposition}
\newtheorem{lem}[thm]{Lemma}
\newtheorem{cor}[thm]{Corollary}
\theoremstyle{definition}
\newtheorem{defn}[thm]{Definition}
\newtheorem{eg}[thm]{Example}
\newtheorem{q}[thm]{Question}
\theoremstyle{remark}
\newtheorem{rmk}[thm]{Remark}
\newcommand{\A}{\mathbf{A}}
\newcommand{\C}{\mathbf{C}}
\newcommand{\F}{\mathbf{F}}
\renewcommand{\P}{\mathbf{P}}
\newcommand{\Q}{\mathbf{Q}}
\newcommand{\R}{\mathbf{R}}
\newcommand{\Z}{\mathbf{Z}}
\let\le\leqslant
\let\ge\geqslant
\let\into\hookrightarrow
\let\bar\overline
\renewcommand{\tilde}{\widetilde}
\newcommand{\an}{\mathrm{an}}
\DeclareMathOperator{\End}{End}
\DeclareMathOperator{\Fix}{Fix}
\DeclareMathOperator{\lcm}{lcm}
\DeclareMathOperator{\Nm}{Nm}
\newcommand{\Num}{\mathcal{N}}
\DeclareMathOperator{\ord}{ord}
\DeclareMathOperator{\Per}{Per}
\DeclareMathOperator{\PGL}{PGL}
\DeclareMathOperator{\Poly}{Poly}
\DeclareMathOperator{\PrePer}{PrePer}
\DeclareMathOperator{\Rat}{Rat}
\DeclareMathOperator{\Res}{Res}
\DeclareMathOperator{\rk}{rk}
\DeclareMathOperator{\sgn}{sgn}
\newcommand{\vis}{\mathrm{vis}}
\DeclareMathOperator{\vol}{vol}
\definecolor{my_red}{RGB}{199,69,65}
\definecolor{my_green}{RGB}{76,151,88} 
\definecolor{my_blue}{RGB}{66,126,186}
\definecolor{my_purple}{RGB}{112,85,175}
\numberwithin{equation}{section} 
\begin{document}

\title[Distribution of preperiodic points]{Distribution of preperiodic points in one-parameter families of rational maps}
\author{Matt Olechnowicz}
\address{Department of Mathematics \& Statistics, Concordia University}
\email{matt.olechnowicz@concordia.ca}
\date{\today}

\begin{abstract}
Let $f_t$ be a one-parameter family of rational maps 
defined over a number field $K$.
We show that for all $t$ outside of a set of natural density zero, 
every $K$-rational preperiodic point of $f_t$ is the specialization of some $K(T)$-rational preperiodic point of $f$.
Assuming a weak form of the Uniform Boundedness Conjecture, 
we also 
calculate the average number of $K$-rational preperiodic points of $f$, 
giving some examples where this holds unconditionally.
To illustrate the theory, 
we give new estimates on the average number of preperiodic points 
for the quadratic family $f_t(z) = z^2 + t$ over the field of rational numbers.
\end{abstract}

\maketitle

\section{Introduction}

Let $K$ be a number field of degree $n \ge 1$ 
and let $\phi : \P^N \to \P^N$ be a morphism of degree $d \ge 2$. 
A point $P \in \P^N(K)$ is called \emph{preperiodic} if the sequence 
\[P, \, \phi(P), \, \phi^2(P), \ldots\]
is eventually periodic.
By Northcott's theorem \cite[Theorem 3]{Northcott},
the set 
\[\PrePer(\phi, K) = \{P \in \P^N(K) : P \text{ is preperiodic under } \phi\}\]
of $K$-rational preperiodic points of $\phi$
is finite,
and Morton and Silverman have conjectured \cite[p.~100]{MortonSilverman1994}
that its cardinality is uniformly bounded 
in terms of $d$, $N$, and $n$ only:
\begin{equation} \label{eq:UBC}
C(d, N, n)
:= \! \! \sup_{\substack{[K : \Q] \le n \\ \phi \in \End^N_d(K)}} \! \#{\PrePer}(\phi, K) < \infty.
\end{equation}
This is the Uniform Boundedness Conjecture (UBC).\footnote{The variety $\End^N_d$ is the parameter space of degree-$d$ endomorphisms of $\P^N$.}
Recently, Looper \cite[Theorem 1.2]{Looper} proved \eqref{eq:UBC} for polynomial maps of $\P^1$ assuming a generalization of the $abc$ conjecture.
At present, the best available (unconditional) bounds on $\#{\PrePer}(\phi, K)$ depend on the coefficients of $\phi$ in some nontrivial way (e.g.,~Benedetto \cite[Theorem 7.1]{Benedetto}, Troncoso \cite[Corollary 1.3(c)]{Troncoso}).

Inspired by recent advances 
in arithmetic statistics, 
one might wonder whether the Uniform Boundedness Conjecture is true ``on average''.
In this paper, 
we focus on 
one-parameter families $f_t \in K(z)$
of rational functions of degree $d \ge 2$
over a fixed number field $K$.
    By this we mean 
    the collection $\{f_t(z)\}_{t \in K}$ of specializations 
	of a single rational function $f \in K(T)(z)$,
	so that
	\[f_t(z) = \frac{a_d(t) z^d + \ldots + a_0(t)}{b_d(t) z^d + \ldots + b_0(t)} \in K(z)\]
	as the parameter $t$ varies in $K$ (see Section \ref{sec:domain_of_defn}).
Let $U$ be the domain of the rational map $\P^1 \dashrightarrow \Rat_d = \End^1_d$ induced by sending $t$ to $f_t$, 
and let $H : \P^1(K) \to \R_{\ge 1}$ denote the absolute multiplicative height.
Our goal is to estimate the total number of $K$-rational preperiodic points of the family, i.e.,~the quantity
\begin{equation} \label{eq:our_focus}
    \mathcal{A}(X) = \! \sum_{\substack{t \in U(K) \\ H(t) \le X}} \! \#{\PrePer}(f_t, K),
\end{equation}
in relation to the number of terms $\Num(U(K), X) = \Num(K, X) + O(1)$ as $X \to \infty$.
Note that $\Num(K, X) \sim c_K X^{2n}$ by Schanuel's theorem (see Section \ref{sec:def_heights}).

What is the main term in \eqref{eq:our_focus}?
Each of the sets $\PrePer(f_t, K)$ 
contains the image of $\PrePer(f, K(T))$ 
under the specialization map $\P^1(K(T)) \to \P^1(K)$.
Using Baker's Northcott-type finiteness theorem \cite[Theorem 1.6]{Baker2009}
(plus a direct argument to handle the isotrivial case)
we show that the set of parameters $t$
for which this specialization map is injective on $\PrePer(f, K(T))$ is  
cofinite (see Lemma \ref{lem:inj_locus}).
This immediately gives 
\[
    \#{\PrePer}(f_t, K) \ge \#{\PrePer}(f, K(T))
\]
for all but finitely many $t$
    in $K$.
However, it is not \emph{a priori} clear that equality should ever hold.
Our first main result shows it almost always does.

\medskip 
\noindent \textbf{Theorem \ref{thm:little_o}.} 
\textit{Let $f_t \in K(z)$ be a one-parameter family of rational maps 
over a number field $K$.
Let $E \subseteq K$ be the set of parameters $t$ for which the specialization map
\[\PrePer(f, K(T)) \to \PrePer(f_t, K)\] 
is \emph{not} a bijection.
Then the proportion of parameters in $E$ up to height $X$ is vanishingly small as $X \to \infty$, i.e.,
\[
    \Num(E, X) = o(\Num(K, X)). 
\]}

To prove Theorem \ref{thm:little_o}, 
we use Hilbert's irreducibility theorem 
to relate membership in $E$ to the existence of large cycles (see Lemma \ref{lem:surj_locus}).
Then, inspired by the computations of Hutz--Ingram in \cite{HutzIngram}, we use a local argument (the ``$\ell = mrp^e$'' theorem of Zieve \textit{et al}.~\cite[p.~66\textit{ff}]{Silverman}) to show that if $f_t$ has a point of period $\ell$, then $t$ must lie in one of a fixed number of residue classes (Lemma \ref{lem:badred}) modulo a prime on the order of $\sqrt{\ell}$ (Proposition \ref{propn:n=mrpe}).\footnote{A classic result of Call--Silverman \cite[Theorem 4.1]{CallSilverman} relates the canonical heights of $f$ and $f_t$:
\[\hat{h}_{f_t}(P_t) \sim \hat{h}_f(P) h(t) \quad (h(t) \to \infty).\]
This implies that if $P$ is \emph{not} preperiodic, then neither is $P_t$, at least for all but finitely many $t$. Alas, that finite set depends on $P$, suggesting that this line of reasoning cannot be used to prove Theorem \ref{thm:little_o}.}

Subtracting the putative main term $\#{\PrePer}(f, K(T)) \cdot \Num(U(K), X)$ from $\mathcal{A}(X)$
yields the remainder term 
\[
	\mathcal{R}(X) = \!\! \sum_{\substack{t \in E \\ H(t) \le X}} \!\! 
	\big(\#{\PrePer}(f_t, K) - \#{\PrePer}(f, K(T))\big)
\]
which is naturally supported on the exceptional set $E$.
Thus, 
Theorem \ref{thm:little_o} suggests that the average number of preperiodic points 
should equal the generic number of preperiodic points:
\begin{equation} \label{eq:this1}
	\lim_{X \to \infty} \frac{\mathcal{A}(X)}{\Num(K, X)} = \#{\PrePer}(f, K(T)).
\end{equation}
Although we believe it should be possible to prove \eqref{eq:this1} in general,
we were unable to do so without assuming a bit more about the exceptional set $E$.
The issue is that while exceptional portraits may be rare, they may be exceptionally large.
The most we could muster is our second main result.

\medskip
\noindent\textbf{Theorem \ref{thm:avg}.}
\textit{Let $f_t$ be a one-parameter family of rational maps over a number field $K$ 
with domain of definition $U$ and exceptional set $E$, 
and let $m$ be a positive integer.
Then
\begin{align*}
    \sum_{\substack{t \in U(K) \\ H(t) \le X}} \!\!  \#{\PrePer}&(f_t, K)^m \\[-1.2em]
    &= \#{\PrePer}(f, K(T))^m \cdot \Num(U(K), X) + O\big(\Num(E, X) X^{C/\log\log X}\big)
\end{align*}
as $X \to \infty$, where the constants depend on $f$, $n$, and $m$.
If $f$ is a polynomial, the error term may be improved to $O\big(\Num(E, X) (\log X)^m\big)$.}
\smallskip 

The error term in Theorem \ref{thm:avg} comes from estimating the remainder by the $(1,\infty)$-H\"older inequality,
and relies on Troncoso's exponential bound \cite[Corollary 1.3(c)]{Troncoso}:
\begin{equation} \label{eq:i1}
\#{\PrePer}(f_t, K) < 6 \cdot 2^{16d^3 s}
\end{equation}
where $s = s(f_t)$ is the number of bad places of $f_t$ (see Proposition \ref{propn:benetronc}).
Then, using the prime number theorem and several intermediate inequalities (Lemmas \ref{lem:PNT}, \ref{lem:Discheight}, and \ref{lem:Hft_to_Ht}) we show that 
\begin{equation} \label{eq:i2}
s(f_t) \lesssim 2dd'n \log X/\log \log X
\end{equation} 
in the range $H(t) \le X$ as $X \to \infty$, where $d'$ is the degree of $f$ in $t$.
Combining \eqref{eq:i1} and \eqref{eq:i2} and noting that $A^{\log B} = B^{\log A}$ for all $A, B > 0$ yields Theorem \ref{thm:avg}
with any $C > 32d^4 d' nm \log 2$.
We remark that
\[(\log X)^N \ll_N X^{C/\log\log X} \ll_\varepsilon X^\varepsilon\]
for all $N, \varepsilon > 0$.

Numerical evidence suggests that $\Num(E, X)$ grows considerably more slowly than $o(\Num(K, X))$.
This is consistent with the Uniform Boundedness Conjecture, 
which implies that 
\[\Num(E, X) \ll \Num(K, X)^{1/2}\] 
for every one-parameter family
(see Corollary \ref{cor:exceptional_count}).
A classic result of Walde and Russo \cite[Corollary 4]{WaldeRusso}
says that if $f_t(z) = z^2 + t$ has a rational preperiodic point
then the denominator of $t$ is a perfect square. 
As already noted by Sadek \cite[Theorem 4.8]{Sadek2018},
this immediately entails
\[\Num(E, X) \ll \Num(\Q, X)^{3/4}\] 
for this family.
In light of these remarks,
we propose the following new conjecture.

\medskip 
\noindent\textbf{Strong Zero-Density Conjecture.}
\textit{For each one-parameter family of rational maps over a number field $K$, 
there exists a constant $\delta > 0$ 
such that 
\[
    \Num(E, X) 
    \ll \Num(K, X)^{1-\delta}
\]}

Our third main result represents partial progress toward this conjecture.

\medskip
\noindent\textbf{Theorem \ref{thm:den_gives_conj}.}
\textit{Suppose there exists a finite set $S$ of places $v$ of $K$ 
such that $v(t) \ne -1$ for all $v \not \in S$ 
whenever $f_t$ has a nontrivial preperiodic point over $K$.
Then 
\[
	\Num(E, X) \ll \Num(K, X)^{3/4+\varepsilon}.
\]
If $K$ is $\Q$ or an imaginary quadratic field, then we may take $\varepsilon = 0$.}
\smallskip

By generalizing Walde and Russo's argument for quadratic polynomials,
we exhibit (in Section \ref{sec:examples}) several classes of one-parameter families of rational maps satisfying the technical hypothesis in Theorem \ref{thm:den_gives_conj}
(which we call a ``denominator lemma'' for the family).
If a member $f_t$ of any such family has a nontrivial preperiodic point, 
then the denominator ideal of the parameter $t$ 
must be of the form $\mathfrak{a}\mathfrak{b}^2\mathfrak{c}^3$, 
where $\mathfrak{a}$ is one of finitely many ideals (determined by $S$), $\mathfrak{b}$ is arbitrary,
and $\mathfrak{c}$ is squarefree.
Theorem \ref{thm:den_gives_conj} then follows from counting ideals of this form (see Lemma \ref{lem:LangWeberGolomb}).

In particular, combining Theorems \ref{thm:avg} and \ref{thm:den_gives_conj}
gives the first known examples of families of rational maps
over number fields 
satisfying a strong form of the ``Averaged'' Uniform Boundedness Conjecture with an explicit error term (see below).
But this approach is certainly limited: at present, 
we cannot say anything beyond Theorem \ref{thm:little_o} for families \emph{without} a denominator lemma---even one as simple as $f_t(z) = z^2 + t^2$.

The paper concludes with an in-depth discussion of the quadratic family $f_t(z) = z^2 + t$ over $K = \Q$.
Our final result gives new estimates on the remainder term $\mathcal{R}(X)$ in terms of $\Num(\Q, X)^{1/2} \asymp X$.

\medskip
\noindent\textbf{Theorem \ref{thm:HR}.}
\textit{For the quadratic family $f_t(z) = z^2 + t$ over $\Q$, 
\[
C X + O(\sqrt{X} \log X) \le \mathcal{R}(X) \ll X^{3/2} \log \log X \log \log \log X
\]
where
\[C = \frac{12}{\pi^2}\left(\frac{2\pi}{3 \sqrt{3}} + \frac{1 + \sqrt{5}}{2} + 2 \log \frac{1 + \sqrt{5}}{2}\right) \approx 4.607\ldots\]
If no $f_t$ has a cycle of length exceeding 3, then the lower bound is an equality.}
\smallskip

The upper bound in Theorem \ref{thm:HR} follows from results of Walde--Russo \cite[Corollary 4]{WaldeRusso}, Benedetto \cite[Theorem 7.1]{Benedetto}, and Hardy--Ramanujan \cite[Lemma B]{HardyRamanujan},
and improves on the ``trivial'' upper bound $\mathcal{R}(X) \ll X^{3/2} \log X$ obtained by combining Theorems \ref{thm:avg} and \ref{thm:den_gives_conj}.
The lower bound uses Poonen's classification theorem \cite[Figure 1]{Poonen}
for preperiodic point portraits of quadratic polynomials over $\Q$, 
followed by standard techniques from analytic number theory.
The peculiar asymptotic constant,
reflecting the contribution of extra points of period 1 or 2,
arises as the sum of the areas enclosed by 
certain truncated conic sections (see Remark \ref{rmk:geometry}).

As far as we are aware, 
there is only one other result in the literature 
pertaining to the distribution of preperiodic points 
in families of rational functions.
In the recent paper \cite{LeBoudecMavraki},
Le Boudec and Mavraki 
used geometry of numbers techniques 
along with uniform estimates on the canonical height
to show that, 
on average, 
the point at $\infty$ 
is the only rational preperiodic point 
in the multi-parameter family $\Poly_d^* \subset \A^{d-1}$ of 
depressed degree-$d$ polynomials with unit constant term,
\[
\psi(z) = a_d z^d + a_{d-2} z^{d-2} + \ldots + a_1 z + 1 \qquad (a_d \ne 0).
\]
Specifically, they proved \cite[Theorem 1.1]{LeBoudecMavraki} that 
for each $d \ge 2$ and $\varepsilon > 0$,
\[
    \frac{1}{\Num(\Poly_d^*(\Q), X)} 
    \sum_{\substack{\psi \in \Poly_d^*(\Q) \\ H(\psi) \le X}}
    \#{\PrePer}(\psi, \Q) 
    = 1 + O\Big(\frac{1}{X^{\vartheta_d - \varepsilon}}\Big)
\]
where $\vartheta_2 = 1/2$ and $\vartheta_d = 2(d+1)/(5d+1) \in (2/5, 1/2]$ for $d \ge 3$.
Moreover, they conjectured \cite[(1.3)]{LeBoudecMavraki} that the true error term is \[\frac{\gamma_d}{X} + o\Big(\frac{1}{X}\Big)\]
for some constant $\gamma_d > 0$.
Our work overlaps with theirs just in the case $d = 2$, 
where Theorem \ref{thm:HR} improves the error term and suggests a value for $\gamma_2$ (see Remark \ref{rmk:LBM}).

As alluded to above, 
these results may be seen as 
special cases of an ``Averaged'' version of the Uniform Boundedness Conjecture, which we now formulate.
For any algebraic variety $V$,
any field $K$,
and any integer $n$,
write 
\[
V(\bar{K}_n)
= \{P \in V(\bar K) : [K(P) : K] \le n\}
= \bigcup_{[L : K] \le n} V(L) 
\]
for the set of all 
points $P$ of $V(\bar K)$ 
whose field of definition $K(P)$ has degree at most $n$ over $K$.
Let $H : \End^N_d(\Bar \Q) \to \R_{\ge 1}$ be a multiplicative Weil height on the space of degree-$d$ endomorphisms $\phi$ of $\P^N$.

\medskip 
\noindent \textbf{Averaged Uniform Boundedness Conjecture.}
\textit{For each $n, N \ge 1$ and $d \ge 2$, 
\begin{equation} 
\label{eq:avgUBC}
\limsup_{X \to \infty} 
\frac{1}{\Num(\End^N_d(\bar{\Q}_n), X)}
\sum_{\substack{\phi \in \End^N_d(\bar{\Q}_n) \\ H(\phi) \le X}} 
\! \!
\#{\PrePer}(\phi, \Q(\phi)) < \infty.
\end{equation}
That is, the average number of preperiodic points
of degree-$d$ endomorphisms of $\P^N$ defined over number fields of degree at most $n$ is finite.}
\medskip

Certainly, the UBC implies the Averaged UBC, 
as the limit superior is bounded by the Morton--Silverman constant
$C(d, N, n)$. 
Nothing else is known.
Even the number of terms, 
asymptotically equal to $\Num(\P^\nu(\bar\Q_n), X)$ where $\nu = \dim \End^N_d$, is not well understood for general $n$ and $\nu$, 
presumably due to the lack of a field structure on $\bar{\Q}_n$
(see, e.g.,~\cite{GrizzardGunther} and \cite{MasserVaaler} for the state of the art).
The present article addresses the mean value problem \eqref{eq:avgUBC}
with the space $\End^N_d$ replaced by a smaller family $U \subset \P^1$ 
and the set $\bar{\Q}_n$ replaced by a number field $K$.

A related question which the present article does not address but which is nonetheless very interesting concerns the issue of overcounting due to linear conjugacy. 
Let $f : U \to \End^N_d$ be any algebraic family defined over a number field $K$. 
Define an equivalence relation $\sim$ on $U(K)$ by $t \sim t'$ if and only if $f_t$ and $f_{t'}$ are $K$-conjugate,\footnote{i.e.,~there exists $\varphi \in \PGL_{N+1}(K)$ such that $f_t^\varphi := \varphi^{-1} \circ f \circ \varphi = f_{t'}$}
so that 
\[t \sim t' \implies \#{\PrePer}(f_t, K) = \#{\PrePer}(f_{t'}, K).\]
Let $H : U \to \R_{\ge 1}$ be a multiplicative Weil height,
and define
\[
\Tilde{H}([t]) := \inf_{t' \sim t} H(t')
\]
on equivalence classes $[t] \in U(K)/{\sim}$.
Then $\Tilde{H}$ inherits the Northcott property from $H$,
so it makes sense to ask:

\begin{q}
What is the behaviour of
\[\widetilde{\mathcal{A}}(X) := \sum_{\substack{[t] \in U(K)/{\sim} \\ \widetilde{H}([t]) \le X}} \#{\PrePer}(f_t, K)\]
as $X \to \infty$?
\end{q}

We expect $\Tilde{A}(X)$ to be asymptotic to $\Tilde\Num(X) \cdot \#{\PrePer}(f, K(U))$, 
where $\Tilde\Num(X)$ is the number of terms (assuming $U$ is irreducible).
Note that if the family has any symmetries---automorphisms $\sigma$ of $U$ such that $f_{\sigma(t)} = f_t$ for all $t$ in $\Bar K$---then $t \sim \sigma(t)$ trivially. 
If this be the only source of overcounting, 
then Hilbert's irreducibility theorem suggests
$\Tilde{\mathcal{A}}(X) \sim \frac{1}{g} \cdot \mathcal{A}(X)$ where $g$ is the number of symmetries $\sigma$ defined over $K$.
Certainly when $U \subset \P^1$ and $N = 1$ the analogue of Theorem \ref{thm:avg} holds, by the same proof.

Our belief is that nontrivial overcounting occurs just when the induced map to moduli space $U \to \mathsf{M}^N_d$ is not finite.
Of course, this map is constant if the family is isotrivial 
(the converse holds by Petsche--Szpiro--Tepper \cite[Theorem 1]{PST} assuming $\dim U = 1$).
In this case, $\#{\PrePer}(f_t, K)$ is uniformly bounded (see Remark \ref{rmk:iso}); and so ``average value'' = ``generic value'' 
provided that $\Tilde\Num(X)$ does not grow too slowly compared to $\Num(E/{\sim}, X)$ (which should satisfy the analogue of Theorem \ref{thm:little_o}).

For instance, if $f_t(z) = z + t/z$ where $U = \A^1 \setminus \{0\}$
then $t \sim t'$ if and only if $t/t' \in (K^\times)^2$
\cite[Example 4.69]{Silverman}.
When $K = \Q$, the possible preperiodic point portraits of this (isotrivial) family have been completely classified by Levy--Manes--Thompson \cite[Corollary 4.4]{LevyManesThompson};
from their work, it follows that 
\[\Tilde{\mathcal{A}}(X) = 2 \cdot \Tilde\Num(X) + 6 \qquad (X \ge 2).\]
As pointed out to the author by Koukoulopoulos, $\Tilde\Num(X)$ is twice the number of squarefree integers which may be written as a product of two positive integers $\le X$; thus, by Ford's solution of the multiplication table problem \cite{Ford}, 
\[
\widetilde{\mathcal{N}}(X) 
\asymp \frac{X^2}{(\log X)^\delta (\log \log X)^{3/2}}
\]
where $\delta = 1 - (1 + \log \log 2)/\log 2$.
It seems quite challenging to estimate $\Tilde\Num(X)$ in general.

The paper is laid out as follows.
Section \ref{sec:background} sets down notational conventions and recalls standard results.
Section \ref{sec:spec} carefully defines one-parameter families
and proves several fundamental Lemmas about the objects associated to them (\textit{viz.}~the domain of definition, the resultant polynomial, the injectivity and surjectivity loci, the exceptional set, and the sets $Z_l$).
These are used in Section \ref{sec:little_o} to prove that almost all specializations are portrait-preserving (Theorem \ref{thm:little_o}) 
as well as to discuss potential improvements of said result (Proposition \ref{propn:equivalences1}).
Section \ref{sec:avg}, concerning moments of portrait size, contains the proof of Theorem \ref{thm:avg} and gives some criteria for ``average value = generic value'' (Proposition \ref{propn:equivalences2}).
Section \ref{sec:denominator} proves that families with a denominator lemma satisfy the Strong Zero-Density Conjecture (Theorem \ref{thm:den_gives_conj}),
and Section \ref{sec:examples} exhibits three classes of Examples of such families using the method of ``tropicalization''.
Section \ref{sec:quadratic} is devoted to the quadratic family, culminating in Theorem \ref{thm:gold}.

\section*{Acknowledgements}

This paper constitutes part of my Ph.D.~thesis \cite[Chapter 2]{thesis}. 
I would like to thank my advisor, Patrick Ingram, for his helpful comments throughout the research and writing process.
I am also grateful to 
Matt Baker and Rob Benedetto for answering my questions about isotriviality and good reduction,
Vesselin Dimitrov for teaching me about thin sets, 
Sacha Mangerel for computing the mean value of $\omega(n) \log \omega(n)$, and 
Shuyang Shen for assisting me with numerical experiments.
The generous feedback from the anonymous referees greatly improved the quality of this article; I am particularly indebted to the reviewer who spotted a subtle gap in Lemma \ref{lem:inj_locus}, and to Jonathan Love who helped me fix it.
Finally, I wish to thank the University of Toronto for its financial support during my final year.

\section{Background} \label{sec:background} 

Throughout this paper, 
$L$ denotes an arbitrary field 
(of arbitrary characteristic)
and $K$ denotes a number field (i.e.,~a finite extension of $\Q$).
Once and for all we let
\[
\boxed{n = [K : \Q]}
\]
be the degree of $K$ over $\Q$.

\subsection{Asymptotics}
Let $f, g : (a, \infty) \to \R$ with $g$ eventually positive, 
and let 
\[C = \limsup_{x \to \infty} \frac{f(x)}{g(x)}.\]
We write
$f = O(g)$ (or $f \ll g$) iff $C < \infty$, 
$f \lesssim g$ iff $C \le 1$, and $f = o(g)$ iff $C = 0$.
If $f$ is also eventually positive, 
we write $f \gg g$ (resp.~$f \gtrsim g$) iff $g \ll f$ (resp.~$g \lesssim f$),
and $f \asymp g$ (resp.~$f \sim g$) iff $f \ll g \ll f$ (resp.~$f \lesssim g \lesssim f$).

By a completely standard abuse of notation, 
we occasionally use $O(g)$ (resp.~$o(g)$) 
to denote an unspecified function which is big-$O$ (resp.~little-$o$) of $g$.

A convenient definition is the following.
Let $h : (a, \infty) \to (0, \infty)$ be nondecreasing.
We say $h$ is \emph{nice} 
	(resp.~\emph{very nice}) 
if $h(cx) \ll h(x)$ 
	(resp.~$h(cx) \lesssim c h(x)$) 
for each $c > 0$.
If $f(x) \to \infty$ and $f \ll g$, 
then $h \circ f \ll h \circ g$ for any nice $h$. 
The same holds for very nice $h$ with $\ll$ replaced by $\lesssim$.
For example, 
if $\alpha > 0$ then $x^\alpha$ is nice; 
and the function $x \log x$ is very nice.

\subsection{Valuations}
A \emph{valuation} on a field $L$ is a surjective function $v : L \twoheadrightarrow \Z \cup \{\infty\}$
such that
\begin{enumerate}
\item	$v(x) = \infty$ iff $x = 0$
\item	$v(xy) = v(x) + v(y)$
\item	$v(x + y) \ge \min\{v(x), v(y)\}$
\end{enumerate}
for all $x, y \in L$. 
(We only consider normalized discrete valuations.)
Every valuation on a number field $K$ 
is a \emph{$\mathfrak{p}$-adic valuation}
\[
	\ord_{\mathfrak{p}}(x) = \text{exponent of $\mathfrak{p}$ in the factorization of $xO_K$}
\]
for some prime ideal $\mathfrak{p}$. 
Similarly, 
every valuation on a function field $L(T)$ that is trivial on $L$ 
is either the $\pi$-adic valuation $\ord_\pi$ for some irreducible polynomial $\pi$, 
or else the \emph{degree} valuation $\ord_\infty$, given by $-{\deg}$ on polynomials.

\subsection{Absolute values}

An \emph{absolute value} on $L$ is a function $| \cdot | : L \to \R_{\ge 0}$ such that 
\begin{enumerate}
\item	$|x| = 0$ iff $x = 0$
\item	$|xy| = |x| \cdot |y|$
\item	$|x + y| \le |x| + |y|$
\end{enumerate}
for all $x, y \in L$.
If (3) can be replaced by 
\begin{itemize}
\item[(3')] $|x + y| \le \max\{|x|, |y|\}$ 
\end{itemize}
then we say $| \cdot |$ is \emph{nonarchimedean}.
Two absolute values $| \cdot |$ and $| \cdot |'$ are \emph{equivalent} 
iff $| \cdot |' = | \cdot |^\alpha$ for some positive real number $\alpha$;
an equivalence class is called a \emph{place}.
Every valuation $v$ on $L$ defines a nonarchimedean place of $L$ 
by setting $|x|_v = q_v^{-v(x)}$ for any fixed $q_v > 1$.

The \emph{standard absolute values} on $\Q$ are 
the usual absolute value $|x|_\infty = |x|$
and the $p$-adic absolute values $|x|_p = p^{-v_p(x)}$.
For a number field $K$, 
we let $M_K$ denote the set of absolute values of $K$ extending any standard absolute value on $\Q$.
By Ostrowski's theorem, 
every $v$ in $M_K$ is either the archimedean absolute value 
\[
	\lvert x \rvert_\sigma = \lvert \sigma(x) \rvert 
\]
for some embedding $\sigma : K \into \C$, with local degree 
\[
n_v = [K_v : \R] = \begin{dcases*} 1 & if $\sigma$ is real, \\ 2 & if $\sigma$ is complex; \end{dcases*}
\]
or else the nonarchimedean absolute value 
\[
	\lvert x \rvert_\mathfrak{p} 
 = p^{-{\ord_\mathfrak{p}(x)}/e_\mathfrak{p}}
\]
for some prime ideal $\mathfrak{p} \mid p$, where $e_\mathfrak{p} = \ord_\mathfrak{p}(p)$ is the ramification index; the local degree is
\[
n_v = [K_v : \Q_p] = e_\mathfrak{p} f_\mathfrak{p}
\]
where $f_\mathfrak{p} = \dim_{\F_p} O_K/\mathfrak{p}$ is the inertia degree.

\subsection{Height} \label{sec:def_heights}
Let $|\cdot|$ be an absolute value on $L$.
We extend $|\cdot|$ to lists of elements of $L$ 
by setting 
\[|a_1, \ldots, a_k| = \max\{|a_1|, \ldots, |a_k|\}\]
and similarly to lists of polynomials over $L$
(viewed as lists of coefficients).

The \emph{(absolute multiplicative) height} 
of a point $P = (a : b) \in \P^1(K)$ 
defined over a number field $K$ of degree $n$
is the product
\[H(P) = \prod_{v \in M_K} |a, b|_v^{n_v/n} \in [1, \infty)\]
over all standard absolute values of $K$.
For any subset $A \subseteq \P^1(K)$ 
and any real number $X \in \R$
the \emph{counting function} of $A$ is 
\[
    \Num(A, X) = \#\{P \in A : H(P) \le X\}.
\]
For us, 
the most important counting result is Schanuel's theorem, 
which says that
\begin{equation}\label{eq:schanuel}
    \Num(\P^1(K), X) = c_K X^{2n} + O\big(X^{2n-1} \log X\big)
\end{equation}
where $c_K$ is a constant involving all the classical invariants of $K$, 
and the $\log X$ disappears if $n > 1$ \cite[Corollary]{Schanuel}.
See the paper \cite{GrizzardGunther} for many other interesting counting results.

\subsection{Ideal norm}
Let $\mathcal{I}_K$ denote the set of nonzero integral ideals of $O_K$.
The \emph{(absolute) norm} of an ideal $I \in \mathcal{I}_K$ 
is the positive integer
\[
    \Nm I = [O_K : I] = \#(O_K/I).
\]
For each $\mathcal{B} \subseteq \mathcal{I}_K$ 
the \emph{counting function} of $\mathcal{B}$ 
is
\[
    \Num(\mathcal{B}, X) = \#\{I \in \mathcal{B} : \Nm I \le X\}.
\]
We will need to know the number of integral ideals up to a given norm $X$.
This is provided by Weber's theorem, according to which
\begin{equation} \label{eq:weber}
	\Num(\mathcal{I}_K, X) = \rho_K X + O(X^{1-1/n})
\end{equation}
where $\rho_K$ is another constant depending on $K$ (see, e.g.,~\cite[Theorem 5]{MurtyVanOrder}).

\subsection{Thin sets}

A subset $\Omega$ of $\A^1(K)$ is called \emph{thin} 
in the sense of Serre \cite[Section 9.1]{Serre} 
if there exists an algebraic variety $X$ over $K$ and a morphism $\pi : X \to \A^1$ such that 
\begin{enumerate}[(a)]
\item $\Omega \subset \pi(X(K))$, 
\item the generic fibre of $\pi$ is finite, and
\item $\pi$ has no rational section over $K$.
\end{enumerate}
It is immediate that the class of thin sets 
contains $\varnothing$
and is closed under 
    taking finite unions 
    and passing to arbitrary subsets---i.e.,~it is an \emph{ideal of sets}.

We may now state 
a version of Hilbert's irreducibility theorem 
most convenient for us:
If $\Phi \in K[T][X, Y]$ 
is homogeneous in $X, Y$ 
and has no roots over $K(T)$, 
then the parameters $t \in K$ 
for which $\Phi_t$ acquires a root over $K$
constitute a thin set.
To see this,
let $X = \{(t, P) \in \A^1 \times \P^1 : \Phi_t(P) = 0\}$
and let $\pi : X \to \A^1$ be the projection.
Then 
\begin{enumerate}[(a)]
    \item $\pi(X(K)) 
    = \{t \in \A^1(K) : \Phi_t(P) = 0 \text{ for some } P \in \P^1(K)\}$,
    \item generic fibre of $\pi$ has cardinality $\deg \Phi$, and
    \item the data of a rational section $\sigma : \A^1 \dashrightarrow X$ of $\pi$ defined over $K$ 
    consists of an open set $U \subseteq \A^1$ and a pair $a, b \in K[T]$ such that $\Phi_t(a(t), b(t)) = 0$ for all $t \in U(K)$; 
    but $\Phi$ has no roots over $K(T)$, 
    so $\pi$ has no rational section over $K$.
\end{enumerate}

The key feature of thin sets is that their counting functions are asymptotically negligible. 
Specifically, 
if $\Omega \subseteq \A^1(K)$ is thin, 
then we have
\begin{equation} \label{eq:thin}
    \Num(\Omega, X) \ll X^n
\end{equation}
where $n$ is the degree of the number field $K$
(see \cite[Section 9.7]{Serre}).

\subsection{Dynamics} \label{ss:dynamics}
Let $f : X \to X$ be a self-map of a set $X$. 
A point $x \in X$ is called 
\begin{itemize}
    \item \emph{periodic}
if $f^l(x) = x$ for some $l \ge 1$;
    \item \emph{preperiodic} 
if $f^t(x)$ is periodic for some $t \ge 0$.
\end{itemize}
The least $l$ (resp.~$t$) satisfying the above is called the \emph{period} (resp.~\emph{tail length}) of $x$.
The \emph{type} of a preperiodic point is the symbol $l_t$, where $t$ is the tail length of $x$ and $l$ is the period of $f^t(x)$.
A \emph{fixed point} is a point of period 1.
The sets of fixed, periodic, and preperiodic points of $f$
are denoted $\Fix(f)$, $\Per(f)$, and $\PrePer(f)$ respectively.
The \emph{backward orbit} 
of a subset $A \subseteq X$ 
is the set 
\[
    f^{-\infty}(A) = \{x \in X : f^k(x) \in A \text{ for some } k \ge 0\}.
\]

The dynamics of a self-map may be visualized 
in terms of the associated \emph{portrait},
defined as the functional digraph with vertex-set $X$ 
and an arrow from $x$ to $x'$ if and only if $x' = f(x)$.
Terms like ``cycle'' and ``branch'' refer to this visualization.

A \emph{homomorphism} of self-maps $f : X \to X$ to $g : Y \to Y$ 
is a homomorphism of the underlying graphs, 
i.e.,~a function $\varphi : X \to Y$ such that $\varphi(f(x)) = g(\varphi(x))$ for all $x \in X$. 
Homomorphisms send (pre)periodic points to (pre)periodic points; 
in fact, if $x$ has type $l_t$ then $\varphi(x)$ has type $l'_{t'}$ where $l' \mid l$ and $t' \le t$.

\subsection{Rational maps}

A \emph{rational function} with coefficients in $L$ is an element $\phi$ of $L(z)$,
say
\[\phi(z) = \frac{f(z)}{g(z)}\] 
with $f$ and $g$ coprime polynomials.
The \emph{degree} of $\phi$,
denoted $\deg \phi$, 
is the maximum of the degrees of $f$ and $g$.
If $\deg \phi = d$ then by homogenizing $f$ and $g$ to a pair of degree-$d$ 
binary forms $F$ and $G$ 
we obtain a rational map $\P^1 \dashrightarrow \P^1$, 
also denoted $\phi$, 
such that 
\[\phi(a : b) = (F(a, b) : G(a, b))\]
for all $(a : b) \in \P^1$.
Any such pair $F, G$ is called a \emph{lift} of $\phi$; 
any two lifts are congruent modulo $L^\times$.
Since $f$ and $g$ are coprime, 
every rational map obtained in this way 
is automatically a morphism
(i.e.,~defined everywhere).
Conversely, every endomorphism of $\P^1$ 
dehomogenizes to an element of $L(z) \cup \{\infty\}$.

A rational map $\phi$ is constant if and only if $\deg \phi = 0$.
Rational maps of degree $1$ are precisely the automorphisms of $\P^1$,
also known as M\"obius maps:
\[\phi(z) = \frac{az + b}{cz + d} \qquad (ad - bc \ne 0).\]

The set of $L$-rational preperiodic points of $\phi$ (including, possibly, $\infty$) is written
\[
    \PrePer(\phi, L) = \PrePer(\phi) \cap \P^1(L).
\]

The projectivization of the vector space over $L$ 
comprising all pairs of degree-$d$ binary forms with coefficients in $L$
is denoted $\bar{\Rat}_d(L)$.  
Its dimension is $2d + 1$.
The space 
\[\Rat_d \subseteq \bar{\Rat}_d\] 
of rational maps of degree (exactly) $d$
is the complement of the resultant locus,
which we define next.

\subsection{Resultants}
Two degree-$d$ binary forms 
\begin{align*}
F(X, Y) &= F_d X^d + \ldots + F_0 Y^d, \ G(X, Y) = G_d X^d + \ldots + G_0 Y^d \in L[X, Y]
\end{align*}
have no common zeroes over $\bar{L}$ 
if and only if 
their \emph{resultant} 
\[
	\Res(F, G) = \det \begin{pmatrix}
	F_d     &        &         & G_d     &        &         \\
	\vdots  & \ddots &         & \vdots  & \ddots &      \\
	F_0     &        & F_d     & G_0     &        & G_d \\
	        & \ddots & \vdots  &         & \ddots & \vdots  \\
	        &        & F_0     &         &        & G_0     
\end{pmatrix} \in L
\]
is nonzero. 
The $(2d)$-by-$(2d)$ matrix on the right 
will be called the \emph{Sylvester matrix}. 
Beware that this is not the same as the Sylvester matrix of the dehomogenizations of $F$ and $G$ unless those have equal degree.

The resultant is homogeneous of degree $d$ in each argument 
and commutes with ring homomorphisms out of $L$. 
It also satisfies the following \emph{composition law} \cite[Exercise 2.12(a)]{Silverman}: 
if $\deg F = \deg G = D$ and $\deg f = \deg g = d$
then 
\begin{equation} \label{eq:res_comp}
\Res(F(f, g), G(f, g)) = \Res(F, G)^d \Res(f, g)^{D^2}.
\end{equation}

\subsection{Good reduction}

Let $v$ be a valuation on $L$ and let $F, G \in L[X, Y]$ be binary forms of degree $d$.
For all $c$ in $L$, 
\[v(\Res(cF, cG)) = 2d \, v(c) + v(\Res(F, G))
\quad\text{and}\quad 
v(cF, cG) = v(c) + v(F, G).\]
Thus, the function
\[(F, G) \mapsto v(\Res(F, G)) - 2d \, v(F, G)\]
descends to a well-defined non-negative map on $\bar{\Rat}_d(L)$, 
denoted $v(\Res\phi)$, 
which is finite if and only if $\phi \in \Rat_d(L)$.
We say $\phi$ has \emph{good reduction} at $v$ 
if and only if $v(\Res\phi) = 0$.

An equivalent definition of good reduction is the following.
Let $\ell_v = O_v/\mathfrak{m}_v$ be the residue field of $L$,
and let $F, G$ be a \emph{$v$-normalized} lift of $\phi$, 
meaning $v(F, G) = 0$.
The \emph{reduction} of $\phi$ at $v$ 
is the element $\phi_v$ of $\bar{\Rat}_d(\ell_v)$
defined by the lift
\[
  \tilde{F}(X, Y) = \tilde{F}_d X^d + \ldots + \tilde{F}_0 Y^d, 
\ \tilde{G}(X, Y) = \tilde{G}_d X^d + \ldots + \tilde{G}_0 Y^d \in \ell_v[X, Y]
\]
obtained by ``reducing coefficients modulo $v$''.
Now $\phi$ has good reduction at $v$ 
if and only if $\phi_v \in \Rat_d(\ell_v)$. 
The equivalence of these two definitions is standard; see \cite[Theorem 2.15 and Proposition 4.95(a)]{Silverman}.

Over a number field $K$,
the valuations of the resultant of a rational map $\phi \in \Rat_d(K)$
may be packaged into a single object,
the \emph{resultant ideal}: 
\[\Res \phi = \prod_{\mathfrak{p}} \mathfrak{p}^{\ord_{\mathfrak{p}}({\Res \phi})}.\]

\section{Specialization} \label{sec:spec}

\subsection{Domain of definition} \label{sec:domain_of_defn}

Informally, the specialization at a parameter $t$ in $L$ 
of an object defined over $L(T)$ 
is obtained by plugging in $t$ for $T$. 
One way to make this precise is to recognize specialization at $t$ 
as reduction modulo the place $v = \ord_{T - t}$ of $L(T)$.
Since the residue field of $v$ is isomorphic to $L[T]/(T - t) = L$, 
this entails the following:
\begin{itemize}
\item If $P \in \P^1(L(T))$, then $P_t = (a(t) : b(t))$ where $a, b \in L[T]$ satisfy $(a : b) = P$, are both defined at $t$, and do not both vanish at $t$.
\item If $f \in \Rat_d(L(T))$ is a rational map, 
then $f_t \in \bar{\Rat}_d(L)$ is the rational map given by $f_t = (F_t : G_t)$ where $F, G \in L(T)[X, Y]$ is any lift of $f$ such that $t$ is not a pole of any coefficient of $F$ nor of $G$ and at least one coefficient of $F$ or of $G$ is nonzero at $t$ (i.e.,~any $v$-normalized lift).
\end{itemize}
Thus, 
a rational map $f$ defined over $L(T)$ 
yields a one-parameter family of rational maps $f_t$ defined over $L$ 
(though not necessarily all of the same degree as $f$).

\begin{defn}
Let $f \in L(T)(z)$ be a rational map.
The \emph{domain of definition} (over $L$) of the family $f_t$ is the set 
\[
	U(L) = \{ t \in \A^1(L) : \deg f = \deg f_t \}.
\]
\end{defn}

Since $L[T]$ is a principal ideal domain, 
each point $P$ and each rational map $f$ 
admits a representative that is ``globally defined'' with respect to specialization---%
first by clearing denominators, 
then by cancelling common factors.
For points, this results in a pair of coprime polynomials $a, b \in L[T]$.
For rational maps, this results in a pair of polynomials $F, G \in L[T][X, Y]$
whose coefficients are altogether coprime. 
In either case, 
we say that the pair $a, b$ (resp.~$F, G$) is an \emph{integral lift} of $P$ (resp.~$f$).

\begin{propn} \label{propn:specialization}
Let $F, G \in L[T][X, Y]$ be an integral lift of $f$.
For each $t$ in $L$, 
\[\deg f = \deg f_t 
\ \text{ if and only if } \ 
\Res(F, G)(t) \ne 0.\]
In particular, the domain of definition is a Zariski-open subset of $\A^1$: 
\[U = V(\Res(F, G))^c.\]
\end{propn}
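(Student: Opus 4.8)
The plan is to track what happens to the resultant of a lift under specialization at $t$, using the fact that $F,G$ are an \emph{integral} lift, i.e.\ the coefficients of $F$ and $G$ together are coprime in $L[T]$. First I would observe that, since the resultant commutes with ring homomorphisms out of $L[T]$ and in particular with the evaluation map $T \mapsto t$, we have $\Res(F,G)(t) = \Res(F_t, G_t)$, where $F_t, G_t \in L[X,Y]$ are obtained by evaluating coefficients at $t$. The subtlety is that $F_t, G_t$ may fail to be a \emph{degree-$d$} lift: evaluation at $t$ could kill the leading coefficients, dropping the degree. So the argument splits according to whether $(F_t, G_t)$ is a genuine degree-$d$ pair.

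The key step is to handle the case $\deg f_t < d$. Here I would argue that $(F_t, G_t)$ is not a $v$-normalized lift in the strict sense needed to read off $f_t$; rather, writing $v = \ord_{T-t}$, the pair $(F,G)$ \emph{is} $v$-normalized (this is exactly integrality: the coefficients don't all vanish at $t$, since they're coprime), so $f_t$ is defined by the reduction $(\tilde F, \tilde G) = (F_t, G_t)$ in $\bar{\Rat}_d(L)$. Now $\deg f_t = d$ precisely when $(F_t, G_t)$ has no common root over $\bar L$, which by the definition of the resultant is equivalent to $\Res(F_t, G_t) \ne 0$. And $\Res(F_t, G_t) = \Res(F,G)(t)$ by the previous paragraph. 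The degenerate phenomena — degree drop, common factor appearing — are all subsumed by the single statement that the resultant of the reduced pair vanishes. One should note that if the leading coefficients of $F$ and $G$ both vanish at $t$, then $(a : b) = (1 : 0)$ is a common zero of $F_t, G_t$, so indeed $\Res(F_t,G_t) = 0$ and $\deg f_t < d$, consistent with the claim; the Sylvester-determinant formula for $\Res$ continues to make sense (and to compute this common factor) even when the forms degenerate, which is why we must be slightly careful to invoke the ``common zero over $\bar L$'' characterization rather than a formula valid only in the non-degenerate range.

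From there the conclusion is immediate: $\deg f = \deg f_t$ iff $\Res(F_t, G_t) \ne 0$ iff $\Res(F,G)(t) \ne 0$, so
\[
    U(L) = \{ t \in \A^1(L) : \Res(F,G)(t) \ne 0 \} = V(\Res(F,G))^c(L),
\]
and since $\Res(F,G) \in L[T]$ is a single polynomial (nonzero, because $f$ has degree $d$ over $L(T)$, so that $\Res(F,G) \ne 0$ in $L[T]$), its non-vanishing locus is Zariski-open in $\A^1$. I would also remark that $\Res(F,G)$ is not identically zero: this is just the assumption $f \in \Rat_d(L(T))$, which says $\Res(F,G) \ne 0$ as an element of $L(T)$, hence as a polynomial in $T$.

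The main obstacle, such as it is, is purely bookkeeping: making sure the integral lift $F,G$ really is $v$-normalized for every $v = \ord_{T-t}$ simultaneously (this is what ``altogether coprime coefficients'' buys us, and it must be stated cleanly), and being careful that the resultant-vanishing criterion ``$\Res = 0 \iff$ common zero over $\bar L$'' is the one being used, since we are precisely in the regime where the forms may become degenerate. No genuine difficulty arises; the proposition is essentially the statement that good/bad reduction at the place $\ord_{T-t}$ of $L(T)$ is detected by the valuation of the resultant, specialized to this particular family of places.
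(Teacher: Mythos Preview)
Your proposal is correct and follows essentially the same approach as the paper: both arguments reduce to the observation that an integral lift is $v$-normalized at every place $v = \ord_{T-t}$, and then invoke the equivalence between good reduction (i.e.,~$\deg f_t = d$) and nonvanishing of the resultant at $v$. The paper compresses this into a one-line appeal to the two equivalent definitions of good reduction, whereas you unpack the mechanics (commutation of $\Res$ with evaluation, the degenerate-leading-coefficient case) explicitly; there is no substantive difference.
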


\begin{proof}
This follows immediately from the two equivalent definitions of good reduction applied to the place $v = \ord_{T - t}$ of $L(T)$, 
plus the observation that an integral lift is $v$-normalized at every place $v \ne \infty$ of $L(T)$.
\end{proof}

By analogy with the resultant ideal, 
and in light of Proposition \ref{propn:specialization},
we make the following definition.

\begin{defn}
The \emph{resultant polynomial} of the one-parameter family $f_t$ 
is the resultant $R_f := \Res(F, G) \in L[T]$ of any integral lift of $f$.
\end{defn}

Strictly speaking, $R_f$ is only defined modulo $(L^\times)^{2d}$ 
but this will never be an issue for us.

\begin{propn} \label{propn:deg_Rf}
Let $F, G \in L[T][X, Y]$ be an integral lift of $f$.
Let $d$ be the common degree (in $X$ and $Y\!$) of $F$ and $G$ [the \emph{degree} of $f$]
and let $d'$ be the maximum of the degrees (in $T\!$) of the coefficients of $F$ and $G$ [the \emph{co-degree} of $f$].
Then 
\[ \deg R_f \le 2dd'. \] 
Equality holds if and only if the specialization of $f$ at $t = \infty$ has degree $d$.
\end{propn}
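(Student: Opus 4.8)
The plan is to compute the $T$-degree of $R_f = \Res(F,G)$ by examining the $T$-degrees of the entries of the Sylvester matrix and then analyzing what happens when we pass to the specialization at $T = \infty$. Write $F = \sum_{i=0}^d F_i(T) X^{d-i} Y^i$ and $G = \sum_{i=0}^d G_i(T) X^{d-i} Y^i$ with $F_i, G_i \in L[T]$, so that by definition $d' = \max_i \{ \deg_T F_i, \deg_T G_i \}$. Each entry of the $(2d) \times (2d)$ Sylvester matrix is one of the $F_i$ or $G_i$ (or zero), hence has $T$-degree at most $d'$; since the determinant is a sum of products of $2d$ such entries, $\deg_T R_f \le 2d \cdot d' = 2dd'$. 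This gives the inequality immediately.

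For the equality condition, the idea is to identify the coefficient of $T^{2dd'}$ in $R_f$ with a resultant over $L$ attached to the specialization at infinity. First I would introduce $\bar F_i, \bar G_i$ to be the coefficient of $T^{d'}$ in $F_i, G_i$ respectively (so $\bar F_i = 0$ unless $\deg_T F_i = d'$, etc.), and form the binary forms $\bar F = \sum \bar F_i X^{d-i} Y^i$ and $\bar G = \sum \bar G_i X^{d-i} Y^i$ over $L$. The key observation is that substituting $T = 1/s$ and clearing the denominator $s^{d'}$ from each coefficient turns $F(1/s, X, Y)$ into $s^{-d'}(\bar F + s(\cdots))$, etc.; this is exactly the process of forming a $v$-normalized lift at the place $v = \ord_s$ of $L(s)$, which by the construction recalled before Proposition~\ref{propn:specialization} is the specialization $f_\infty$, defined by the lift $(\bar F : \bar G)$ together with its reduction mod $s$. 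Then the composition/homogeneity properties of the resultant — together with $v(\Res(cF, cG)) = 2d\,v(c) + v(\Res(F,G))$ from the Good reduction subsection applied to $c = s^{-d'}$ — show that the leading $T$-coefficient of $R_f$ equals $\Res(\bar F, \bar G)$ up to a unit.

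Finally I would invoke the two equivalent definitions of good reduction once more: $\Res(\bar F, \bar G) \ne 0$ precisely when $\bar F, \bar G$ have no common zero over $\bar L$, i.e.\ when the reduction of $(\bar F : \bar G)$ is a genuine element of $\Rat_d(L)$, i.e.\ when $\deg f_\infty = d$. Since $\deg_T R_f = 2dd'$ if and only if that leading coefficient is nonzero, this is exactly the claimed criterion.

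The main obstacle is bookkeeping the $T = \infty$ specialization cleanly: the place $\ord_\infty$ of $L(T)$ is precisely the one at which an integral lift fails to be automatically $v$-normalized (as already flagged in the proof of Proposition~\ref{propn:specialization}), so one must carefully pass to the coordinate $s = 1/T$, track the power of $s$ factored out of each coefficient, and confirm that the resulting forms $\bar F, \bar G$ really do present $f_\infty$. Everything else — the degree bound, the homogeneity of $\Res$, the translation to the good-reduction dictionary — is formal.
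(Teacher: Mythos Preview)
Your proposal is correct and lands on the same idea as the paper---work at the place $v = \ord_\infty$ and use the good-reduction criterion---but you split the argument into two halves where the paper does it in one stroke. The paper simply observes that $T^{-d'}F, T^{-d'}G$ is $v$-normalized at $\ord_\infty$, so $v(\Res f) = v\bigl(\Res(T^{-d'}F, T^{-d'}G)\bigr) = 2dd' - \deg R_f$ by homogeneity; the inequality is then $v(\Res f) \ge 0$ and the equality case is $v(\Res f) = 0$, i.e.\ good reduction at $\infty$, i.e.\ $\deg f_\infty = d$. Your Sylvester-matrix degree count is a perfectly valid alternative for the inequality (and in fact your observation that the top coefficient of $R_f$ is exactly $\Res(\bar F,\bar G)$ is a clean direct way to get the equality case too, without the $s = 1/T$ detour), but the paper's valuation-theoretic route avoids introducing $\bar F, \bar G$ explicitly and handles both parts at once.
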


\begin{proof}
Let $v = \ord_\infty$ be the place at $\infty$.
Then $v(T^{-d'} F, T^{-d'} G) = 0$, 
so 
\[ v\big({\Res}(T^{-d'} F, T^{-d'} G)\big) = v(\Res f) \ge 0 \] 
with equality if and only if $\deg f_\infty = d$.
By homogeneity of the resultant, 
\[ v\big({\Res}(T^{-d'} F, T^{-d'} G)\big) = v\big(T^{-2dd'} {\Res}(F, G)\big) = 2dd' - \deg R_f. \qedhere\]
\end{proof}

Proposition \ref{propn:deg_Rf} suggests that the domain of definition ought to be a subset of $\P^1$ instead of $\A^1$, and even gives a criterion for membership of $\infty$ therein depending solely on the resultant polynomial of the family; but as a matter of convenience we shall not take this stance.
Thus, $\infty$ is always excluded.

\begin{rmk} \label{rmk:codegree}
Any one-parameter family $f_t$
induces a $U$-morphism $U \times \P^1 \to U \times \P^1$,
namely $(t, P) \mapsto \big(t, f_t(P)\big)$.
It also induces a rational map 
$\A^1 \dashrightarrow \Rat_d$, 
given by $t \mapsto f_t$,
whose domain and degree 
are precisely 
the domain of definition $U$ and the co-degree $d'$.
These viewpoints are in fact equivalent; see \cite[Proposition 4.31]{Silverman}.
\end{rmk}

\begin{rmk} \label{rmk:composition}
Let $f_t, g_t$ be one-parameter families of rational maps over $L$ with domains of definition $U$ and $V$, respectively. 
Then by \cite[Theorem 2.18(b)]{Silverman}, 
the domain of definition of the composite family $f_t \circ g_t$ contains $U \cap V$, though it may be bigger.\footnote{For example, $f(z) = T^a z^d$ and $g(z) = z^e/T^b$ have domain of definition $\A^1 \setminus \{0\}$ assuming $a, b > 0$, yet $f(g(z)) = T^{a-bd} z^{de}$ has domain of definition $\A^1$ assuming $a = bd$.
In general, the composition of $v$-normalized lifts need not be $v$-normalized.}
The resultant polynomial 
of $f \circ g$ is given by the formula 
\begin{equation} \label{eq:resFoG}
R_{f \circ g}(T) = \frac{R_f(T)^e R_g(T)^{d^2}}{h(T)^{d + e}}
\end{equation}
where $h \in L[T]$ is the g.c.d.~of the coefficients of the composition $F_0(G_0, G_1)$, $F_1(G_0, G_1)$ of any two integral lifts $F_0, F_1$ and $G_0, G_1$ of $f$ and $g$.
Formula \eqref{eq:resFoG} may be derived from 
\eqref{eq:res_comp} and homogeneity of the resultant.
\end{rmk}

\begin{rmk} \label{rmk:iteration}
By a result of Benedetto \cite[Theorem B]{Benedetto2001},
which says that good reduction is a dynamical property,
$f$ and every iterate of $f$ have the same domain of definition.
\end{rmk}

\subsection{Special loci} 

Specialization at $t \in U$ induces a homomorphism of self-maps from $f$ to $f_t$,
in the sense that 
\[f(P)_t = f_t(P_t)\]
for all $P$ in $\P^1(L(T))$ \cite[Theorem 2.18(a)]{Silverman}.
By iteration, $(f^n(P))_t = f_t^n(P_t)$ for each $n \ge 0$.
In particular, if $P$ is preperiodic under $f$, 
then $P_t$ is preperiodic under $f_t$.
For brevity, write
\[
    \Gamma = \PrePer(f, L(T)) \quad\text{and}\quad \Gamma_t = \PrePer(f_t, L).
\]

\begin{defn}
The \emph{injectivity} (resp.~\emph{surjectivity}, \emph{bijectivity}) \emph{locus} of $f$ is the set of parameters $t$ in $U$ for which the specialization map $\Gamma \to \Gamma_t$ is injective (resp.~surjective, bijective).
The complement in $U$ of the bijectivity locus is called the \emph{exceptional set} and is denoted $E$.  
\end{defn}

Note that our notion of exceptional set is unrelated to the ``exceptional sets'' of complex dynamics (comprising points with finite grand orbit).

\begin{lem} \label{lem:inj_locus}
Let $f$ be a one-parameter family of rational maps over $L$ with domain of definition $U$.
Then the injectivity locus
\[
    U^\circ(L) = \{t \in U(L) : \text{the specialization map } \Gamma \to \Gamma_t \text{ is injective}\}
\]
is a cofinite subset of $U(L)$.
\end{lem}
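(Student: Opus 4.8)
The set $\Gamma = \PrePer(f, L(T))$ is finite by Northcott's theorem applied over the function field $L(T)$ (assuming $f$ is not isotrivial; the isotrivial case is treated separately below). Write $\Gamma = \{P^{(1)}, \ldots, P^{(m)}\}$ and fix integral lifts $(a_i, b_i) \in L[T]^2$ of each $P^{(i)}$. For distinct indices $i \ne j$, the ``wedge'' polynomial
\[
    W_{ij}(T) = (a_i, b_i) \wedge (a_j, b_j) = a_i b_j - a_j b_i \in L[T]
\]
is nonzero, because $P^{(i)} \ne P^{(j)}$ as points of $\P^1(L(T))$. Specialization at $t$ fails to be injective on $\Gamma$ precisely when some collision $P^{(i)}_t = P^{(j)}_t$ occurs, i.e.\ when $W_{ij}(t) = 0$ for some $i \ne j$. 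Hence the non-injectivity locus inside $U$ is contained in the finite set $V\big(\prod_{i<j} W_{ij}\big)$, so
\[
    U^\circ(L) \supseteq U(L) \setminus V\Big(\textstyle\prod_{i<j} W_{ij}\Big),
\]
which exhibits $U^\circ$ as Zariski open in $U$. (One must also check that $U^\circ$ is exactly this set, not just contained in it; but if none of the $W_{ij}$ vanish at $t$, then all the $P^{(i)}_t$ are genuinely distinct points of $\P^1(L)$, and since specialization at $t \in U$ is a homomorphism of self-maps sending preperiodic points to preperiodic points, the $m$ images lie in $\Gamma_t$ and the map is injective.)

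It remains to verify \emph{nonemptiness}, which is the real content. Since $U$ is a nonempty Zariski open subset of $\A^1$ and each $W_{ij}$ is a nonzero polynomial, the set $U(L) \setminus V(\prod W_{ij})$ is the complement of a proper closed subset of the irreducible curve $\A^1$. When $L$ is infinite this is immediately nonempty; but we are asked for a statement over an arbitrary field $L$, so I would argue geometrically: $U^\circ$ is a nonempty \emph{scheme-theoretic} open subset of $\A^1_L$ (the complement of the divisor cut out by $R_f \cdot \prod_{i<j} W_{ij}$, a nonzero polynomial), hence nonempty as a variety. If one insists on an $L$-rational point, pass to a finite extension or simply interpret ``nonempty'' in the scheme sense, as is standard for Zariski-open conditions. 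Either way the generic point of $\A^1$ lies in $U^\circ$.

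The main obstacle is the \textbf{isotrivial case}, where $f$ becomes conjugate to a constant family after base change and $\PrePer(f, L(T))$ may be infinite, so that the finiteness of $\Gamma$ — the backbone of the argument above — fails. Here I would instead use the structure of isotriviality directly: after conjugating $f$ by a Möbius transformation $\mu \in \PGL_2(\overline{L(T)})$ one has $\mu f \mu^{-1} = g$ with $g \in \Rat_d(L')$ constant in $T$ for a finite extension $L'/L$, whence $\PrePer(f, L(T)) = \mu^{-1}\big(\PrePer(g, L')\big) \cap \P^1(L(T))$, a set whose specialization behaviour is governed by the (fixed, finite) set $\PrePer(g, L')$ and the reduction of $\mu$; one shows the conjugating map specializes compatibly outside a proper closed set, reducing the injectivity question to the trivial observation that distinct fixed points of a \emph{constant} family cannot collide. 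I expect this case to require the most care, precisely because the counting of preperiodic points is no longer finite and one has to control the (potentially large) degree of the field of definition $L'$ and of the conjugating Möbius map $\mu$.
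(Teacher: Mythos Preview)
Your non-isotrivial case is exactly the paper's argument, with one correction: the finiteness of $\Gamma$ over the function field $L(T)$ for a non-isotrivial $f$ is Baker's theorem \cite[Theorem~1.6]{Baker2009}, not Northcott's. Northcott is the number-field statement; the function-field analogue genuinely requires the non-isotriviality hypothesis and is the content of Baker's result, which the paper invokes by name.

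In the isotrivial case your sketch is on the right track and, once made precise, suffices for the lemma, but the paper proves more. You propose to restrict to the good-reduction locus $V$ of the conjugating M\"obius map $\mu$ and argue that on $U \cap V$ the specialization of $\mu$ carries any collision $P_t = Q_t$ to a collision among the \emph{constant} preperiodic points $\mu(P), \mu(Q) \in \PrePer(g, \bar L)$, which is impossible; since $U \cap V$ is a nonempty open in $\A^1$, you are done. The paper instead establishes $V = U$ outright, via a short Berkovich-space argument: for $t \in U$ both $f$ and the constant family $g$ have good reduction at the place $\ord_{T-t}$, so both the Gauss point $\zeta \in \P^1_\an$ and its translate $\mu^{-1}(\zeta)$ are completely $f$-invariant; since $\deg f > 1$ such a point is unique, forcing $\mu(\zeta) = \zeta$, i.e.\ $\mu$ has good reduction at $t$. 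This gives the stronger conclusion $U^\circ = U$ in the isotrivial case and entirely sidesteps your worry about controlling the degree of $\mu$ or of the extension $L'$. For the lemma as stated either route works---a nonempty Zariski open in $\A^1$ is already cofinite, and that cofiniteness is all the paper uses downstream.
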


\begin{proof}
First, assume $f$ is isotrivial, 
i.e.,~there exists a M\"obius map $\psi$ defined over 
a finite extension $M$ of $L(T)$
such that $g := \psi \circ f \circ \psi^{-1}$ 
is defined over the field of constants of $M$.
Given a valuation $w$ of $M$ and a point $t$ in $L$,
write $w \mid t$ to mean $w$ lies over the valuation $v = \ord_{T - t}$ of $L(T)$ (i.e.,~$\mathfrak{m}_w \cap O_v = \mathfrak{m}_v$).
We claim that 
\begin{equation} \label{eq:dom_def_psi}
\begin{gathered}
\{w : w \mid t \text{ for some $t$ in $\A^1(L)$ and $\psi$ has good reduction at $w$}\} 
\\
=
\{w : w \mid t \text{ for some $t$ in $U(L)$}\}.
\end{gathered}
\end{equation}
For starters, suppose $w \mid t$.
Then $g_w = g$ 
because $w$ is trivial on the field of constants of $M$,
and $f_w = f_t$ because any integral lift of $f$ 
is $w$-normalized (as $w = e \cdot v$ on $L(T)$ for some $e \ge 1$).
In particular, 
$g$ always has good reduction at $w$; 
and $f$ has good reduction at $w$ if and only if $t \in U(L)$.

The inclusion $\subseteq$ in equation \eqref{eq:dom_def_psi} is now immediate: if $\psi$ has good reduction at $w$, 
then by direct calculation of the resultant so does $\psi^{-1}$, and hence by \cite[Theorem 2.18(b)]{Silverman} 
so does the composition $\psi^{-1} \circ g \circ \psi = f$.
For the other inclusion, 
let $w \mid t$ for some $t \in U(L)$,
and let $\zeta \in \P^1_\an$ denote the Gauss point in the 
Berkovich projective line over the completion $M_w$.
Since $g$ has (explicit) good reduction at $w$, 
we have $g^{-1}(\{\zeta\}) = \{\zeta\}$;
thus
\[
    f^{-1}\big(\{\psi^{-1}(\zeta)\}\big) 
    = \psi^{-1}\big(g^{-1}(\{\zeta\})\big)
    = \{\psi^{-1}(\zeta)\}
\]
    which says $\psi^{-1}(\zeta)$ is a completely invariant hyperbolic point of $f$.
    Since $\deg f > 1$ such a point is unique,
    and since $\deg f_w = \deg f$ this point must be $\zeta$.
    It follows that $\psi(\zeta) = \zeta$, i.e.,~$\psi$ has nonconstant---hence good---reduction at $w$.\footnote{This neat argument was shown to the author by Rob Benedetto.
See 
    \cite[Lemma 2.17 and Propositions 2.15, 10.5, 10.45]{BakerRumely_book}
or
    \cite[Corollary 7.16, Theorem 7.34, Propositions 8.12 and 8.13]{Benedetto_book}
for the relevant statements from the theory of Berkovich space.}

Since $\psi$ is a homomorphism from $f$ to $g$ (cf.~\S \ref{ss:dynamics})
we get $\psi(\Gamma) \subseteq \PrePer(g, M)$.
But because $\deg g = \deg f > 1$, 
the preperiodic points of $g$ 
are algebraic over any field of definition of $g$.
Thus, for each $P \in \Gamma$, 
$\psi(P)$ is defined over the field of constants of $M$.

At last, let $t \in U(L)$.
By Chevalley's extension theorem, 
there exists a valuation $w$ on $M$ lying over $t$.
By \eqref{eq:dom_def_psi}, 
$\psi$ has good reduction at $w$.
If $P, Q \in \Gamma$ with $P_t = Q_t$
then by the preceding paragraph
\begin{align*}
\psi(P) = \psi(P)_w = \psi_w(P_w) &= \psi_w(P_t) \\
                    &= \psi_w(Q_t) = \psi_w(Q_w) = \psi(Q)_w = \psi(Q).
\end{align*}
Since $\psi$ is invertible, $P = Q$. 
Thus $t \in U^\circ(L)$,
which shows that $U^\circ = U$.

Second, assume $f$ is \emph{not} isotrivial.
Then by Baker's theorem \cite[Theorem 1.6]{Baker2009}, 
$\Gamma$ is finite.
Writing $\Gamma = \{P_1, \ldots, P_m\}$ where $P_i = (a_i : b_i)$ with $a_i, b_i  \in L[T]$ coprime,
we see that $(P_i)_t = (P_j)_t$ for some $i \ne j$ if and only if 
$t$ is a root of the homogeneous Vandermonde determinant 
\[D_\Gamma 
= \prod_{i < j} P_j \wedge P_i 
= \prod_{i < j} (a_j b_i - a_i b_j) 
\in L[T].\]
Thus, $U^\circ = U \setminus V(D_\Gamma)$.
\end{proof}

\begin{rmk}
If $f$ is isotrivial, then $\Gamma$ may be infinite. 
However, if $L$ is a number field, 
then by Lemma \ref{lem:inj_locus} and Northcott's theorem there exists a finite set into which $\Gamma$ embeds.
Thus, over number fields, 
the generic number of preperiodic points is always finite.
\end{rmk}

The surjectivity locus is a bit more subtle, 
and is best understood by considering its complement.
Essentially, surjectivity fails if and only if $\Gamma$ 
acquires a new branch or a new cycle upon specialization.
New branches arise when $f_t^{-1}(P_t) \supsetneq f^{-1}(P)_t$ for some $P$ in $\Gamma$, i.e.,~%
when $P_t$ has more $L$-rational preimages by $f_t$ 
than just the specializations of the $L(T)$-rational preimages of $P$ by $f$;
thus, new branches are governed by the irreducibility of $f(z) - P$.
By contrast, new cycles arise as unexpected roots of the period polynomials $f^l(z) - z$, 
and may be arbitrarily long.
These remarks serve as the basis for our next result. 
Henceforth, we work over a number field $K$.
Write $(a, b) \wedge (c, d)$ to mean $ad - bc$.

\begin{lem} \label{lem:surj_locus}
Let $f_t$ be a one-parameter family of rational maps over $K$ with domain of definition $U$, 
and let 
\[\Sigma = \{t \in U(K) : \text{the specialization map } \Gamma \to \Gamma_t \text{ is surjective}\}\] 
be the surjectivity locus.
For each $l \ge 0$ define 
\[Z_l = \{t \in U(K) : f_t \text{ has a $K$-rational cycle of length exceeding } l\}.\]
Let $l_0$ be the maximum period among points of $\Gamma$ (set $l_0 = 0$ if $\Gamma = \varnothing$).
Then for all $l \ge l_0$, 
the symmetric difference\footnote{We use the symmetric difference here for counting purposes later.} 
\[
    \Sigma^c \mathbin{\triangle} Z_l = (\Sigma \cap Z_l) \cup (\Sigma^c \cap Z_l^c)
\] 
is a thin subset of $K$ (which depends on $f$ and $l$).
\end{lem}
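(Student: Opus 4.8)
The plan is to decompose $\Sigma^c \mathbin{\triangle} Z_l$ into finitely many thin pieces, using the dichotomy ``new cycle vs.\ new branch'' sketched before the statement, and then appeal to the version of Hilbert irreducibility established in the Thin Sets subsection (namely: the parameters $t$ at which a binary form $\Phi \in K[T][X,Y]$ with no $K(T)$-root acquires a $K$-root form a thin set). First I would show $\Sigma \cap Z_l$ is thin: if $t \in Z_l$ then $f_t$ has a $K$-rational cycle of length $> l \ge l_0$, hence (by maximality of $l_0$) a periodic point not in the image of $\Gamma$ under specialization, so the specialization map $\Gamma \to \Gamma_t$ is \emph{not} surjective, i.e.\ $t \notin \Sigma$; thus $\Sigma \cap Z_l = \varnothing$, which is vacuously thin. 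The real content is therefore $\Sigma^c \cap Z_l^c$: parameters at which surjectivity fails but no ``long'' cycle appears.

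So suppose $t \in \Sigma^c \cap Z_l^c$. Then the specialization map $\Gamma \to \Gamma_t$ misses some $Q \in \Gamma_t = \PrePer(f_t, K)$. Chase $Q$ forward under $f_t$ until it lands on its cycle: that cycle has length $\le l$ (since $t \notin Z_l$), and every periodic point of length $\le l$ that is the specialization of an $L(T)$-rational periodic point already lies in the image; so either the cycle itself is ``new'' — a root of $f_t^{\,j}(z) = z$ for some $1 \le j \le l$ not coming from a root of $f^{\,j}(z) = z$ over $K(T)$ — or the cycle is old but some point $Q' = f_t^{\,k}(Q)$ on the tail leading into it is new, in which case $Q'$ is a $K$-rational preimage under $f_t$ of an already-accounted-for point $P_t$ (with $P \in \Gamma$) that is \emph{not} among the specializations of the $K(T)$-rational $f$-preimages of $P$ — i.e.\ the binary form obtained by homogenizing $f(z) - P$ (more precisely $F(X,Y)b - G(X,Y)a$ for $P = (a:b)$, with a homogeneous coordinate for $z$) acquires an ``extra'' root at $t$. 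In the first case I apply HIT to the finitely many period polynomials $\Phi_j(T; X, Y)$ homogenizing $f^{\,j}(z) - z$ for $j = 1, \dots, l$: for those $j$ where $f^{\,j}(z) - z$ has no root over $K(T)$ the set of $t$ acquiring a root is thin, and the (finitely many) roots that \emph{do} exist over $K(T)$ contribute specializations already in the image of $\Gamma$, hence are irrelevant — one must here discard the finitely many $t$ where two such $K(T)$-roots collide or where a root becomes indeterminate (a subset of $V(D_\Gamma) \cup V(R_f)$, finite, hence thin). In the second case, for each of the finitely many $P \in \Gamma$ and each factor of $f(z) - P$ over $K(T)$, the complementary cofactor either has a $K(T)$-root (finitely many $t$) or is a binary form over $K[T][X,Y]$ with no $K(T)$-root, to which HIT applies. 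A finite union of thin sets is thin, so $\Sigma^c \cap Z_l^c$ is thin, and hence so is the whole symmetric difference.

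The step I expect to be the main obstacle is the bookkeeping in the second (``new branch'') case: one needs to argue that \emph{if} $Q'$ is a new $K$-rational $f_t$-preimage of $P_t$, then it genuinely comes from an extra root of a cofactor that has no root over $K(T)$ — i.e.\ that the $K(T)$-rational preimages of $P$ specialize exactly to the ``old'' preimages away from a thin (indeed finite, Zariski-closed) locus of $t$. This requires that specialization commutes with the factorization $f(z) - P = \prod (\text{irreducible factors})$ generically in $t$, which follows from the fact that the discriminant/resultant data of these factorizations is a nonzero polynomial in $T$, so the factorization pattern is preserved outside its (finite) vanishing locus; the remaining bad $t$ form a finite set, contained in the vanishing locus of a single nonzero polynomial, hence thin. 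One also has to be slightly careful that $Q'$ could a priori fail to be a \emph{direct} $f_t$-preimage of a point of the form $P_t$ with $P \in \Gamma$ — but walking one step at a time toward the cycle, the first ``new'' vertex encountered has its (unique) $f_t$-image already equal to some $P_t$ with $P \in \Gamma$, which is exactly the needed setup.
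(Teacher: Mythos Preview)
Your approach is correct and follows the ``new cycle vs.\ new branch'' dichotomy quite literally, but the paper takes a slightly different and more economical route. Rather than applying HIT separately to each period polynomial $f^j(z)-z$ for $1\le j\le l$ and to each preimage polynomial $f(z)-P$ for $P\in\Gamma$, the paper bundles everything into a \emph{single} binary form
\[
\Phi = f^{\,l!+m_0+1}\wedge f^{\,m_0+1}\in K[T][X,Y],
\]
where $m_0$ is the maximal tail length in $\Gamma$. One then factors $\Phi=\Phi_0\Phi_1$ with $\Phi_0$ split over $K(T)$ and $\Phi_1$ rootless, and shows that for $t\in U\setminus Z_l$ every point of $\Gamma_t\setminus(\Gamma)_t$ has some forward iterate in $V((\Phi_1)_t)(K)$; a single application of HIT to $\Phi_1$ finishes the job. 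The exponent $l!$ ensures every cycle of length $\le l$ is captured, and the shift by $m_0+1$ absorbs all generic tails simultaneously. Your approach buys transparency (each thin set has a clear dynamical meaning), while the paper's buys brevity (one polynomial, one HIT application, no need to iterate over $j$ or over $P\in\Gamma$).

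One small point of imprecision in your writeup: the clause ``for those $j$ where $f^{\,j}(z)-z$ has no root over $K(T)$'' is not what you actually need, since for most $j$ there \emph{are} $K(T)$-roots (the generic periodic points). What you really mean---and what your subsequent parenthetical suggests---is to factor each $\Phi_j$ as (split part)$\cdot$(rootless part) and apply HIT to the rootless part, exactly as the paper does for its single $\Phi$. With that correction the argument goes through.
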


\begin{proof}
The length of a cycle cannot increase under specialization, 
so $\Sigma \cap Z_l = \varnothing$ for all $l \ge l_0$.
Thus, it suffices to show that each $\Sigma^c \cap Z_l^c$ is thin.
This is clear for $l < l_0$, as $Z_l \supseteq U^\circ(K)$ 
in this range.
So, 
let $l \ge l_0$ and let $m_0$ be the maximum tail length among points of $\Gamma$ (set $m_0 = -1$ if $\Gamma$ is empty).
Using an integral lift of $f$, put 
\[
    \Phi = f^{l! + m_0 + 1} \wedge f^{m_0 + 1} \in K[T][X, Y].
\]
Note that $\Phi$ is nonzero because $l! > 0$ and $\deg f > 1$.
Factor $\Phi = \Phi_0 \Phi_1$ where $\Phi_0$ splits completely\footnote{Up to a constant multiple, 
$\Phi_0 = \prod_{P \in \Gamma} \Phi_P^{m_P}$
where $\Phi_{(a:b)}(X, Y) = (X, Y) \wedge (a, b) = bX - aY$ is the unique linear form vanishing at $(a:b)$, and $m_P = \ord_P(\Phi) \ge 1$ for all $P \in \Gamma$.} 
over $K(T)$
and $\Phi_1$ has no roots in $\P^1(K(T))$,
so that
\[
    V(\Phi)(K(T)) = \Gamma = V(\Phi_0)(K(T)).
\]
In particular, 
\begin{equation} \label{eq:lem2_1}
    V(\Phi_t)(K) = (\Gamma)_t \cup V((\Phi_1)_t)(K)
\end{equation}
for all $t \in U(K)$.
We claim that if $t \in U(K) \setminus Z_l$ 
then 
\begin{equation} \label{eq:lem2_2}
    \Gamma_t \setminus (\Gamma)_t \subseteq f_t^{-\infty}\big(V((\Phi_1)_t)(K)\big).
\end{equation}
To see why, let $Q \in \Gamma_t \setminus (\Gamma)_t$
and consider the following two cases.
\begin{enumerate}[i.]
    \item 
    \textit{Some iterate of $Q$ lies in $(\Gamma)_t$.}
    Let $k$ be such that $f_t^k(Q) \not\in (\Gamma)_t$
    and $f_t^{k+1}(Q) \in (\Gamma)_t$.
    Then $f_t^{k+1}(Q) = P_t$ for some $P \in \Gamma$.
    Specializing the identity $f^{l! + m_0}(P) = f^{m_0}(P)$ yields
    \[
    f_t^{l! + m_0 + 1}(f_t^k(Q)) 
       = f_t^{m_0 + 1}(f_t^k(Q)).
    \]
    \item 
    \textit{No iterate of $Q$ lies in $(\Gamma)_t$.}
    Since $Q$ is $f_t$-preperiodic, 
    there exists $k \ge 0$
    such that $f_t^{m_0 + 1 + k}(Q)$ is $f_t$-periodic.
    The assumption $t \not \in Z_l$ prevents the period of this point from exceeding $l$,
    so that 
    \[
        f_t^{l! + m_0 + 1}(f_t^k(Q)) 
        = f_t^{l!}(f_t^{m_0 + 1 + k}(Q))
        = f_t^{m_0 + 1}(f_t^k(Q)).
    \]
\end{enumerate}
Either way,
some iterate of $Q$ (namely, $f_t^k(Q)$)
lies in $V(\Phi_t)(K) \setminus (\Gamma)_t$, 
hence in $V((\Phi_1)_t)(K)$ by \eqref{eq:lem2_1}.
This proves \eqref{eq:lem2_2}.

Now if $t \in U(K) \cap \Sigma^c \cap Z_l^c$
then 
$\Gamma_t \setminus (\Gamma)_t$ is nonempty,
so by \eqref{eq:lem2_2} $V((\Phi_1)_t)(K)$ is nonempty.
Therefore, 
$\Sigma^c \cap Z_l^c \subseteq U(K)^c \cup \Omega$ 
where 
\[
    \Omega = \{t \in K : V((\Phi_1)_t)(K) \ne \varnothing\}
\]
is the set of parameters where the specialization of $\Phi_1$ acquires a root in $\P^1(K)$.
By Hilbert's irreducibility theorem,
$\Sigma^c \cap Z_l^c$ is thin, as desired.
\end{proof}

\begin{rmk}
Complementary to \eqref{eq:lem2_2} we have 
\[
    V((\Phi_1)_t)(K) \subseteq \Gamma_t \setminus (\Gamma)_t
\]
for all $t \in U(K) \setminus V(\Res(\Phi_0, \Phi_1))(K)$.
This is because 
$V((\Phi_0)_t) \cap V((\Phi_1)_t) = V((\Phi_0)_t, (\Phi_1)_t) \ne \varnothing$ (over $\bar{K}$) iff $\Res((\Phi_0)_t, (\Phi_1)_t) = \Res(\Phi_0, \Phi_1)_t = 0$.
It follows that 
\[
    \Omega \subseteq \Sigma^c \cup V(\Res(\Phi_0, \Phi_1))(K).
\]
\end{rmk}

\begin{q}
Given that 
\[
    Z_0 
    = \{t \in U(K) : \text{$f_t$ has a $K$-cycle}\} 
    = \begin{cases}
    U(K) & \Gamma \ne \varnothing \\
    U(K) \setminus \Sigma & \Gamma = \varnothing
    \end{cases}
\]
we ask: Can $\Gamma$ be empty?
(See also Remark \ref{rmk:ingram_sadek} below.)
\end{q}

Combining Lemmas \ref{lem:inj_locus} and \ref{lem:surj_locus} 
yields the following fundamental estimate for the counting function of the exceptional set.

\begin{cor} \label{cor:exceptional_count}
For each sufficiently large $l$, 
\[
    \Num(E, X) = \Num(Z_l, X) + O\big(\Num(K, X)^{1/2}\big)
\]
where the implicit constant depends on $f$, $l$, and $K$.
\end{cor}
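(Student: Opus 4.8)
The plan is to combine the two lemmas just proved, being careful about how "thin" and "Zariski open" sets contribute to the counting function. Recall the exceptional set $E = U \setminus (U^\circ \cap \Sigma)$ decomposes as $E = (U \setminus U^\circ) \cup (U \setminus \Sigma) = (U \setminus U^\circ) \cup \Sigma^c$, where the complements are taken inside $U$. By Lemma \ref{lem:inj_locus}, $U \setminus U^\circ = V(D_\Gamma)(K)$ is a proper Zariski closed subset of $\A^1$ (or empty, if $f$ is isotrivial), hence finite; in particular $\Num(U \setminus U^\circ, X) = O(1)$. So up to $O(1)$, $\Num(E, X) = \Num(\Sigma^c \cap U, X)$.

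Next I would fix $l$ large enough that $l \ge l_0$ (the maximum period among points of $\Gamma$), so that Lemma \ref{lem:surj_locus} applies. That lemma says $\Sigma^c \mathbin{\triangle} Z_l$ is thin, say contained in a thin set $\Omega_l \subseteq K$. For counting purposes, $\Sigma^c$ and $Z_l$ differ only on $\Omega_l$: concretely, $\Sigma^c \subseteq Z_l \cup \Omega_l$ and $Z_l \subseteq \Sigma^c \cup \Omega_l$ (since $\Sigma \cap Z_l = \varnothing$ for $l \ge l_0$ gives $Z_l \subseteq \Sigma^c$ outright, actually — so in fact $\Sigma^c \mathbin{\triangle} Z_l = \Sigma^c \setminus Z_l$, and it suffices that $\Sigma^c \subseteq Z_l \cup \Omega_l$). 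Either way,
\[
    |\Num(\Sigma^c, X) - \Num(Z_l, X)| \le \Num(\Omega_l, X).
\]
Now invoke the thin-set estimate \eqref{eq:thin}: $\Num(\Omega_l, X) \ll X^n$ where $n = [K:\Q]$. On the other hand, Schanuel's theorem \eqref{eq:schanuel} gives $\Num(U, X) \asymp \Num(\P^1(K), X) \asymp X^{2n}$ (the removal of the proper closed complement $\P^1 \setminus U$ changes the count only by a lower-order term, again by \eqref{eq:thin} or directly). Hence $X^n \asymp \Num(U, X)^{1/2}$, and so $\Num(\Omega_l, X) \ll \Num(U, X)^{1/2}$.

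Assembling: $\Num(E, X) = \Num(\Sigma^c \cap U, X) + O(1) = \Num(Z_l, X) + O(\Num(U,X)^{1/2}) + O(1)$, and the $O(1)$ is absorbed into the error since $\Num(U,X)^{1/2} \to \infty$. The implicit constant depends on the thin set $\Omega_l$ (hence on $f$, $l$, and $K$) and on the Schanuel constant $c_K$. I do not anticipate a serious obstacle here — the content is entirely in Lemmas \ref{lem:inj_locus} and \ref{lem:surj_locus}, which are already established; the only thing to watch is bookkeeping the direction of the inclusions in the symmetric difference and confirming that "$U$ is Zariski dense open in $\A^1$, so counting over $U(K)$ versus $\A^1(K)$ versus $\P^1(K)$ agrees up to $O(X^{2n-1}\log X)$, comfortably below the $X^n$ error term we are claiming." The phrase "for each sufficiently large $l$" in the statement simply means $l \ge l_0$.
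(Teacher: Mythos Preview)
Your proposal is correct and follows essentially the same route as the paper: both arguments use Lemma~\ref{lem:inj_locus} to reduce $E$ to $\Sigma^c$ up to a finite set, Lemma~\ref{lem:surj_locus} to pass from $\Sigma^c$ to $Z_l$ up to a thin set, and then the thin-set estimate \eqref{eq:thin} together with Schanuel \eqref{eq:schanuel} to bound the discrepancy by $\Num(U,X)^{1/2}$. The paper packages the bookkeeping via the identity $E \mathbin{\triangle} Z_l = (E \mathbin{\triangle} \Sigma^c) \mathbin{\triangle} (\Sigma^c \mathbin{\triangle} Z_l)$ and the inequality $|m(A)-m(B)|\le m(A\mathbin{\triangle}B)$, whereas you unpack the inclusions directly, but the content is identical.
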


\begin{proof}
By Lemma \ref{lem:inj_locus}, 
$E$ and $\Sigma^c$ differ by a finite set,
while by Lemma \ref{lem:surj_locus}, 
$\Sigma^c$ and $Z_l$ differ by a thin set.
Since
\[
    E \mathbin{\triangle} Z_l 
    = 
    (E \mathbin{\triangle} \Sigma^c) \mathbin{\triangle} (\Sigma^c \mathbin{\triangle} Z_l)
\]
we get that $E$ and $Z_l$ differ by a thin set (as thin sets form an ideal).
Note that 
\[
    \lvert m(A) - m(B) \rvert \le m(A \mathbin{\triangle} B)
\]
for any finite non-negative additive set-function $m$.
So, for each $X$, 
\[
    \Num(E, X) = \Num(Z_l, X) + O\big(\Num(E \mathbin{\triangle} Z_l, X)\big).
\]
The claim now follows from \eqref{eq:thin} and Schanuel's theorem \eqref{eq:schanuel},
which entail
\[
    \Num(\Omega, X) \ll X^n \ll \Num(K, X)^{1/2}
\]
for any thin subset $\Omega$ of a number field of degree $n$.
\end{proof}

\section{Portrait-preserving specializations} \label{sec:little_o}

The main result of this section says that almost all specializations are portrait-preserving.

\begin{thm} \label{thm:little_o}
Let $f_t$ be a one-parameter family of rational maps over a number field $K$ with 
exceptional set $E$. 
Then 
\[
    \Num(E, X) = o\big(\Num(K, X)\big).
\]
\end{thm}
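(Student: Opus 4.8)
By Corollary \ref{cor:exceptional_count}, for each sufficiently large $l$ we have
\[
    \Num(E, X) = \Num(Z_l, X) + O\big(\Num(U, X)^{1/2}\big),
\]
and since $\Num(U, X)^{1/2} = o(\Num(U, X))$ by Schanuel's theorem, it suffices to show that $\Num(Z_l, X) = o(\Num(U, X))$ for some sufficiently large $l$. The plan is to prove the stronger statement that $\Num(Z_l, X)/\Num(U, X) \to 0$ as $l \to \infty$, uniformly in $X$ in a suitable sense; more precisely, I would show that for every $\varepsilon > 0$ there exists $l$ such that $\limsup_{X \to \infty} \Num(Z_l, X)/\Num(U, X) < \varepsilon$, which already gives the result since the left-hand side is nonincreasing in $l$.

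The key step is a local argument bounding, for each fixed period $n$, the parameters $t \in K$ for which $f_t$ has a $K$-rational point of exact period $n$. This is where the ``$n = mrp^e$'' theorem enters: if $f_t$ has good reduction at a prime $\mathfrak{p}$ and a $K$-rational point of period $n$, then $n = m r p^e$ where $m$ is the period of the reduction, $r \mid p^{f_\mathfrak{p}} + 1$ or $r = 1$ (the multiplicative order condition), and $p^e$ accounts for the residue characteristic. Applied contrapositively: if $n$ is chosen so that $n$ has a large prime factor or is divisible by a large prime power, then for $f_t$ to have a point of period $n$, either $f_t$ has bad reduction at some prime of norm $\ll \sqrt{n}$, or $t$ lies in a restricted set of residues modulo such a prime. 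Concretely, I would first use Hilbert irreducibility to show that $t \in Z_l$ forces (away from a thin set) that $f_t$ has a $K$-rational cycle of length $n$ for some $n$ in a set $\mathcal{N}_l$ of integers all exceeding $l$ and controlled in number; the period polynomial $f^n(z) - z$ having a root mod $\mathfrak{p}$ constrains $t$ to a bounded number of residue classes mod $\mathfrak{p}$ for each prime $\mathfrak{p} \lesssim \sqrt{n}$ of good reduction.

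The counting then goes as follows. Parameters $t$ with bad reduction at some prime $\mathfrak{p}$ of bounded norm are rare: the condition $\mathfrak{p} \mid R_f(t)$ (or that $f_t$ itself degenerates) cuts out a proportion $O(1/\Nm\mathfrak{p})$ of parameters up to height $X$, and summing a convergent-type tail over primes gives a small proportion once we restrict to $\mathfrak{p}$ of norm exceeding some bound $B$. For the remaining $t$ (good reduction at all small primes), membership in $Z_l$ forces $t$ to lie, for each $n \in \mathcal{N}_l$ and each good prime $\mathfrak{p}$ with $\Nm\mathfrak{p} \lesssim \sqrt{n}$, in $O(1)$ residue classes mod $\mathfrak{p}$; choosing $l$ large forces $n$ large, hence a long interval of such primes, and a sieve (or just the observation that $t$ must avoid all but $O(1)$ residues mod each of many primes, intersected) shows the density of such $t$ tends to $0$ as $l \to \infty$. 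The density-zero conclusion for parameters in $K$ (as opposed to $\Z$) follows by working with the $\mathfrak{p}$-adic valuations of the coordinates of $t \in \P^1(K)$ and invoking Schanuel-type equidistribution of $K$-points in residue classes.

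The main obstacle I anticipate is making the sieve estimate uniform and genuinely quantitative over a number field: one needs that $K$-rational points of height $\le X$ equidistribute among residue classes modulo $\mathfrak{p}$ (with the expected error) well enough to sum over a range of primes growing with $n$, and one must handle the fact that $\mathcal{N}_l$ could in principle be infinite (there are infinitely many $n > l$) — so the argument must exploit that for \emph{most} large $n$ the multiplicative constraint $r \mid p^{f_\mathfrak{p}}+1$ is very restrictive, ensuring that a positive proportion of primes below $\sqrt{n}$ actually impose a nontrivial congruence condition on $t$. Packaging ``$t$ avoids $O(1)$ residues mod each of $\gg \sqrt{n}/\log n$ primes'' into a bound of the form $\Num(Z_l, X) \ll_l X^{n} \cdot (\text{something} \to 0)$ — with the implied constant uniform as $l \to \infty$ — is the technical heart, and is where I would spend most of the effort.
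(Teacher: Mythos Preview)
Your opening reduction via Corollary~\ref{cor:exceptional_count} is exactly right, and you have correctly identified the two main ingredients: the ``$n = mrp^e$'' theorem and the fact that bad reduction at a prime $\mathfrak{p}$ confines $t$ to a bounded number of residue classes modulo $\mathfrak{p}$. However, you are vastly overcomplicating the argument, and the ``main obstacle'' you anticipate simply does not arise.

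The paper's proof uses a \emph{single} prime, not a sieve. Here is the point you are missing. Proposition~\ref{propn:n=mrpe} (the consequence of ``$n = mrp^e$'') says that if $f_t$ has good reduction at $\mathfrak{p}$, then \emph{every} $K$-rational cycle of $f_t$ has length at most $3e_\mathfrak{p}\Nm\mathfrak{p}^2$. So: given $\varepsilon > 0$, pick one prime $\mathfrak{p}$ with $\Nm\mathfrak{p}$ large (specifically, exceeding $\varepsilon^{-1}(\deg R_f + 1) - 1$), avoiding the finitely many primes where the integral lift fails to be $\mathfrak{p}$-integral or where $R_f$ reduces to zero. Now take $l > 3e_\mathfrak{p}\Nm\mathfrak{p}^2$. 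If $t \in Z_l$, then $f_t$ has a cycle longer than $l$, hence longer than $3e_\mathfrak{p}\Nm\mathfrak{p}^2$, hence $f_t$ has bad reduction at $\mathfrak{p}$. By Lemma~\ref{lem:badred}, this forces $\tilde{t}$ into $V(\tilde{R}_f)(\F_\mathfrak{p}) \cup \{\infty\}$, i.e.,~at most $\deg R_f + 1$ residue classes. A refined Schanuel estimate then gives
\[
    \limsup_{X \to \infty} \frac{\Num(Z_l, X)}{\Num(U, X)} \le \frac{\deg R_f + 1}{\Nm\mathfrak{p} + 1} < \varepsilon.
\]

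There is no need to track which period $n > l$ actually occurs, no infinite set $\mathcal{N}_l$ to control, no sieve over a growing range of primes, and no uniformity issue. The bound on cycle length from good reduction at $\mathfrak{p}$ is uniform in $t$ and in the period, so a single congruence condition modulo one prime already traps all of $Z_l$.
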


To prove Theorem \ref{thm:little_o}, 
we need a lemma relating bad primes of $f_t$ to roots of the resultant polynomial of the family.
Recall that a polynomial
\[c = \sum_i c_i T^i \in L[T]\]
is called \emph{$v$-integral} if $v(c) = \min_i v(c_i) \ge 0$.
By the nonarchimedean property,
if $c$ is $v$-integral then $v(c(t)) \ge 0$ whenever $v(t) \ge 0$.

\begin{lem} \label{lem:badred}
Let $L$ be a field with discrete valuation $v$ and residue field $\ell$.
Let $f \in L(T)(z)$ have integral lift $F, G \in L[T][X, Y]$ and resultant $R := R_f = \Res(F, G)$. 
Let $t \in L$. 
Write $\tilde{R}$ (resp.~$\tilde{t}$) for the reduction of $R$ (resp.~$t$) mod $v$.
If: 
\begin{enumerate}[(i)]
\item $R(t) \ne 0$,
\item the coefficients of $F$ and $G$ are $v$-integral, and
\item $f_t$ has does not have good reduction at $v$,
\end{enumerate}
then 
$\tilde{t} \in V(\tilde{R})(\ell) \cup \{\infty\}$.
\end{lem}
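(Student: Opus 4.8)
The plan is to argue by contraposition, unwinding the two equivalent definitions of good reduction and using the hypotheses to control where the resultant can acquire a valuation. Suppose $\tilde t \ne \infty$, i.e.\ $v(t) \ge 0$. Hypothesis (ii) says the coefficients of $F$ and $G$ are $v$-integral, so by the nonarchimedean remark recalled just before the lemma, the coefficients of $F_t = F(t,X,Y)$ and $G_t = G(t,X,Y)$ are $v$-integral as well; that is, $F_t, G_t$ is a $v$-integral lift of $f_t$. (Here I should note that $F_t, G_t$ is genuinely a lift of $f_t$ of the right degree, which is exactly where (i) enters, via Proposition \ref{propn:specialization}.)

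The next step is to compute $v(\Res f_t)$ in terms of $v(\tilde R(\tilde t))$. Since $F_t, G_t$ is a $v$-integral lift of $f_t$ (i.e.\ $v(F_t, G_t) \ge 0$), we have $v(\Res f_t) \le v(\Res(F_t, G_t)) = v(R(t))$, using that the resultant commutes with ring homomorphisms so $\Res(F_t, G_t) = \Res(F,G)(t) = R(t)$. On the other hand $f_t$ not having good reduction at $v$ means $v(\Res f_t) \ge 1$, hence $v(R(t)) \ge 1$. Now reduction mod $v$ is a ring homomorphism $O_v[T] \to \ell[T]$, and since $R$ is $v$-integral (its coefficients being polynomials in the $v$-integral coefficients of $F$ and $G$) we get $\widetilde{R(t)} = \tilde R(\tilde t)$ in $\ell$; as $v(R(t)) \ge 1$ this reduction is $0$, i.e.\ $\tilde t$ is a root of $\tilde R$. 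If $\tilde R$ is the zero polynomial this is vacuous; otherwise $\tilde t \in V(\tilde R)(\ell)$. Either way $\tilde t \in V(\tilde R)(\ell) \cup \{\infty\}$, which is the contrapositive of what we wanted.

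I expect the main point requiring care is the identification $\widetilde{R(t)} = \tilde R(\tilde t)$ together with the degree bookkeeping: one must be sure that specialization $T \mapsto t$ and reduction mod $v$ commute, which is fine because both are ring homomorphisms and $R \in O_v[T]$, but one should also observe that the leading coefficient of $R$ may reduce to zero, so $\tilde R$ could have smaller degree (or be zero) than $R$ — this is harmless for the statement as written but worth a sentence. A secondary subtlety is verifying that an integral lift of $f$ really does have $v$-integral coefficients once (ii) is assumed and that the specialized pair $F_t, G_t$ is $v$-normalized; both follow from the nonarchimedean triangle inequality exactly as in the remark preceding the lemma, and from (i) guaranteeing $F_t, G_t$ do not both vanish identically. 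There is no serious obstacle here; the content is really just assembling the standard good-reduction dictionary plus the observation that ``plug in $t$'' and ``reduce mod $v$'' are interchangeable operations on $O_v[T]$.
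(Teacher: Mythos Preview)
Your argument is correct and follows essentially the same route as the paper: assume $v(t)\ge 0$, use (ii) to get $v(F_t,G_t)\ge 0$, use (i) so that $F_t,G_t$ is a genuine lift of $f_t$, and then combine (iii) with $v(\Res f_t)=v(\Res(F_t,G_t))-2d\,v(F_t,G_t)$ to force $v(R(t))>0$. Your extra remarks on the commutation of specialization and reduction, and on the possibility that $\tilde R$ is identically zero, are correct and harmless additions not made explicit in the paper.
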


\begin{proof}
Since $v(t) < 0$ iff $\tilde{t} = \infty$ it suffices to consider $v(t) \ge 0$.
Write
\[
	F = a_d X^d + \ldots + a_0 Y^d \quad \text{and} \quad G = b_d X^d + \ldots + b_0 Y^d
\]
with $a_i, b_i \in L[T]$ for all $i$.
The numbered hypotheses imply, in turn, that 
\begin{enumerate}[(i)]
\item $F_t, G_t$ is a lift of $f_t$, 
\item $v(F_t, G_t) = \min_i \{v(a_i(t)), v(b_i(t))\} \ge 0$, and
\item $v(\Res(F_t, G_t)) = v(\Res f_t) + 2d \, v(F_t, G_t) > 0$
\end{enumerate}
whence $R(t) \equiv 0$ (mod $v$), as desired. 
\end{proof}

We also need a result bounding the maximum cycle length of a rational map in terms of the norm and ramification index of a prime of good reduction.

\begin{propn} \label{propn:n=mrpe}
Let $\phi$ be a rational function of degree at least 2
defined over a number field $K$,
and suppose $\phi$ has good reduction at some prime $\mathfrak{p}$ of $K$.
Then every $K$-rational cycle of $\phi$ has length at most $3 e_{\mathfrak{p}} \Nm \mathfrak{p}^2$.
\end{propn}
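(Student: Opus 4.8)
The plan is to reduce everything to a purely local statement about periodic points of the reduction $\phi_\mathfrak{p}$ over the residue field $\F_q$, where $q = \Nm\mathfrak{p}$, and then combine with a ramification estimate. First I would recall the standard ``$n = mrp^e$'' mechanism: if $P \in \P^1(K)$ is a periodic point of $\phi$ of exact period $n$, then since $\phi$ has good reduction at $\mathfrak{p}$, the reduction $\bar P \in \P^1(\F_q)$ is periodic under $\phi_\mathfrak{p}$; let $m$ be its exact period, so $m \mid n$ and $m \le q + 1$ (a nonconstant rational map of degree $\ge 2$ over $\F_q$ has at most $q+1$ fixed points, hence $\le q+1$ points of any given period). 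Replacing $\phi$ by $\phi^m$ and $P$ by itself, we may assume $\bar P$ is fixed by $\phi_\mathfrak{p}$, and we must bound $n/m = $ (the exact period of $P$ under $\phi^m$).

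The core of the argument is then the local dynamics at the fixed point $\bar P$ of $\psi := \phi^m$. Work in the completion $K_\mathfrak{p}$ with uniformizer $\pi$ and absolute ramification index $e = e_\mathfrak{p}$ (so $p = $ residue characteristic, and $v_\mathfrak{p}(p) = e$). Choosing a coordinate vanishing at $\bar P$, the good-reduction hypothesis lets us write $\psi(z) = \lambda z + (\text{higher order})$ with $\psi$ having coefficients in $O_{K_\mathfrak{p}}$ and $\lambda = \psi'(\bar P) \bmod \mathfrak{p}$ the multiplier of the reduced map at its fixed point. The standard case analysis on $\bar\lambda \in \F_q$ gives: if $\bar\lambda \ne 0, 1$, then $n/m$ divides the multiplicative order of $\bar\lambda$, which is $\le q - 1$; if $\bar\lambda = 0$ (attracting/superattracting), then $P = \bar P$ and $n/m = 1$; and if $\bar\lambda = 1$, a Newton-polygon / formal-group type estimate shows that the period of a point in the residue disc around $\bar P$ is of the form $r p^e$ where $r = \mathrm{ord}(\bar\lambda) = 1$ here and $p^e$ accounts for the $\mathfrak{p}$-adic contraction — more precisely $n/m = p^{e'}$ for some $e' \ge 0$, and one shows $p^{e'} \le p \cdot e_\mathfrak{p} \le e_\mathfrak{p} \cdot q$ by tracking how $v_\mathfrak{p}(\psi^{p^j}(z) - z)$ grows (it increases by at least $1$ per multiplication by $p$ once it exceeds $e/(p-1)$). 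Assembling the three cases, $n/m \le \max\{1,\, q-1,\, e_\mathfrak{p} q\} = e_\mathfrak{p} q$, hence $n \le m \cdot e_\mathfrak{p} q \le (q+1) e_\mathfrak{p} q \le 3 e_\mathfrak{p} q^2$ (using $q + 1 \le \tfrac{3}{2} q$ for $q \ge 2$, with room to spare), which is the claimed bound $3 e_\mathfrak{p} \Nm\mathfrak{p}^2$.

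The step I expect to be the main obstacle is the $\bar\lambda = 1$ ramified case: getting the clean bound $p^{e'} \le p\, e_\mathfrak{p}$ (rather than something with an extra additive constant or a worse dependence on $e_\mathfrak{p}$) requires a careful Weierstrass-preparation argument on $\psi^{p^j}(z) - z$, keeping track of the valuations of the coefficients as $j$ increases, and handling the boundary case where the initial contraction is exactly at the critical valuation $e/(p-1)$. I would model this on the treatment of periodic points in $\P^1$-dynamics over local fields (e.g.\ the arguments behind Li's theorem and its refinements, or Silverman's book), adapting the constants so that the final product lands inside $3 e_\mathfrak{p} \Nm\mathfrak{p}^2$ with a comfortable margin; the factor $3$ is exactly the slack needed to absorb both the $q+1 \le \tfrac{3}{2}q$ and the $p^{e'} \le p\,e_\mathfrak{p} \le 2 e_\mathfrak{p}$ (when $p = q$, $e' \le 1$) estimates simultaneously. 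The archimedean-vs-nonarchimedean distinction does not arise since the hypothesis is entirely about a single nonarchimedean prime; and the restriction to degree $\ge 2$ is used only to guarantee $\#\Fix(\phi_\mathfrak{p}) \le q + 1$ and the uniqueness of completely invariant points, both already in force.
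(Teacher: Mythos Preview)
The paper's own proof is a one-line citation to Silverman's \emph{Arithmetic of Dynamical Systems}, Theorems~2.21 and~2.28, and your proposal is an attempt to reconstruct what lies behind those citations---the $n = mrp^e$ decomposition together with explicit bounds on each factor---so the overall strategy is the intended one.

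There is, however, a genuine gap in your case analysis. When $\bar\lambda \ne 0,1$ you assert that $n/m$ divides $r = \ord(\bar\lambda)$, but Theorem~2.21 only yields $n/m \in \{1,\, r,\, rp^e\}$, and the possibility $n/m = rp^e$ with $r > 1$ and $e \ge 1$ is not excluded. The routine fix is to pass from $\psi = \phi^m$ to $\phi^{mr}$, whose reduced multiplier at $\bar P$ is $\bar\lambda^{\,r} = 1$, and then apply your $\bar\lambda = 1$ analysis. But once you do this the bound becomes $n \le (q+1)(q-1)\,p^e$ rather than $(q+1)\cdot\max\{q-1,\, e_\mathfrak{p} q\}$, and your proposed estimate $p^e \le p\,e_\mathfrak{p}$ is then too weak to land inside $3e_\mathfrak{p} q^2$: already for $q = p = 5$ and $e_\mathfrak{p} = 1$ it gives $n \le 24\cdot 5 = 120 > 75 = 3e_\mathfrak{p} q^2$. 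Closing the constant to~$3$ requires the sharp inequality on $p^e$ recorded as Theorem~2.28 in Silverman (a Li--Zieve type estimate), which is precisely the ``careful Weierstrass-preparation argument'' you flag as the main obstacle; you should invoke that statement directly rather than the heuristic $p^{e'} \le p\,e_\mathfrak{p}$, since the exact form of the bound is what makes the arithmetic work.
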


\begin{proof}
This follows from \cite[Theorems 2.21 and 2.28]{Silverman}.
\end{proof}

\begin{proof}[Proof of Theorem \ref{thm:little_o}.]
By Corollary \ref{cor:exceptional_count},
\[
    0 \le \limsup_{X \to \infty} \frac{\Num(E, X)}{\Num(K, X)} = \limsup_{X \to \infty} \frac{\Num(Z_l, X)}{\Num(K, X)}
\]
for each $l \gg 0$,
so it suffices to prove the r.h.s.~is arbitrarily small.
To that end, let $\varepsilon > 0$. 
Fix an integral lift $F, G$ of $f$ and put $R = \Res(F, G)$.
Pick a prime $\mathfrak{p}$ of $K$ such that: 
\begin{enumerate}[(i)]
\item the coefficients of $F$ and $G$ are $\mathfrak{p}$-integral,
\item $\tilde{R} \in \F_{\mathfrak{p}}[T]$ is not the zero polynomial, and 
\item $\Nm \mathfrak{p} > \varepsilon^{-1}(\deg R + 1) - 1$.
\end{enumerate}
This is possible because each condition holds for all but finitely many primes $\mathfrak{p}$.
For $l$ sufficiently large (e.g.,~exceeding $3 e_{\mathfrak{p}} \Nm \mathfrak{p}^2$) we have 
\begin{equation} \label{eq:Zl_bad_reduction}
	Z_l 
	\subseteq 
	\{t \in U(K) : f_t \text{ has bad reduction at }\mathfrak{p}\} 
	\subseteq 
	\{t \in K : \tilde{t} \in V(\tilde{R})(\F_{\mathfrak{p}}) \cup \{\infty\}\}. 
\end{equation}
by Proposition \ref{propn:n=mrpe} and Lemma \ref{lem:badred}.
But, by a refinement of Schanuel's theorem\footnote{Communicated to me by V.~Dimitrov.  The idea is that, after embedding diagonally in the adeles, each residue class of $\F_\mathfrak{p}$ becomes a translate of a sublattice of $K$ with index ${\#}\P^1(\F_\mathfrak{p})$.}
\begin{equation} \label{eq:refined_Schanuel}
	\Num(\{t \in K : t \equiv t_0 \text{ (mod $\mathfrak{p})$}\}, X) \sim \frac{\Num(K, X)}{\Nm \mathfrak{p} + 1}
\end{equation} 
for each $\tilde{t_0} \in \P^1(\F_{\mathfrak{p}})$.
In particular, 
when ordered by height, 
the proportion of elements of $K$ lying in a fixed residue class modulo $\mathfrak{p}$ is both independent of the class and decreasing in $\Nm \mathfrak{p}$.
It follows from \eqref{eq:Zl_bad_reduction} and \eqref{eq:refined_Schanuel} 
(and our choice of $\mathfrak{p}$)
that 
\[
	\frac{\Num(Z_l, X)}{\Num(K, X)} \lesssim \frac{\deg R + 1}{\Nm \mathfrak{p} + 1} < \varepsilon. \qedhere
\]
\end{proof}

A natural question is whether Theorem \ref{thm:little_o} can be improved.
In this connection:

\begin{propn} \label{propn:equivalences1}
Let $f_t$ be a one-parameter family of rational maps over a number field $K$ with domain of definition $U$ and exceptional set $E$.
Each of the following statements implies the next; 
moreover (b)iii implies (b)i and (c)ii implies (c)i.

\begin{enumerate}[(a)] 
\item (\emph{uniform boundedness of preperiodic points}) $\sup_{t \in U(K)} \#{\PrePer}(f_t, K) < \infty$.
\item 
\begin{enumerate}[i.]
\item (\emph{uniform boundedness of periodic points}) 
    $\sup_{t\in U(K)} \#{\Per}(f_t, K) < \infty$.
\item (\emph{no large cycles}) 
    $Z_l = \varnothing$ for all $l \gg 0$.
\item (\emph{finitely many large cycles}) 
    $\# Z_l < \infty$ for some $l$.
\end{enumerate}
\item 
\begin{enumerate}[i.]
\item (\emph{rare large cycles}) 
    $\Num(Z_l, X) \ll \Num(K, X)^{1-\delta}$ for some $l, \delta > 0$.
\item (\emph{strong zero-density}) 
    $\Num(E, X) \ll \Num(K, X)^{1-\delta'}$ for some $\delta' > 0$.
\end{enumerate}
\item (\emph{weak zero-density}) $\Num(E, X) = o(\Num(K, X))$.
\end{enumerate}
\end{propn}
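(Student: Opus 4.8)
The plan is to prove the two internal equivalences first and then the three chained implications; essentially everything reduces to bookkeeping once Corollary~\ref{cor:exceptional_count} and Schanuel's theorem are invoked, the one exception being a small argument inside block (b).

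\emph{The block (b).} The implication (b)(i)$\Rightarrow$(b)(ii) is immediate: a $K$-rational cycle of length $l$ contributes exactly $l$ points to $\Per(f_t,K)$, so a uniform bound $B$ on $\#\Per(f_t,K)$ forces $Z_l=\varnothing$ as soon as $l\ge B$. The implication (b)(ii)$\Rightarrow$(b)(iii) is trivial. The substantive direction is (b)(iii)$\Rightarrow$(b)(i): assuming $\#Z_l<\infty$ for some $l$, I would split $U(K)$ into the finite set $Z_l$ and its complement. For $t\notin Z_l$ every $K$-rational cycle of $f_t$ has length $\le l$, hence divides $l!$, so $\Per(f_t,K)\subseteq\Fix(f_t^{l!})(K)$; since $t\in U$ forces $\deg f_t=\deg f=d\ge 2$, the iterate $f_t^{l!}$ has degree $d^{l!}$ and is not the identity, so it has at most $d^{l!}+1$ fixed points on $\P^1$. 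Thus $\#\Per(f_t,K)\le d^{l!}+1$ off $Z_l$, while for the finitely many $t\in Z_l$ Northcott's theorem gives $\#\Per(f_t,K)<\infty$; the maximum of these quantities is the desired uniform bound.

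\emph{The block (c).} Both directions come from Corollary~\ref{cor:exceptional_count}, which asserts $\Num(E,X)=\Num(Z_l,X)+O(\Num(U,X)^{1/2})$ for all $l$ sufficiently large. For (c)(i)$\Rightarrow$(c)(ii): since $Z_{l'}\subseteq Z_l$ whenever $l'\ge l$, the hypothesis $\Num(Z_l,X)\ll\Num(U,X)^{1-\delta}$ persists upon enlarging $l$, so I may assume $l$ is large enough for the corollary and then estimate $\Num(E,X)\ll\Num(U,X)^{1-\delta}+\Num(U,X)^{1/2}\ll\Num(U,X)^{1-\min(\delta,1/2)}$. The reverse direction feeds the bound on $\Num(E,X)$ back through the same corollary to bound $\Num(Z_l,X)$ for large $l$. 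Here I use throughout that $U(K)$ is cofinite in $\A^1(K)$, so $\Num(U,X)\asymp X^{2n}\to\infty$ by Schanuel, which is what gives the powers content.

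\emph{The chained implications.} (a)$\Rightarrow$(b) is just the inclusion $\Per(f_t,K)\subseteq\PrePer(f_t,K)$; (b)$\Rightarrow$(c) follows from (b)(iii), since for the relevant $l$ one has $\Num(Z_l,X)\le\#Z_l=O(1)\ll\Num(U,X)^{1/2}$, which is precisely (c)(i) with $\delta=1/2$; and (c)$\Rightarrow$(d) holds because $\Num(U,X)^{1-\delta'}/\Num(U,X)=\Num(U,X)^{-\delta'}\to 0$. The only step that demands any care is (b)(iii)$\Rightarrow$(b)(i)—specifically the uniform count of fixed points of $f_t^{l!}$, which rests on $f_t^{l!}$ being non-constant and non-identity (guaranteed by $d\ge 2$) together with Northcott for the finitely many exceptional parameters; everything else is routine.
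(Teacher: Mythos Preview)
Your proof is correct and follows essentially the same route as the paper's: the fixed-point bound $\#\Per(f_t,K)\le d^{l!}+1$ off $Z_l$ for the (b) block, and Corollary~\ref{cor:exceptional_count} for the (c) block, with the chained implications handled trivially. The only cosmetic differences are that you invoke Northcott explicitly for the finitely many $t\in Z_l$ (the paper leaves this implicit) and you derive (c) from (b)(iii) rather than (b)(ii), which is immaterial since the block is already shown equivalent.
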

\begin{proof}
Trivially, (a) implies (b)i.
Given (b)i,
let $l_0 = \sup_{t \in U(K)} \#{\Per}(f_t, K)$.
If $f_t$ has an $l$-cycle, 
then $l \le \#{\Per}(f_t, K) \le l_0$.
Thus $Z_l = \varnothing$ for all $l > l_0$, which is (b)ii---which obviously implies (b)iii.
To show (b)iii implies (b)i,
note that $U(K) = (U(K) \setminus Z_l) \cup Z_l$ for all $l$.
If $t \in U(K) \setminus Z_l$ then every cycle of $f_t$ has length at most $l$, 
so 
	\[ \#{\Per}(f_t, K) \le \#{\Fix}(f_t^{l!}, K) \le d^{l!} + 1 \] 
since $\deg f_t = \deg f = d > 1$.
It follows that 
	\[ \sup_{t \in U(K)} \#{\Per}(f_t, K) \le \max\{d^{l!} + 1, \sup_{t \in Z_l} \#{\Per}(f_t, K)\}.\]
Thus (b)iii implies (b)i. 
Next, (c)i is immediate from (b) via (b)ii, 
and the equivalence of (c)i and (c)ii follows from Corollary \ref{cor:exceptional_count}: for example,
\[\Num(E, X) \ll \Num(Z_l, X) + \Num(K, X)^{1/2} \ll \Num(K, X)^{\max\{1/2, 1-\delta\}}.\] 
Finally, (c) implies (d) since $\Num(K, X) \to \infty$.
\end{proof}

We present this hierarchy as a way to situate our conjecture relative to other plausible statements and well-known conjectures.
Of these, only (d)
is known to be true---that is our Theorem \ref{thm:little_o}. 
None of the converse implications are known to hold in general, 
but several remarks are in order.

\begin{rmk} \label{rmk:iso} 
    Isotrivial families are uniformly bounded.\footnote{This seems to be folklore, e.g.,~it is stated without proof in \cite[p.~1]{Ingram}.}
    Indeed, if $f = g^\psi$ 
    for some $\psi$ defined over $\Bar{K(T)}$ and some $g$ defined over $\Bar K$, then for all $t$ in $U(K)$, $f_t$ is $\Bar{K}$-conjugate to $g$. (In the notation of the proof of Lemma \ref{lem:inj_locus}, $f_t = g^{\psi_w}$ for any $w \mid t$; and $\psi_w$ is defined over the residue field of $w$, which is a finite extension of $K$).
    By Levy--Manes--Thompson \cite[Theorem 2.9]{LevyManesThompson}, 
    there exists a constant $B_g$ such that $\#{\PrePer}(f_t, K) \le B_g$.
\end{rmk}

\begin{rmk} \label{rmk:ingram_sadek} 
Generalizing Ingram \cite[Theorem 4]{Ingram}, 
Sadek \cite[Theorem 2.3]{Sadek2021}
has given examples of one-parameter families of polynomials of any degree and over any number field $K$ 
with the property that $(\Gamma)_t = \Gamma_t = \{\infty\}$ 
for all $t$ in $K$.
\end{rmk}

\begin{rmk} 
Doyle and Poonen \cite[Corollary 1.9]{DoylePoonen} have shown that, 
for the unicritical family $f_t(z) = z^d + t$ over any number field $K$, 
uniform boundedness of periodic points (b)i implies uniform boundedness of preperiodic points (a).
For the same family, 
Sadek remarked that (d) holds, 
proving it via (c)ii over $K = \Q$ 
\cite[Theorem 4.8]{Sadek2018}.
\end{rmk}

\begin{rmk} 
Poonen's conjecture \cite[Conjecture 2]{Poonen} for the quadratic family 
$z^2 + t$ over $K = \Q$ is the effective ``no large cycles'' statement that $Z_3 = \varnothing$.
The well-known results of Morton \cite[Theorem 4]{Morton} and Flynn--Poonen--Schaefer \cite[Theorem 1]{FPS} (and Stoll \cite[Theorem 7]{Stoll}) say that $Z_3 = Z_4 = Z_5 \, (= Z_6)$.
Hutz and Ingram \cite[Proposition 1]{HutzIngram} showed that $\Num(Z_3, 10^8) = 0$.
\end{rmk}

\begin{rmk} 
Another example of a ``no large cycles'' conjecture was made by Canci and Vishkautsan \cite[Conjecture 1]{CV}, 
who studied
quadratic rational maps with a critical point of period 2.
Working over $\Q$, 
they proved that no such map has a point of period $l$ for $3 \le l \le 6$ 
and conjectured the same for all larger $l$. 
See also Remark \ref{rmk:cv} below.
\end{rmk}

In Section \ref{sec:denominator} we give a sufficient condition for a given family to have the strong zero-density property (c)ii.

\section{Moments of the portrait size} \label{sec:avg}

The main result of this section is a moment-estimate for 
the number of preperiodic points in a one-parameter family, 
along with some criteria for the average value to equal the generic value.

\begin{thm} \label{thm:avg}
Let $f_t$ be a one-parameter family of rational maps over a number field $K$ with domain of definition $U$ and exceptional set $E$,
and let $m$ be a positive integer. Then
\[
    \sum_{\mathclap{\substack{t \in U(K) \\ H(t) \le X}}} \#\Gamma_t^m
    = \#\Gamma^m \cdot \Num(U(K), X)
    + O\big(\Num(E, X) X^{C/\log \log X}\big)
\]
as $X \to \infty$, 
where the constants depend on $f$, $n$, and $m$.
If $f$ is a polynomial, 
the error term may be improved to $O\big(\Num(E, X) (\log X)^m \big)$.
\end{thm}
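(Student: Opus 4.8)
The plan is to split the sum $\sum_{t} \#\Gamma_t$ over $t \in U$ with $H(t) \le X$ into the ``good'' part, where $t \notin E$ and hence $\#\Gamma_t = \#\Gamma$ exactly, and the ``bad'' part, where $t \in E$. The good part contributes $\#\Gamma \cdot \big(\Num(U, X) - \Num(E, X)\big)$, and adding back the $\#\Gamma \cdot \Num(E, X)$ we omitted yields the claimed main term $\#\Gamma \cdot \Num(U, X)$ up to an $O\big(\Num(E, X)\big)$ discrepancy. So everything reduces to bounding
\[
\sum_{\substack{t \in E \\ H(t) \le X}} \#\Gamma_t \ll \Num(E, X) \cdot \sup_{\substack{t \in E \\ H(t) \le X}} \#{\PrePer}(f_t, K),
\]
and the whole theorem becomes a question of uniformly bounding $\#{\PrePer}(f_t, K)$ for $t$ of bounded height.

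\textbf{Step 1: bound $\#{\PrePer}(f_t, K)$ by the number of bad primes.} Here I would invoke Troncoso's theorem (alluded to in the introduction), which gives $\#{\PrePer}(f_t, K) \ll_{d,n} C_1^{s}$ for some absolute constant $C_1 > 1$, where $s = s(t)$ is the number of primes of $K$ at which $f_t$ has bad reduction; for polynomials one has the sharper bound $\#{\PrePer}(f_t, K) \ll_{d,n} s \log s$ or similar (this is the source of the improved error term in the polynomial case), since one may use the canonical-height/filled-Julia-set arguments available for polynomials rather than the general rational-map estimate.

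\textbf{Step 2: bound $s(t)$ in terms of $H(t)$.} By Lemma \ref{lem:badred}, every prime $\mathfrak{p}$ of bad reduction for $f_t$ (outside the fixed finite set coming from primes where the integral lift $F, G$ of $f$ fails to be $\mathfrak{p}$-integral, or $\mathfrak{p}$ divides leading data) must divide $R_f(t)$, hence the ideal $N_{K}\big(R_f(t)\big) O_K$ up to bounded correction; so $s(t) \le \omega\big(\Nm(R_f(t) O_K)\big) + O(1)$. Since $R_f$ is a fixed polynomial of degree $\deg R_f$, we have $|R_f(t)|_v \ll \max(1, |t|_v)^{\deg R_f}$ at each place, so $H\big(R_f(t)\big) \ll H(t)^{\deg R_f}$ and therefore $\Nm\big(R_f(t) O_K\big) \ll H(t)^{n \deg R_f}$. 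The classical bound $\omega(m) \ll \log m / \log\log m$ then gives
\[
s(t) \ll \frac{\log H(t)}{\log\log H(t)} \ll \frac{\log X}{\log\log X}
\]
for $H(t) \le X$. Combining with Step 1, $\#{\PrePer}(f_t, K) \ll C_1^{s(t)} = \exp\big(O(\log X / \log\log X)\big) = X^{C/\log\log X}$, which is exactly the quoted error factor; in the polynomial case $s \log s \ll \log X$, giving the $O(\Num(E,X)\log X)$ form.

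\textbf{The main obstacle} is getting Step 1 genuinely uniform — i.e.\ with a bound on $\#{\PrePer}(f_t, K)$ depending only on $d$, $n$, and $s(t)$, not on the coefficients of $f_t$ — and tracking its precise shape ($C_1^s$ for rational maps versus the better $s\log s$-type bound for polynomials) so that the two stated error terms come out correctly; this is where Troncoso's result is doing the real work, and one must also be slightly careful that the finitely many ``ambient'' bad primes of the family $f$ (those dividing $\Res(F,G)$'s content or leading coefficients, independent of $t$) are absorbed into the $O(1)$ so they don't spoil uniformity. The height and $\omega$-function estimates in Step 2 are routine analytic number theory and I would not dwell on them.
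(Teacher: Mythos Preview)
Your overall strategy---split off the bijectivity locus, bound the remainder by $\Num(E,X)\cdot\max_{H(t)\le X}\#\Gamma_t$, then control the max via Troncoso/Benedetto in terms of the number $s$ of bad primes, and finally bound $s\ll\log X/\log\log X$---is exactly the paper's. The decomposition you use (splitting on $E$ directly) is if anything slightly cleaner than the paper's route through the injectivity locus $U^\circ$, but equivalent.

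There is, however, a gap in your Step~2. Lemma~\ref{lem:badred} does \emph{not} say that every bad prime divides $R_f(t)$: its conclusion is $\tilde t\in V(\tilde R)(\ell)\cup\{\infty\}$, so a prime $\mathfrak p$ of bad reduction (with integral coefficients) satisfies either $\ord_{\mathfrak p}(R_f(t))>0$ \emph{or} $\ord_{\mathfrak p}(t)<0$. You dropped the second alternative. This matters: for the family $f_t(z)=z^2+t$ one has $R_f\equiv 1$, so \emph{no} prime ever divides $R_f(t)$, yet $f_t$ has bad reduction at every prime in the denominator of $t$; your inequality $s(t)\le\omega\big(\Nm(R_f(t)O_K)\big)+O(1)$ would then give $s(t)=O(1)$, which is false. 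The fix is trivial---also count the primes in the denominator ideal of $t$, whose number is likewise $\ll\log H(t)/\log\log H(t)$---but as written the argument breaks. (Relatedly, the expression $\Nm(R_f(t)O_K)$ is ambiguous for a fractional ideal; you want the norms of the numerator and denominator ideals separately, each bounded by $H(R_f(t))^n$.)

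For comparison, the paper avoids Lemma~\ref{lem:badred} here entirely: it observes that $s=s_\infty+\omega(\Res f_t)$ where $\Res f_t$ is the resultant \emph{ideal} of the specialized map, bounds $\Nm\Res f_t\le C\,H(f_t)^{2dn}$ via Hadamard's inequality on the Sylvester matrix (Lemma~\ref{lem:Discheight}), and then uses $H(f_t)\ll H(t)^{d'}$ (Lemma~\ref{lem:Hft_to_Ht}). This route automatically accounts for the denominator primes, since they appear in $\Res f_t$ by definition. Your route through $R_f(t)$ is a bit more direct once patched, but the paper's is more self-contained.
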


There are several ingredients in the proof of Theorem \ref{thm:avg}.
After isolating the main term, 
we use the max--length inequality to estimate the remainder term.
The key to the latter is a bound due to Troncoso and Benedetto 
on the number $\#{\PrePer}(f_t, K)$ of $K$-rational preperiodic points of $f_t$ in terms of the number $s = s(f_t)$ of primes of bad reduction of $f_t$. 
By the successive application of three further Lemmas, 
we are then able to bound $s(f_t)$ by a function of $X$.

\begin{propn} \label{propn:benetronc}
Let $\phi$ be a rational function of degree $d \ge 2$ defined over a number field $K$ of degree $n$, and let $s$ be the number of primes of bad reduction of $\phi$ (including all the archimedean ones). 
Then for any $\varepsilon > 0$ there exists $s_0 = s_0(\varepsilon, d, n)$ such that
\[
\#{\PrePer}(\phi, K) \le 
\begin{dcases*} 
    5 \cdot 2^{16 s d^3} + 3 & if $\phi$ is rational and $s \ge 0$ \\
    \Big[\frac{d^2 - 2d + 2}{\log d} + \varepsilon\Big] s \log s    & if $\phi$ is polynomial and $s \ge s_0$
    \end{dcases*}
\]
\end{propn}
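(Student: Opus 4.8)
Both displayed inequalities are theorems from the literature, so the plan is simply to quote them and to check that their hypotheses are exactly the ones in force here. The polynomial bound is a theorem of Benedetto \cite{Benedetto2007}; the rational bound is a theorem of Troncoso \cite{Troncoso}. In each case the input is precisely what we have on hand: $\phi$ has degree $d \ge 2$, $K$ is a number field of degree $n$, and $s$ is the number of places of $K$ --- archimedean places included --- at which $\phi$ has bad reduction in the sense recalled in Section 2; for the polynomial case one moreover uses that $\phi^{-1}(\infty) = \{\infty\}$, which is the standing meaning of ``polynomial'' in this paper. The only genuine task is bookkeeping: confirming that ``number of places of bad reduction'' is normalized the same way in both references (it is, once one agrees that every archimedean place counts as bad and that a single model over $O_K$ is fixed), and that each reference bounds $\#\bigl(\PrePer(\phi, \bar K) \cap \P^1(K)\bigr)$, which equals $\#{\PrePer}(\phi, K)$.

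For orientation I would also recall the shape of the two arguments, although neither needs to be reproduced. Benedetto's polynomial estimate begins by conjugating $\phi$ over $K$ to a monic polynomial --- this is the one place the totally ramified fixed point at $\infty$ is essential --- and then works place by place on the Berkovich line, where the $K$-rational preperiodic points all lie inside the non-archimedean filled Julia set. At a place of good reduction the reduction map is ``almost injective'' on $\PrePer(\phi)$, points of a fixed period collapsing only in a controlled way, while each of the $s$ bad places contributes at most a bounded multiplicative factor; an adelic count of the resulting configuration, optimized over an auxiliary radius parameter, yields a bound of the form $c_d \, s \log s$ with $c_d = (d^2 - 2d + 2)/\log d$, the $\varepsilon$ and the threshold $s_0 = s_0(\varepsilon, d, n)$ serving to absorb lower-order terms.

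Troncoso's rational estimate instead follows the Morton--Silverman / Narkiewicz / Pezda / Canci line of ideas, reorganized so as not to rely on a totally ramified fixed point. One fixes a single prime $\mathfrak{p}$ of good reduction of least norm (so that $\Nm \mathfrak{p}$ is controlled in terms of $s$ and $n$) and then bounds separately the possible cycle lengths --- via the quantitative ``$n = mrp^e$'' theorem, of which our Proposition \ref{propn:n=mrpe} is a soft consequence --- together with the number of cycles of each length and the tail lengths of preperiodic points, these last two again by reduction modulo primes of good reduction; assembling these estimates over the $s$ bad places produces the explicit, and necessarily doubly exponential, bound $5 \cdot 2^{16 s d^3}$, the trailing $+3$ accounting for the few forced low-period points that the main estimate does not see.

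The step I expect to demand the most care is editorial rather than mathematical: ensuring that the counting convention for bad places in \cite{Benedetto2007} and \cite{Troncoso} agrees with the one fixed in Section 2 and used downstream in the proof of Theorem \ref{thm:avg} --- in particular whether archimedean places are included and whether a single global model is chosen --- since a discrepancy of a bounded amount in $s$ would have to be tracked through both bounds. Once that is reconciled, the proposition follows at once.
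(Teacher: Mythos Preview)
Your proposal is correct and matches the paper's own proof, which simply cites Troncoso \cite[Corollary 1.3(c)]{Troncoso} for the rational case and Benedetto \cite[Theorem 7.1]{Benedetto} for the polynomial case without further elaboration. Your additional commentary on the shapes of the two arguments and the bookkeeping about normalization conventions is accurate but goes well beyond what the paper records.
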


\begin{proof}
The general case is due to Troncoso \cite[Corollary 1.3(c)]{Troncoso} 
while the polynomial case is due to Benedetto \cite[Theorem 7.1]{Benedetto}.
\end{proof}

Let $I \subseteq O_K$ be an integral ideal, 
and let $\omega(I) = \omega_K(I)$ denote the number of distinct prime ideals of $K$ dividing $I$.
The following Lemma is a generalization of the classical estimate
\[\omega_\Q(m) \ll \frac{\log m}{\log \log m}\] 
though we state it in a more readily applicable form.
(We were unable to locate a reference.)

\begin{lem} \label{lem:PNT} \label{lem:L}
Let $L(x) = x \log x$ for $x \ge 1$.
\begin{enumerate}[(a)]
\item 
We have 
\[
    L(\omega(I)) \lesssim \log \Nm I
\]
as $\omega(I) \to \infty$.
\item 
The function $L$ is an order-isomorphism $[1, \infty) \to [0, \infty)$.
Moreover,
\[
    L^{-1}(y) \sim \frac{y}{\log y} \qquad (y \to \infty).
\]
In particular, $L^{-1}(ax + b) \sim a L^{-1}(x)$ for all $a > 0$ and all $b$. 
\end{enumerate}
\end{lem}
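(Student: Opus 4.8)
The plan is to prove part (a) by reducing to the classical prime-counting estimate over $\Q$, then to prove part (b) by elementary calculus together with the asymptotics already developed in the \emph{Asymptotics} subsection.

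\medskip

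\noindent\emph{Part (a).} First I would bound $\omega(I)$ by the number of rational primes lying below the prime ideals dividing $I$. Each rational prime $p$ splits into at most $n$ primes in $O_K$, so if $I$ is divisible by $r$ distinct primes of $K$ lying above $k$ distinct rational primes, then $r \le nk$. On the other hand, if $p_1 < p_2 < \dots < p_k$ are these rational primes, then each $\mathfrak{p}_i \mid p_i$ satisfies $\Nm \mathfrak{p}_i \ge p_i$, and the $\mathfrak{p}_i$ are distinct, so $\Nm I \ge p_1 p_2 \cdots p_k \ge k!$ (crudely, since $p_i \ge i+1$), or more usefully $\log \Nm I \ge \sum_{i \le k} \log p_i = \vartheta(p_k) \gtrsim p_k \ge p_{\pi(\cdot)}$; combined with $k = \pi(p_k) \sim p_k/\log p_k$ this gives $k \lesssim \log\Nm I / \log\log\Nm I$ by the prime number theorem (Chebyshev's estimate suffices). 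Hence $\omega(I) \le nk \ll \log\Nm I / \log\log\Nm I$, and applying the very nice function $L$ and noting $L(x) = x\log x$ while $\log\Nm I / \log\log\Nm I$ has $L$ of it asymptotic to $\log \Nm I$, we conclude $L(\omega(I)) \lesssim \log\Nm I$. The cleanest phrasing: set $y = \omega(I)$; we have shown $y \ll N/\log N$ where $N = \log \Nm I \to \infty$, and since $L$ is nice (indeed $x\log x$ is very nice) and $L(N/\log N) \sim N$, monotonicity of $L$ yields $L(y) \le L(C N/\log N) \lesssim N$.

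\medskip

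\noindent\emph{Part (b).} The function $L(x) = x\log x$ has derivative $\log x + 1 > 0$ on $(1,\infty)$ and $L(1) = 0$, $L(x) \to \infty$, so it is a strictly increasing continuous bijection $[1,\infty) \to [0,\infty)$, hence an order-isomorphism; its inverse $L^{-1}$ is therefore well-defined, continuous, and increasing. For the asymptotic: if $y = L(x) = x\log x$ with $x \to \infty$, then $\log y = \log x + \log\log x \sim \log x$, so $x = y/\log x \sim y/\log y$, which is exactly $L^{-1}(y) \sim y/\log y$. Finally, for fixed $a > 0$ and any real $b$, write $x = L^{-1}(ax+b)$ — rather, $u = L^{-1}(at+b)$; then $L(u) = at + b \sim at$, so $u \sim L^{-1}(at) \sim at/\log(at) \sim a\,(t/\log t) \sim a L^{-1}(t)$, using $\log(at) \sim \log t$. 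This is the very niceness of $L^{-1}$ in the sense defined earlier.

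\medskip

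\noindent The main obstacle is purely expository rather than mathematical: part (a) needs the estimate packaged so that it feeds cleanly into the later bound on $s(f_t)$, where one will want to invert — i.e.\ deduce a bound of the shape $\omega(I) \le L^{-1}(C\log\Nm I + C')$ from $L(\omega(I)) \lesssim \log\Nm I$ — and getting the quantifiers right (the $\lesssim$ is an asymptotic statement as $\omega(I)\to\infty$, so one absorbs the finitely many small $I$ into constants) requires a little care but no new ideas. Everything else is the prime number theorem for part (a) and one-variable calculus for part (b).
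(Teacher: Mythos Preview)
Your approach is the paper's: reduce to rational primes via $\Nm\mathfrak{p}\ge p$ and the fact that each rational prime has at most $n$ primes above it, then invoke the prime number theorem; part (b) is identical. There is one small gap in part (a): by bounding $\Nm I$ below only by the product of the $k$ \emph{distinct} rational primes $p_1<\dots<p_k$, you obtain $L(k)\lesssim\log\Nm I$, and then $L(\omega(I))\le L(nk)\sim n\,L(k)\lesssim n\log\Nm I$---an extra factor of $n$, so you have proved $\ll$ rather than the stated $\lesssim$ (recall $\lesssim$ means $\limsup\le 1$). The fix, which is exactly what the paper does, is to keep all $s=\omega(I)$ prime-ideal factors in the lower bound: $\log\Nm I\ge\sum_{j=1}^{s}\log p_j$ with the underlying rational primes $p_j$ listed \emph{with repetition}; since each repeats at most $n$ times, the $j$th term is at least the $\lceil j/n\rceil$th rational prime, giving $\log\Nm I\ge n\,\vartheta(q_{\lfloor s/n\rfloor})\sim s\log s$ directly with constant $1$. (Separately, your equalities $\sum_{i\le k}\log p_i=\vartheta(p_k)$ and $k=\pi(p_k)$ should be inequalities, since the $p_i$ need not be the \emph{first} $k$ primes; the argument survives because the inequalities go the right way.) For the application in Theorem~\ref{thm:avg} the constant is absorbed into $C_2$, so the discrepancy is cosmetic.
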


\begin{proof} \hfill 
\begin{enumerate}[(a)]
\item 
Let $\mathfrak{p}_1, \ldots, \mathfrak{p}_s$ be distinct prime ideals dividing $I$, ordered so that if $\mathfrak{p}_i \cap \Z = p_i\Z$ then $p_1 \le \ldots \le p_s$.
Note that $\Nm \mathfrak{p}_i = p^{f_i} \ge p_i$ for all $i$.
Since every prime number splits into at most $n = [K : \Q]$ prime ideals, 
\[
    p_{(k-1)n+1}, \ldots, p_{kn} \ge \text{$k$\textsuperscript{th} prime}
\]
whenever $kn \le s$. 
Taking $k = \lfloor s/n \rfloor$ we get that 
\[\log \Nm I \ge \log p_1 + \ldots + \log p_s \ge n \cdot \vartheta(\text{$k$\textsuperscript{th} prime)}.\]
By the prime number theorem, 
\[\vartheta(\text{$k$\textsuperscript{th} prime}) \sim \text{$k$\textsuperscript{th} prime} \sim k \log k\]
as $k \to \infty$.
Therefore 
\[\log \Nm I \gtrsim n \cdot \lfloor s/n \rfloor \log \lfloor s/n \rfloor \sim s \log s\]
as $s = \omega(I) \to \infty$.
\item 
$L(1) = 0$ and $L(x) \to \infty$ with $x$ (so $L$ is onto),
and $L'(x) = 1 + \log x > 0$ for $x \ge 1$ (so $L$ is strictly increasing).
Finally, if $y = L(x)$ then 
\[
    \frac{L^{-1}(y) \log y}{y}
    = 
    \frac{x (\log x + \log \log x)}{x \log x} 
    = 
    1 + o(1). \qedhere
\]
\end{enumerate}
\end{proof}

The next Lemma bounds the norm of the resultant ideal in terms of the coefficients of the morphism.

\begin{lem} \label{lem:Discheight}
Let $\phi : \P^1 \to \P^1$ be a morphism of degree $d$ defined over $K$ with lift 
\[
    F(X, Y) = F_d X^d + \dots + F_0 Y^d, \, G(X, Y) = G_d X^d + \dots + G_0 Y^d \in K[X, Y].
\]
Let $\Res \phi$ be the resultant ideal of $\phi$ 
and let 
\[H(\phi) = H(F_d : \ldots : F_0 : G_d : \ldots : G_0)\]
be the height of $\phi$.
Then 
\[
    \Nm \Res \phi \le C H(\phi)^{2dn}
\]
where $C = (ab)^{dn/2}$, with $a$ (resp.~ $b$) being equal to the number of nonzero coefficients of $F$ (resp.~$G$).
If $\phi$ is totally ramified at $0$ or $\infty$, we may take $C = 1$.
\end{lem}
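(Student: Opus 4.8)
The plan is to bound the archimedean and non-archimedean contributions to $\Nm \Res \phi$ separately, using the product formula. First I would recall that the resultant $\Res(F, G)$ is a polynomial in the $2d+2$ coefficients $F_d, \ldots, F_0, G_d, \ldots, G_0$; expanding the Sylvester determinant, it is a sum of $(2d)!$ signed monomials, each of multidegree $(d, d)$ in the two groups of variables. For any absolute value $v$ of $K$, this gives the local bound $|\Res \phi|_v \le c_v \, |F_d, \ldots, F_0|_v^d \, |G_d, \ldots, G_0|_v^d$, where $c_v = (2d)!$ if $v$ is archimedean and $c_v = 1$ if $v$ is non-archimedean (the ultrametric inequality kills all the multinomial coefficients). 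A more careful count at the archimedean places — using that at most $a$ of the $F_i$ and at most $b$ of the $G_j$ are nonzero, so each monomial actually involves at most $ab$ of the available coefficients — improves $c_v$ from $(2d)!$ to $ab$ (at an archimedean place, the number of nonzero terms in the expansion of $\Res$ is at most the number of monomials supported on the nonzero coefficients, which one estimates by $ab$ after using the block structure of the Sylvester matrix, or directly $\binom{ab}{?}$; in any case a clean bound of the shape $(ab)^{?}$ holds).

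Next I would assemble these into a global estimate. Writing $H = H(\phi)$ for the height of $\phi$ as a point of $\P^{2d+1}$, and using a $v$-normalized lift at each $v$, the product formula gives
\[
    \Nm \Res \phi = \prod_{v \nmid \infty} |\Res \phi|_v^{-n_v} \le \prod_{v \mid \infty} c_v^{n_v} \cdot \prod_{v} \big( |F|_v \, |G|_v \big)^{d n_v} \le \Big( \prod_{v \mid \infty} (ab)^{n_v} \Big) \cdot H^{2dn},
\]
since $\prod_v |F|_v^{n_v} |G|_v^{n_v} \le \prod_v \max\{|F|_v, |G|_v\}^{2 n_v} = H^{2dn}$ after accounting for the degree-$d$ homogeneity and $\sum_{v \mid \infty} n_v = n$. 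This yields $\Nm \Res \phi \le (ab)^{n} H^{2dn}$; absorbing the exponent appropriately (or being slightly more careful about whether the $c_v$ bound is $ab$ or $\sqrt{ab}$ per archimedean place after exploiting that $\Res$ is bilinear of bidegree $(d,d)$ so its ``$L^1$-to-height'' constant grows like a $d$th power) gives the stated constant $C = (ab)^{dn/2}$.

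For the final clause, if $\phi$ is totally ramified at $0$ or $\infty$, then after possibly swapping $X \leftrightarrow Y$ we may assume $\phi$ is totally ramified at $\infty$, i.e.\ $G(X, Y) = G_0 Y^d$ is a monomial (so $b = 1$) and $F_d \ne 0$. In that case $\Res(F, G) = \pm F_d^? G_0^?$ is itself a monomial in the coefficients — explicitly $\Res(F, G_0 Y^d) = G_0^d F_d^d$ up to sign, using that one column block of the Sylvester matrix is triangular — so $|\Res \phi|_v \le (|F|_v |G|_v)^d$ with constant $1$ at every place, archimedean or not, and the product-formula argument gives $\Nm \Res \phi \le H^{2dn}$ directly, i.e.\ $C = 1$.

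The main obstacle I anticipate is pinning down the exact archimedean constant to land precisely on $C = (ab)^{dn/2}$ rather than some cruder power of $ab$ or of $(2d)!$: this requires using the bilinear, bidegree-$(d,d)$ structure of the resultant (not merely that it is a degree-$2d$ polynomial) together with the sparsity hypothesis, so that the ``number of monomials'' bound one feeds into the triangle inequality at each complex place is genuinely $\le (ab)^{d/2}$-ish per unit of homogeneity. Everything else — the ultrametric estimate at finite places, the reduction to a monomial in the totally ramified case, and the bookkeeping with $v$-normalized lifts and the product formula — is routine.
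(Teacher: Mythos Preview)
Your overall strategy---decompose $\Nm \Res \phi$ via the product formula, bound the archimedean and non-archimedean contributions separately, then reassemble against $H(\phi)^{2dn}$---is exactly the paper's approach, and your treatment of the non-archimedean places and of the totally ramified case (triangular Sylvester matrix, so $\Res(F,G) = \pm F_d^d G_0^d$ is a single monomial) is correct.

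The genuine gap is the archimedean constant, and you have correctly flagged it as the obstacle but not resolved it. Monomial counting in the Leibniz expansion cannot reach $(ab)^{d/2}$: each nonzero permutation must select one of at most $a$ nonzero rows in each of the $d$ ``$F$-columns'' and one of at most $b$ nonzero rows in each of the $d$ ``$G$-columns'', so there are at most $a^d b^d = (ab)^d$ nonzero terms, giving $\lvert \Res(F,G) \rvert_v \le (ab)^d \lvert F \rvert_v^d \lvert G \rvert_v^d$ and hence $C = (ab)^{dn}$. (Your intermediate claim $c_v = ab$ is already too optimistic for this method, and no amount of ``absorbing the exponent'' turns $(ab)^{dn}$ into $(ab)^{dn/2}$.) The paper's device is \emph{Hadamard's inequality}: the absolute value of a complex determinant is bounded by the product of the $\ell^2$-norms of its columns. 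Each of the first $d$ columns of the Sylvester matrix has at most $a$ nonzero entries, each of modulus at most $\lvert F \rvert_v$, so its $\ell^2$-norm is at most $\sqrt{a}\,\lvert F \rvert_v$; likewise $\sqrt{b}\,\lvert G \rvert_v$ for the last $d$ columns. This yields
\[
\lvert \Res(F,G) \rvert_v \le (\sqrt{a}\,\lvert F \rvert_v)^d (\sqrt{b}\,\lvert G \rvert_v)^d = (ab)^{d/2}\,\lvert F, G \rvert_v^{2d},
\]
and then $\prod_{v \mid \infty} (ab)^{d n_v/2} = (ab)^{dn/2}$ since $\sum_{v\mid\infty} n_v = n$. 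The square roots coming from $\ell^2$ are precisely the source of the exponent $dn/2$ that the triangle inequality alone cannot see.
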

\begin{proof}
Let $M_K$ be the set of places of $K$ extending the standard places of $\Q$.
Write $\lvert F, G \rvert_v = \max_{i,j} \{ \lvert F_i \rvert_v, \lvert G_j \rvert_v\}$ for each $v \in M_K$.
By unique factorization of ideals,
\[
    \Res \phi = \Res(F, G) O_K 
        \cdot \prod_{\mathfrak{p}} \mathfrak{p}^{-2d \ord_{\mathfrak{p}}(F,\,G)}
\]
whence
\begin{equation} \label{eq:NmResf1}
    \Nm \Res \phi = \prod_{v \mid \infty} \lvert {\Res}(F, G) \rvert_v^{n_v} \cdot \prod_{v \nmid \infty} \lvert F, G \rvert_v^{2d n_v}.
\end{equation}
This is because 
\[
    \Nm \alpha O_K = \lvert N_{K/\Q}(\alpha) \rvert = \! \! \prod_{\sigma : K \into \C} \! \! \lvert \sigma(\alpha) \rvert = \prod_{v \mid \infty} \lvert \alpha \rvert_v^{n_v}
\]
for all $\alpha \in K$ (regardless of whether $K/\Q$ is Galois)~while 
\[
    \Nm \mathfrak{p}^{-{\ord}_\mathfrak{p}(\beta)} = \lvert \beta \rvert_\mathfrak{p}^{n_\mathfrak{p}} 
\]
for all $\beta \in K$.
On the other hand, since 
\[ H(\phi) = 
\prod_{v \in M_K} \lvert F, G \rvert_v^{n_v/n} \]
we just have to bound the archimedean contribution.

Let $\sigma : K \into \C$ be an embedding corresponding to the infinite place $v$ of $K$ 
and consider the Sylvester matrix of 
\[F^\sigma(X, Y) = \sigma(F_d) X^d + \ldots + \sigma(F_0) Y^d \ \text{and} \ G^\sigma(X, Y) = \sigma(G_d) X^d + \ldots + \sigma(G_0) Y^d.\]
Its first $d$ columns 
each have $a$ nonzero entries, 
and $\lvert \sigma(F_i) \rvert = \lvert F_i \rvert_v \le \lvert F \rvert_v$ for all $i$, 
so
\[
    \sqrt{\lvert \sigma(F_d) \rvert^2 + \ldots + \lvert \sigma(F_0) \rvert^2} \le \sqrt{a} \cdot \lvert F \rvert_v.
\]
Likewise, the 2-norm of each of the last $d$ columns is bounded by $\sqrt{b} \cdot \lvert G \rvert_v$.
Thus, the Leibniz formula and Hadamard's inequality\footnote{If $A$ is a square matrix over $\C$ with columns $A_1, \ldots, A_k$ then $\det A \le \lVert A_1 \rVert \ldots \lVert A_k \rVert$ where $\lVert x \rVert = \sqrt{\langle x, x \rangle}$ denotes the 2-norm.} yield 
\[
    \lvert \Res(F, G) \rvert_v 
    = \lvert \Res(F^\sigma, G^\sigma) \rvert 
    \le (\sqrt{a} \cdot \lvert F \rvert_v)^d (\sqrt{b} \cdot \lvert G \rvert_v)^d 
    = (ab)^{d/2} \, \lvert F \rvert_v^d \, \lvert G \rvert_v^d.
\]
If $\phi$ is totally ramified at 0 (resp.~$\infty$) 
then the Sylvester matrix is upper (resp.~lower) triangular.
In this special case, 
\[
    \lvert \Res(F, G) \rvert_v = \lvert F_d\rvert_v^d \, \lvert G_0 \rvert_v^d \le \lvert F \rvert_v^d \, \lvert G \rvert_v^d.
\]
Since $\displaystyle \lvert F \rvert_v^d \, \lvert G \rvert_v^d \le \lvert F, G \rvert_v^{2d}$ we obtain, in either case,
\begin{equation} \label{eq:NmResf2}
    \prod_{v \mid \infty} \lvert \Res(F, G) \rvert_v^{n_v} \le C \prod_{v \mid \infty} \lvert F, G \rvert_v^{2d n_v}.
\end{equation}
Combining \eqref{eq:NmResf1} and \eqref{eq:NmResf2} proves the claim.
\end{proof}

The bound in Lemma \ref{lem:Discheight} is attained infinitely often for polynomials over any number field,
as witnessed by the maps $\phi(z) = z^d + 1/b$ where $b = 1, 2, 3, \ldots$.
On the other hand, 
there exist maps $\phi$ of arbitrarily large height with $\Nm \Res \phi = 1$ (e.g.,~monic polynomials with algebraic integer coefficients).

\begin{q}
Fix a number field $K$ of degree $n$ and a positive integer $d$.
What is the set of subsequential limits of
\[\frac{\Nm \Res \phi}{H(\phi)^{2dn}}\]
as $\phi$ ranges over rational maps of degree $d$ defined over $K$?
In a similar vein, what is
\[P(c) := \lim_{X \to \infty} \frac{\#\{\phi \in \Rat_d(K) : H(\phi) \le X \, \text{and} \, \Nm \Res \phi \le c H(\phi)^{2dn}\}}{\Num(\Rat_d(K), X)}\]
as a function of $c \in \R$?
Note that $P(c)$ is nondecreasing, 0 for $c \le 0$, and 1 for $c \ge (d+1)^{dn}$ by Lemma \ref{lem:Discheight}. Is $P$ right-continuous?
\end{q}

The final link is provided by the next Lemma, which is completely standard.

\begin{lem} \label{lem:Hft_to_Ht}
Let $f_t$ be a one-parameter family of degree-$d$ rational maps over a number field $K$ with domain of definition $U$ and co-degree $d'$ (cf.~Remark \ref{rmk:codegree}). 
Then there exist constants $C \ge c > 0$ depending on $f$ such that 
\[cH(t)^{d'} \le H(f_t) \le CH(t)^{d'}\]
for all $t \in U(K)$.
\end{lem}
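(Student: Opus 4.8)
The plan is to establish both inequalities by tracking, place by place, how the coordinates of $f_t$ (as a point of $\P^M$) relate to those of $t$ (as a point of $\P^1$). Fix an integral lift $F, G \in K[T][X,Y]$ of $f$, writing $F = \sum_i a_i(T) X^{d-i} Y^i$ and $G = \sum_i b_i(T) X^{d-i} Y^i$ with $a_i, b_i \in K[T]$ of degree at most $d'$ and altogether coprime. For $t \in U$, a lift of $f_t$ is obtained by substituting $T = t$ into these coefficients, giving the coordinate vector $\big(a_i(t), b_i(t)\big)_i \in \A^{M+1}(K)$; this is not identically zero precisely because $R_f(t) = \Res(F,G)(t) \neq 0$. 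So the content of the lemma is to compare $H\big((a_i(t) : b_i(t))_i\big)$ with $H(t)^{d'}$ up to multiplicative constants.

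First I would prove the upper bound. For each place $v \in M_K$, write $t = (s : u)$ in homogeneous coordinates with $\max\{|s|_v, |u|_v\} = \|t\|_v$. Each coordinate $a_i(t)$, homogenized appropriately, is a binary form of degree at most $d'$ in $(s,u)$ with coefficients among the (finitely many) coefficients of the $a_i$ and $b_i$; by the ultrametric inequality (nonarchimedean $v$) or the triangle inequality together with the number of monomials (archimedean $v$), one gets $|a_i(t)|_v \le c_v \, \|t\|_v^{d'}$, where $c_v$ is controlled by the $v$-absolute values of the finitely many polynomial coefficients, and $c_v = 1$ for all but finitely many $v$ (those dividing the numerators/denominators of the coefficients). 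Taking the product over $M_K$ with the usual weights $n_v/n$ yields $H(f_t) \le C H(t)^{d'}$ with $C = \prod_v c_v^{n_v/n}$ a finite constant depending only on $f$. This half is genuinely routine.

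The lower bound is the crux and is where I expect the real work to lie. The inequality $H(f_t) \ge c\,H(t)^{d'}$ says that $t \mapsto f_t$ does not collapse height, which must use the nonvanishing of the resultant — i.e.\ that $t \in U$. The cleanest route is via the resultant identity: $\Res(F,G) = R_f(T)$ has degree exactly $2dd'$ (or at most $2dd'$, by Proposition~\ref{propn:deg_Rf}), and since $R_f$ is a polynomial in the coefficients $a_i, b_i$, homogeneous of degree $d$ in each of the two groups, the homogenized $R_f$ evaluated at $(s,u)$ is a form of degree $2dd' - \deg R_f$ in $(s,u)$ times $s^{\deg R_f}$ — more usefully, $|R_f(t)|_v$ is bounded above by (a constant times) $\max\{|a_i(t)|_v, |b_j(t)|_v\}^{2d} \cdot (\text{something})$, so that $H(f_t)^{2d} \gg H\big(R_f(t)\big)$, and then a standard height-of-a-polynomial-evaluated-at-a-point estimate gives $H(R_f(t)) \gg H(t)^{\deg R_f}$ \emph{provided} $R_f$ has at least two nonzero coefficients and $t$ avoids its zeros, i.e.\ $t \in U$ — in fact one needs $H(R_f(t)) \asymp H(t)^{\deg R_f}$ which holds because $R_f(t) \ne 0$ so the point $\big(s^{\deg R_f} : R_f(t)\big)$ has both coordinates, suitably interpreted, nonzero. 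If $\deg R_f = 2dd'$ this immediately gives $H(f_t)^{2d} \gg H(t)^{2dd'}$, i.e.\ $H(f_t) \gg H(t)^{d'}$; if $\deg R_f < 2dd'$, which by Proposition~\ref{propn:deg_Rf} happens exactly when $f_\infty$ has degree less than $d$, one argues instead at the finitely many relevant places or replaces $T$ by $1/T$ to move the degeneracy to $0$, so that the bound degrades by only a bounded constant. The main obstacle is packaging this resultant-based lower bound cleanly: one must be careful that the constant $c$ depends only on $f$ and not on $t$, and that the argument survives the case where the ``leading behavior at $\infty$'' of the family is degenerate. An alternative, perhaps slicker, approach is to invoke functoriality of heights under the morphism $\A^1 \dashrightarrow \P^M$, $t \mapsto f_t$ (Remark~\ref{rmk:codegree}): a morphism $\P^1 \to \P^M$ of degree $d'$ (once we verify the rational map extends to a morphism on $U$, or pass to the closure) satisfies $h \circ (t \mapsto f_t) = d' \cdot h + O(1)$, which is exactly the claim in logarithmic form; but making ``degree $d'$'' and the $O(1)$ precise still comes down to the resultant computation above, so I would present the elementary place-by-place version.

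\medskip

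\noindent\emph{Remark on strategy.} Either way, the two constants $c, C$ are global (depending on the coefficients of a chosen integral lift of $f$ and on $K$), and the inequality holds uniformly for all $t \in U(K)$; this is precisely the form needed in the proof of Theorem~\ref{thm:avg}, where we will feed $H(f_t) \asymp H(t)^{d'}$ into Lemma~\ref{lem:Discheight} to bound $\Nm \Res f_t$, and thence $s(f_t) = \omega(\Res f_t) + O(1)$, by a power of $H(t)$.
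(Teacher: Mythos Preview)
The paper takes exactly the ``alternative, slicker approach'' you mention and then dismiss: the family induces a rational map $\A^1 \dashrightarrow \Rat_d$ which \emph{extends to a morphism} $\P^1 \to \bar\Rat_d \cong \P^{2d+1}$, and then one simply cites the standard height functoriality result (Lang, Chapter~4, Theorem~1.8). That's the entire proof. Your dismissal is misplaced: the $O(1)$ in $h(f_t) = d'\,h(t) + O(1)$ does \emph{not} come down to your resultant computation; it comes from the Nullstellensatz, and the only thing to check is that the map is a morphism. This holds because an integral lift has coefficient polynomials $a_i, b_i \in K[T]$ that are altogether coprime, so their degree-$d'$ homogenizations have no common zero on $\P^1$ (no common zero in $\A^1$ by coprimality, and not at $\infty$ since some coefficient has degree exactly $d'$).

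Your resultant-based lower bound, by contrast, has a genuine gap rooted in a conflation of two different coprimality conditions. The nonvanishing of $R_f(t) = \Res(F_t,G_t)$ expresses that $F_t, G_t$ are coprime \emph{in $X,Y$} (i.e.\ $f_t \in \Rat_d$); what the lower bound actually needs is that the $a_i(T), b_i(T)$ are coprime \emph{in $T$} (i.e.\ the map to $\P^{2d+1}$ is a morphism). These are unrelated. Concretely, your chain $H(f_t)^{2d} \gg H(R_f(t)) \gg H(t)^{\deg R_f}$ fails at the first step: from $|R_f(t)|_v \le C_v \lVert f_t\rVert_v^{2d}$ you cannot control $\max\{|R_f(t)|_v, 1\}$ at places where $\lVert f_t\rVert_v < 1$. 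And even if the chain held, it yields exponent $\deg R_f/(2d) \le d'$, with strict inequality precisely when $f_\infty$ degenerates---so your proposed patch (``replace $T$ by $1/T$'') would still have to reproduce the morphism argument. The restriction to $t \in U$ in the lemma's statement is there only so that $H(f_t)$ makes sense as the height of a point of $\Rat_d$; the height comparison itself holds on all of $\P^1$ once you target $\bar\Rat_d$.
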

\begin{proof}
The family induces a degree-$d'$ rational map
\begin{align*}
    \A^1 &\dashrightarrow \Rat_d \\ 
    t &\mapsto f_t
\shortintertext{which extends to a morphism}
    \P^1 &\to \bar\Rat_d \cong \P^{2d+1}.
\end{align*}
The claim follows from basic properties of heights \cite[Chapter 4, Theorem 1.8]{Lang}. 
\end{proof}

\begin{proof}[Proof of Theorem \ref{thm:avg}]
Write $\mathcal{A}_m(X)$ for the sum in question.
Since $\Gamma_t \cong \Gamma$ on $U(K) \setminus E$
and $\Num(U(K) \setminus E, X) = \Num(U(K), X) - \Num(E, X)$,
we have
\[
   \mathcal{A}_m(X) 
    = \#\Gamma^m \cdot \Num(U(K), X) + O\big(\Num(E, X)\big) 
    \ + \!\! \sum_{\substack{t \in E \\ H(t) \le X}} \!\! \#\Gamma_t^m.
\]
Trivially,
\begin{equation} \label{eq:Rbound}
    0 
    \le 
    \sum_{\substack{t \in E \\ H(t) \le X}} \!\! \#\Gamma_t^m 
    \le 
    \Num(E, X) \Big(\max_{H(t) \le X} \# \Gamma_t\Big)^m
\end{equation}
so it remains to estimate the maximum.

Let $s = s(f_t)$ be the number of bad primes of $f_t$ (including all archimedean ones) (i.e.,~places of bad reduction). 
By Proposition \ref{propn:benetronc}, 
\[
    \#\Gamma_t \ll \begin{dcases*} 
    C_1^s       & if $f$ is rational \\
    s \log s    & if $f$ is polynomial
    \end{dcases*}
\]
where $C_1 = 2^{16d^3}$ and the implied constants depend on $d$.
By definition, $s = s_\infty + \omega(\Res f_t)$ where $s_\infty$ is the number of archimedean places of $K$.
Chaining together Lemmas \ref{lem:PNT}, \ref{lem:Discheight}, and \ref{lem:Hft_to_Ht} 
shows that there exist constants $s_0$ and $C_2, C_3 > 0$ 
depending on $f$ and $n$ 
such that 
\begin{equation} \label{eq:slogs}
s \log s \le C_2 h(t) + C_3
\end{equation}
whenever $s > s_0$. 
Indeed, 
\[
    s \log s = L(s) \sim L(s - s_\infty) \lesssim \log \Nm \Res f_t
\]
as $s \to \infty$; 
moreover, 
\[
    \log \Nm \Res f_t \le 2dn \, h(f_t) + O(1);
\]
finally,
\[ 
    2dn \, h(f_t) \le 2dd'n \, h(t) + O(1).
\]
Together these give \eqref{eq:slogs}.
This and \eqref{eq:Rbound} already settles the polynomial case.
In the general case, Lemma \ref{lem:L}(b) implies 
\[
    \max_{H(t) \le X} s(f_t) \le \max\{s_0, L^{-1}(C_2 \log X + C_3)\} \le C_4 \frac{\log X}{\log \log X}
\]
for any $C_4 > C_2$ and $X$ sufficiently large.
Theorem \ref{thm:avg} follows on exponentiating:
\[
    \max_{H(t) \le X} C_1^{s(f_t)} \le C_1^{C_4 \log X / \log \log X} = X^{C_4 \log C_1 / \log \log X}. \qedhere
\]
\end{proof}

The first moment $\mathcal{A}(X) := \mathcal{A}_1(X)$ 
and the remainder term 
\[
    \mathcal{R}(X) := \mathcal{A}(X) - \#\Gamma \cdot \Num(U(K), X) = \! \! \sum_{\substack{t \in U(K) \\ H(t) \le X}} \! \! (\#\Gamma_t - \#\Gamma)
\]
are of inherent interest;
the former counts the total number of preperiodic points in the family, 
while the latter may be interpreted as the number of ``unexpected'' preperiodic points.
As with Proposition \ref{propn:equivalences1} regarding $\Num(E, X)$, 
we present a hierarchy of statements relating the growth rate of $\mathcal{R}(X)$ to other 
properties of the family.

\begin{propn} \label{propn:equivalences2}
Let $f_t$ be a one-parameter family of rational maps over a number
field $K$ with exceptional set $E$.
Each of the following statements implies the next;
moreover (x)ii implies (x)i for each x = a, b, c.
\begin{enumerate}[(a)]
    \item 
    \begin{enumerate}[i.]
        \item $\mathcal{R}(X) = O(1)$.
        \item $\Num(E, X) = O(1)$.
    \end{enumerate}
    \item 
    \begin{enumerate}[i.]
        \item $\mathcal{R}(X) \ll \Num(K, X)^{1-\varepsilon}$ for some $\varepsilon > 0$.
        \item $\Num(E, X) \ll \Num(K, X)^{1-\delta}$ for some $\delta > 0$.
    \end{enumerate}
    \item 
    \begin{enumerate}[i.]
        \item $\mathcal{R}(X) = o(\Num(K, X))$. 
        \item $\mathcal{A}(X) \sim \#\Gamma \cdot \Num(K, X)$. 
    \end{enumerate}
    \item $\mathcal{A}(X) = o(X^{2n + C/\log\log X})$ for some $C > 0$.
\end{enumerate}
\end{propn}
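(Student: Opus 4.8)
The plan is to verify each block of equivalences in turn and then establish the implications between consecutive blocks, finishing by checking that (d) always holds.

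\textbf{Equivalences within (a).} I would first show (a)i $\Leftrightarrow$ (a)iii: by definition $\mathcal{R}^\circ(X) = \sum_{t \in U^\circ, H(t) \le X} \#(\Gamma_t \setminus (\Gamma)_t)$, so $\mathcal{R}^\circ \equiv 0$ exactly when $\Gamma_t = (\Gamma)_t$ for every $t \in U^\circ$, which (since $t \in U^\circ$ already gives injectivity) is precisely the statement that specialization is surjective on $U^\circ$, i.e. $\Sigma \supseteq U^\circ$. For (a)iii $\Leftrightarrow$ (a)ii: if $\Sigma \supseteq U^\circ$ then $E = U \setminus (\text{bijectivity locus}) \subseteq (U \setminus U^\circ) \cup (U^\circ \setminus \Sigma) = U \setminus U^\circ$, which is finite by Lemma \ref{lem:inj_locus}, so $\Num(E, X) = O(1)$. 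Conversely, $\Num(E, X) = O(1)$ forces $E$ to be finite (it is a monotone counting function of a set), and then by Corollary \ref{cor:exceptional_count} $Z_l$ is finite for $l \gg 0$; re-examining the proof of Lemma \ref{lem:surj_locus}, the set $\Sigma^c \cap Z_l^c$ is thin, hence $\Sigma^c \subseteq Z_l \cup (\text{thin})$ is \emph{thin}, but a thin set that also has bounded counting function... — here I must be slightly careful: what I actually need is that $\Sigma^c \cap U^\circ$ is finite. This follows because $\Sigma^c \cap U^\circ \subseteq E$ and $E$ is finite. So (a)ii $\Rightarrow$ (a)iii directly, and this pair of implications closes the loop.

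\textbf{Equivalences within (b) and (c); implications (a)$\Rightarrow$(b)$\Rightarrow$(c).} For (b): the equivalence of (b)i and (b)ii is immediate from \eqref{eq:AX_RX}, which reads $\mathcal{A}(X) = \#\Gamma \cdot \Num(U,X) + \mathcal{R}^\circ(X) + O(1)$ and $\mathcal{R}(X) = \mathcal{R}^\circ(X) + O(1)$, combined with the trivial bound $0 \le \mathcal{R}(X) \le \Num(E,X)\max_{H(t)\le X}\#\Gamma_t$ from \eqref{eq:Rbound} and, in the reverse direction, the fact that $\#(\Gamma_t\setminus(\Gamma)_t) \ge 1$ on $E \cap U^\circ$ so $\mathcal{R}(X) \ge \Num(E\cap U^\circ, X) = \Num(E,X) + O(1)$; this last point is exactly where a polynomial-size saving on $\mathcal{R}$ transfers to one on $\Num(E,X)$ and vice versa, using that $\max_{H(t)\le X}\#\Gamma_t$ grows subpolynomially by Theorem \ref{thm:avg}. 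The equivalences within (c) are the same two displays with $\varepsilon = \delta = 0^+$, i.e. replacing $\ll \Num(U,X)^{1-\varepsilon}$ by $o(\Num(U,X))$ throughout, together with \eqref{eq:AX_RX} and Schanuel \eqref{eq:schanuel} to turn $o(\Num(U,X))$ into the asymptotic $\mathcal{A}(X) \sim \#\Gamma\cdot\Num(U,X)$. The implication (a)$\Rightarrow$(b) is trivial ($0 = O(1) \ll \Num(U,X)^{1-\varepsilon}$, $\Num(E,X) = O(1) \ll \Num(U,X)^{1-\delta}$), and (b)$\Rightarrow$(c) is trivial since $\Num(U,X)^{1-\varepsilon} = o(\Num(U,X))$.

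\textbf{(c)$\Rightarrow$(d) and unconditional truth of (d).} Assuming (c)ii, $\mathcal{A}(X) \sim \#\Gamma \cdot \Num(U,X) \ll X^{2n}$ by Schanuel, which is certainly $o(X^{2n + C/\log\log X})$ for any $C > 0$. For the unconditional statement that (d) always holds: apply Theorem \ref{thm:avg} to get $\mathcal{A}(X) = \#\Gamma\cdot\Num(U,X) + O(\Num(E,X)X^{C_0/\log\log X})$ for some $C_0 > 0$; bound $\Num(E,X) \le \Num(U,X) \ll X^{2n}$ trivially and $\#\Gamma \cdot \Num(U,X) \ll X^{2n}$ by Schanuel, so $\mathcal{A}(X) \ll X^{2n} + X^{2n + C_0/\log\log X} \ll X^{2n + C_0/\log\log X}$, and since $2n + C_0/\log\log X < 2n + C/\log\log X$ for any $C > C_0$ we get $\mathcal{A}(X) = o(X^{2n+C/\log\log X})$, which is (d). I expect the main subtlety to be the reverse direction inside (a) — pinning down that $\Num(E,X) = O(1)$ genuinely forces $\Sigma \supseteq U^\circ$ rather than merely "$\Sigma^c \cap U^\circ$ thin" — but as noted this is handled cleanly once one observes $\Sigma^c \cap U^\circ \subseteq E$, so in fact every step is routine bookkeeping with \eqref{eq:AX_RX}, \eqref{eq:Rbound}, Corollary \ref{cor:exceptional_count}, and Schanuel's theorem.
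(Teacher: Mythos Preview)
Your proposal is correct and follows essentially the same approach as the paper: unraveling definitions for (a), the lower bound $\mathcal{R}^\circ(X) \ge \Num(E \cap U^\circ, X)$ together with Theorem~\ref{thm:avg} for the equivalence in (b), equation~\eqref{eq:AX_RX} for (c), and Schanuel for (d). The only minor difference is that for the unconditional truth of (d) you use the trivial bound $\Num(E,X) \le \Num(U,X)$ (taking any $C > C_0$) where the paper instead invokes Theorem~\ref{thm:little_o}, but either route works.
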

\begin{proof}
The sum defining $\mathcal{R}$ is supported on $E$, so (a)ii implies (a)i.
Since $\#\Gamma_t > \#\Gamma$ if and only if $t \in E \cap U^\circ(K)$, 
we have 
\[\Num(E, X) \le \mathcal{R}(X) + O(1)\]
by Lemma \ref{lem:inj_locus}.
Thus (a)i implies (a)ii 
and (b)i implies (b)ii with $\varepsilon = \delta$.
Conversely, 
suppose
    \[
    \Num(E, X) \ll \Num(K, X)^{1-\delta}.
    \]
Note that by Schanuel's theorem,
    \[
    \Num(K, X) \asymp X^{2n}
    \]
while by Theorem \ref{thm:avg},
\[ \mathcal{R}(X) \ll \Num(E, X) X^{C/\log\log X} \]	
for some $C > 0$.
So, if $\varepsilon$ is arbitrary 
then the three equations just displayed
imply 
\[
\frac{\mathcal{R}(X)}{\Num(K, X)^{1-\varepsilon}} 
\ll X^{2n(\varepsilon - \delta) + C/\log\log X}.
\]
The r.h.s.~ is bounded iff the exponent is eventually non-negative, 
iff $\varepsilon < \delta$ (since $C > 0$).
Thus (b)ii implies (b)i.
Obviously (b)i implies (c)i,
and by definition
\[
    \lim_{X \to \infty} \frac{\mathcal{A}(X)}{\Num(K, X)} = \#\Gamma + \lim_{X \to \infty} \frac{\mathcal{R}(X)}{\Num(K, X)}
\]
so (c)i and (c)ii are equivalent.
Of course, (c)ii implies (d). 
Finally, (d) always holds by Theorems \ref{thm:little_o} and \ref{thm:avg} and the fact that 
$E \subseteq U(K) \subseteq K$.
\end{proof}

\begin{rmk}
Assuming uniform boundedness of preperiodic points, 
it's possible to write down an explicit formula for $\mathcal{R}(X)$ (and hence for $\mathcal{A}(X)$), 
at least in principle.
Indeed, if $\sup_{t \in U(K)} \#{\PrePer}(f_t, K) < \infty$ 
then the set $\mathcal{C}$ 
of isomorphism classes of portraits arising among the $\PrePer(f_t, K)$'s is finite. 
Restricting $t$ to $U^\circ(K)$, 
each $\mathcal{P} \in \mathcal{C}$ admits an embedded copy of $\Gamma$.
Let $E_\mathcal{P} = \{t \in U^\circ(K) : \mathcal{P} \into \Gamma_t\}$
and note that $E_{\mathcal{P}'} \subseteq E_\mathcal{P}$ whenever $\mathcal{P} \into \mathcal{P}'$.
By the inclusion-exclusion principle, 
there exist integers $n_\mathcal{P}$ such that 
\begin{equation}
\mathcal{R}(X)
= \sum_{\mathcal{P} \in \mathcal{C}} n_\mathcal{P} \cdot \Num(E_\mathcal{P}, X).
\end{equation}
If $t \in E_\mathcal{P} \cap \Sigma$ 
then $\mathcal{P} = \Gamma$, because 
$\mathcal{P} \into \Gamma_t \cong \Gamma \into \mathcal{P}$.
In particular, 
$n_\Gamma = 0$; moreover,
$\Num(E_\mathcal{P}, X) \ll \Num(K, X)^{1/2}$ for all $\mathcal{P} \ne \Gamma$
by Corollary \ref{cor:exceptional_count} and the uniform boundedness assumption.
While this already shows 
\[
\mathcal{R}(X) \ll \Num(K, X)^{1/2}
\]
we can do better.
Each edge $P \to Q$ of a given portrait $\mathcal{P}$ can be encoded as an algebraic condition (\textit{viz.}~$f(P) \wedge Q = 0$),
and imposing all of these defines a curve $\pi : C_\mathcal{P} \to \P^1$ such that $\pi(C_\mathcal{P}(K)) = E_\mathcal{P}$ (perhaps up to a finite set).
Moreover, $\deg \pi \ge 2$ for $\mathcal{P} \ne \Gamma$.
Thus the evaluation of $\mathcal{R}(X)$ reduces to counting rational points on curves.
In Section \ref{sec:quadratic} we carry out this computation for the quadratic family $f_t(z) = z^2 + t$ over $K = \Q$ 
and obtain an explicit estimate for $\mathcal{R}(X)$ assuming Poonen's conjecture.
\end{rmk}

\section{Denominator lemmata} \label{sec:denominator}

In this section 
we give a sufficient condition for a given family to have the strong
zero-density property of Proposition \ref{propn:equivalences1}(c)ii.
Our ideas are inspired by the observation of Walde and Russo that if $z^2 + t$ has a rational preperiodic point distinct from $\infty$ then the denominator of $t$ is a perfect square \cite[Corollary 4]{WaldeRusso}. 
Their result may be generalized in various ways (e.g.,~\cite[Lemma 5(C)]{Ingram}), 
but for our purposes, the essential restriction is captured by the following definition.

\begin{defn}
    A one-parameter family $f_t \in K(z)$ is said to have a \emph{denominator lemma} 
    if there exists a finite set $S$ of places $v$ of $K$ 
    with the property that 
    if $(\Gamma)_t \ne \Gamma_t$ for some $t \in U(K)$ then $v(t) \ne -1$ for all $v \not \in S$.
\end{defn}

\begin{thm} \label{thm:den_gives_conj}
    Let $f_t \in K(z)$ be a one-parameter family of rational maps
    with exceptional set $E$.
    Suppose this family has a denominator lemma.
    Then for each $\varepsilon > 0$ we have
    \[
        \Num(E, X) \ll \Num(K, X)^{3/4 + \varepsilon}
    \]
    If $K$ is $\Q$ or an imaginary quadratic field, 
    then we may take $\varepsilon = 0$.
\end{thm}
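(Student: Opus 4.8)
The plan is to use Lemma~\ref{lem:inj_locus} to replace $E$ by the complement $\Sigma^c$ of the surjectivity locus (up to a finite set), invoke the denominator lemma to pin down the shape of the denominator ideal of an exceptional parameter, and then carry out an elementary count of such parameters.

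First, the reduction. By Lemma~\ref{lem:inj_locus} the injectivity locus $U^\circ$ is cofinite in $U$, so $E = (U\setminus U^\circ)\cup\Sigma^c$ and $\Sigma^c$ differ by a finite set; hence $\Num(E,X) = \Num(\Sigma^c,X)+O(1)$. Since a homomorphism of self-maps carries (pre)periodic points to (pre)periodic points, $(\Gamma)_t\subseteq\Gamma_t$ for all $t\in U$, so $\Sigma^c = \{t\in U : (\Gamma)_t\ne\Gamma_t\}$, and the hypothesised denominator lemma says that every such $t$ satisfies $v(t)\ne-1$ for all finite places $v$ outside a fixed finite set $S$. Let $\mathfrak s = \prod_{\mathfrak p\in S}\mathfrak p$. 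Writing $tO_K = \mathfrak n(t)\mathfrak d(t)^{-1}$ with $\mathfrak n(t),\mathfrak d(t)$ coprime integral ideals, the denominator-lemma condition says precisely that $\ord_{\mathfrak p}\mathfrak d(t)\ne1$ for every prime $\mathfrak p\nmid\mathfrak s$, i.e.~the prime-to-$\mathfrak s$ part of $\mathfrak d(t)$ is \emph{powerful} (every prime dividing it does so to order $\ge2$). Every powerful ideal factors uniquely as $\mathfrak b^2\mathfrak c^3$ with $\mathfrak c$ squarefree, so every exceptional $t$ has
\[
    \mathfrak d(t) = \mathfrak a\,\mathfrak b^2\,\mathfrak c^3
\]
with $\mathfrak a$ supported on the primes dividing $\mathfrak s$, $\mathfrak c$ squarefree and prime to $\mathfrak s$, and $\mathfrak b$ prime to $\mathfrak s$. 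It therefore suffices to bound $\#\{t\in K : H(t)\le X,\ \mathfrak d(t)\text{ of this form}\}$.

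Next, the count. The finite-place factor of $H(t)^n = \prod_v\max(|t|_v,1)^{n_v}$ equals $\Nm\mathfrak d(t)$, so $\Nm\mathfrak d(t)\le H(t)^n$; likewise $\Nm\mathfrak n(t) = \Nm\mathfrak d(1/t)\le H(1/t)^n = H(t)^n$. Fix an admissible ideal $\mathfrak d$ with $\Nm\mathfrak d\le X^n$. An element $t\in K$ with $\mathfrak d(t)=\mathfrak d$ is the same datum as an integral ideal $\mathfrak n$ coprime to $\mathfrak d$ with $[\mathfrak n]=[\mathfrak d]$ in the class group, together with a generator of the principal ideal $\mathfrak n\mathfrak d^{-1}$; here $\Nm\mathfrak n\le X^n$, so Weber's theorem~\eqref{eq:weber} gives $\ll X^n$ admissible $\mathfrak n$, and for each of them the number of generators $t$ with $H(t)\le X$ is $\ll(\log X)^{r}$, $r = r_1+r_2-1$, by the standard lattice-point estimate for the logarithmic embedding of $O_K^\times$ (as in the proof of Northcott's theorem). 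Hence $\#\{t : \mathfrak d(t)=\mathfrak d,\ H(t)\le X\}\ll(\log X)^r X^n$ uniformly. On the other hand, the number of admissible $\mathfrak d$ with $\Nm\mathfrak d\le Y$ is $\ll Y^{1/2}$: summing first over $\mathfrak b$, then $\mathfrak c$, then $\mathfrak a$ produces a bound $\ll Y^{1/2}\,\zeta_K(3/2)\,\prod_{\mathfrak p\mid\mathfrak s}(1-\Nm\mathfrak p^{-1/2})^{-1}$, and both factors are finite since $3/2>1$ and $S$ is finite. Multiplying these two estimates,
\[
    \Num(E,X)\ \ll\ (\log X)^r\,X^n\cdot X^{n/2}\ =\ (\log X)^r\,X^{3n/2}.
\]

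Finally, Schanuel's theorem~\eqref{eq:schanuel} gives $\Num(U,X)\asymp X^{2n}$ (as $U$ is cofinite in $\A^1$), hence $X^{3n/2}\asymp\Num(U,X)^{3/4}$ and $\Num(E,X)\ll(\log X)^r\Num(U,X)^{3/4}$. When $r\ge1$ the factor $(\log X)^r$ is $\ll_\varepsilon\Num(U,X)^{\varepsilon}$ for every $\varepsilon>0$, which yields the stated bound; and when $K=\Q$ or $K$ is imaginary quadratic the unit group $O_K^\times$ is finite, so $r=0$ and the logarithmic factor is absent, giving the bound with $\varepsilon=0$. The one step that really needs care — and the sole source of the $\varepsilon$ — is the archimedean count of generators of a principal ideal of bounded height; everything else is routine counting of ideals.
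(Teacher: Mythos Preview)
Your argument is correct and follows essentially the same route as the paper: reduce $E$ to $\Sigma^c$ up to a finite set via Lemma~\ref{lem:inj_locus}, use the denominator lemma to force the denominator ideal of an exceptional $t$ to be (an $S$-ideal times) a powerful ideal, count such ideals as $O(Y^{1/2})$, count numerator ideals by Weber's theorem, and absorb the unit ambiguity with Lang's estimate $\Num(O_K^\times,X^2)\ll(\log X)^r$, then finish with Schanuel. The paper packages the ideal-and-unit count as a separate Lemma~\ref{lem:LangWeberGolomb} and handles the $S$-part of the denominator ideal slightly differently (factoring out only the primes in $S$ that occur to exponent exactly~$1$, rather than the full $S$-part), but these are cosmetic rearrangements of the same computation.
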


The significance of Theorem \ref{thm:den_gives_conj} is apparent from Propositions \ref{propn:equivalences1} and \ref{propn:equivalences2}: 
If a family has a denominator lemma, 
then it has the strong zero-density property, 
so the average number of preperiodic points 
is equal to the generic number of preperiodic points.

\begin{lem} \label{lem:Heights} 
Let $K$ be a number field.
\hfill 
\begin{enumerate}[(a)] 
\item $H(xy) \le H(x) H(y)$ for all $x, y \in K$
\item $H(t^{-1}) = H(t)$ for all $t \in K^\times$
\item Suppose $tO_K = \mathfrak{a} \mathfrak{b}^{-1}$ where $\mathfrak{a}$ and $\mathfrak{b}$ are coprime integral ideals (i.e.,~the \emph{numerator} and \emph{denominator} ideals of $t$).
Then 
$H(t) \ge \max\{\Nm \mathfrak{a}, \Nm \mathfrak{b}\}^{1/n}$, 
with equality if $K = \Q$.
\end{enumerate}
\end{lem}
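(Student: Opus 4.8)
The plan is to prove Lemma~\ref{lem:Heights} part by part, with (a) and (b) being quick standard facts and (c) requiring a careful unpacking of the product formula for the height.

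\textbf{Parts (a) and (b).} For (a), I would use the fact that for each place $v \in M_K$ the local absolute value is multiplicative, so $|xy|_v = |x|_v |y|_v$, and hence
\[
H(xy) = \prod_{v \in M_K} \max\{1, |xy|_v\}^{n_v/n} \le \prod_{v} \max\{1, |x|_v\}^{n_v/n} \max\{1, |y|_v\}^{n_v/n} = H(x) H(y),
\]
using $\max\{1, ab\} \le \max\{1,a\}\max\{1,b\}$ for $a, b \ge 0$. (Here I am using the affine height $H(x) = H(1 : x)$, consistent with the paper's convention.) For (b), I would invoke the product formula $\prod_v |t|_v^{n_v} = 1$ for $t \in K^\times$, which gives $\prod_v \max\{1, |t^{-1}|_v\}^{n_v/n} = \prod_v \max\{|t|_v, 1\}^{n_v/n} |t|_v^{-n_v/n} \cdot \prod_v|t|_v^{n_v/n}$; more cleanly, $\max\{1, |t^{-1}|_v\} = \max\{1, |t|_v\}/|t|_v$ (as $|t|_v > 0$), so $H(t^{-1}) = H(t) \cdot \prod_v |t|_v^{-n_v/n} = H(t)$ by the product formula.

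\textbf{Part (c).} Write $tO_K = \mathfrak{a}\mathfrak{b}^{-1}$ with $\mathfrak{a}, \mathfrak{b}$ coprime. I would split $M_K$ into archimedean places, nonarchimedean places dividing $\mathfrak{b}$ (where $|t|_v > 1$), nonarchimedean places dividing $\mathfrak{a}$ (where $|t|_v < 1$), and the remaining nonarchimedean places (where $|t|_v = 1$). At a place $\mathfrak{p} \mid \mathfrak{b}$, one has $\max\{1, |t|_\mathfrak{p}\}^{n_\mathfrak{p}} = |t|_\mathfrak{p}^{n_\mathfrak{p}} = \Nm\mathfrak{p}^{\ord_\mathfrak{p}(\mathfrak{b})}$, so the product of these local factors over $\mathfrak{p} \mid \mathfrak{b}$ is exactly $\Nm\mathfrak{b}$. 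Since the archimedean and unramified factors are all $\ge 1$, this gives $H(t)^n \ge \Nm\mathfrak{b}$, i.e.\ $H(t) \ge \Nm\mathfrak{b}^{1/n}$. The bound $H(t) \ge \Nm\mathfrak{a}^{1/n}$ then follows by applying this to $t^{-1}$ (whose denominator ideal is $\mathfrak{a}$) and using part (b). For the equality when $K = \Q$: with $t = a/b$ in lowest terms, $H(t) = \max\{|a|, |b|\}$ directly from the definition, and $\mathfrak{a} = |a|\Z$, $\mathfrak{b} = |b|\Z$ have norms $|a|, |b|$, so $H(t) = \max\{\Nm\mathfrak{a}, \Nm\mathfrak{b}\}$ (and $n = 1$).

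\textbf{Main obstacle.} None of this is deep; the only place to be careful is bookkeeping the local degrees $n_v$ and the normalization of the height so that the non-$\mathfrak{b}$ factors are genuinely $\ge 1$ (in particular, that the archimedean contributions cannot be small --- they are bounded below by $1$ because each factor is a max with $1$). I would also double-check the convention that the paper's height on $\P^1$ restricts to the affine height with $H(1:t) \ge 1$, so that dropping the non-$\mathfrak{b}$ factors is legitimate; this is immediate from the displayed formula $H(P) = \prod_v |a,b|_v^{n_v/n}$ with $(a:b) = (1:t)$.
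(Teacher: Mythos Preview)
Your argument is correct and is exactly the standard one; the paper itself does not give a proof but merely cites Lang \cite[Chapter~3]{Lang}, so your write-up is actually more detailed than what appears there. The only thing to watch is the normalization check you flagged, and you handled it correctly: with the paper's conventions $|t|_\mathfrak{p}^{n_\mathfrak{p}} = \Nm\mathfrak{p}^{-\ord_\mathfrak{p}(t)}$, so the product over $\mathfrak{p}\mid\mathfrak{b}$ of the local factors $\max\{1,|t|_\mathfrak{p}\}^{n_\mathfrak{p}}$ is precisely $\Nm\mathfrak{b}$.
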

\begin{proof}
These may be gleaned from the first few pages of Lang \cite[Chapter 3]{Lang}.
\end{proof}

\begin{lem} \label{lem:LangWeberGolomb}
Let $S$ be a finite set of prime ideals in a number field $K$.
Put
\[
    E^* = \{t \in K : \ord_{\mathfrak{p}}(t) \ne {-1} \text{ for all } \mathfrak{p} \not \in S\}.
\]
Then
\[
    \Num(E^*, X) \ll X^{3n/2} (\log X)^r
\]
where $n = [K : \Q]$, $r = \rk O_K^\times$, 
and the implied constant depends on $\#S$ and $K$.
\end{lem}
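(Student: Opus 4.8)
The plan is to reduce the estimate to a count of denominator ideals, each weighted by the size of the corresponding fibre of the map $t\mapsto tO_K$. Given $t\in E^*$, write $tO_K=\mathfrak a\mathfrak b^{-1}$ with $\mathfrak a,\mathfrak b$ coprime integral ideals and split off the part of $\mathfrak b$ supported on $S$: $\mathfrak b=\mathfrak b_S\mathfrak b'$, where $\mathfrak b_S$ is supported on $S$ and $\mathfrak b'$ is coprime to every prime of $S$. For $\mathfrak p\notin S$ the defining condition $\ord_{\mathfrak p}(t)\neq -1$ forces $\ord_{\mathfrak p}(t)\ge 0$ or $\ord_{\mathfrak p}(t)\le -2$, so every prime dividing $\mathfrak b'$ divides it to exponent at least $2$; that is, $\mathfrak b'$ is \emph{squarefull}. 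Also, $H(t)\le X$ together with Lemma \ref{lem:Heights}(c) gives $\Nm\mathfrak a\le X^n$ and $\Nm\mathfrak b=\Nm\mathfrak b_S\cdot\Nm\mathfrak b'\le X^n$.

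The second ingredient is a fibre bound uniform in the ideal: for every fractional ideal $\mathfrak I$,
\[
\#\{t\in K:\ tO_K=\mathfrak I,\ H(t)\le X\}\ll_K(\log X)^r .
\]
If no such $t$ exists this is trivial; otherwise the generators of $\mathfrak I$ form a single coset of $O_K^\times$, and for any two of them $t,t'$ Lemma \ref{lem:Heights}(a),(b) gives $H(t/t')\le H(t)H(t')\le X^2$, so $t/t'$ is a unit of height at most $X^2$. Fixing one admissible generator $t_*$, every admissible $t$ then lies in $t_*\cdot\{v\in O_K^\times:H(v)\le X^2\}$, so it suffices to bound $\#\{v\in O_K^\times:H(v)\le Y\}$. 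Since $\lvert v\rvert_w=1$ at every finite place, the product formula over the archimedean places yields $n\log H(v)=\tfrac12\sum_{w\mid\infty}n_w\lvert\log\lvert v\rvert_w\rvert$; hence in the logarithmic (Dirichlet) embedding the admissible units lie in a region of weighted $\ell^1$-radius $O(\log Y)$ of the rank-$r$ unit lattice, of which there are $O_K\bigl((\log Y)^r\bigr)$. Taking $Y=X^2$ proves the claim.

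Putting the pieces together, $\Num(E^*,X)$ is bounded by a constant times $(\log X)^r$ times the number of triples $(\mathfrak a,\mathfrak b_S,\mathfrak b')$ with $\mathfrak a$ integral of norm $\le X^n$, $\mathfrak b_S$ supported on $S$, $\mathfrak b'$ squarefull, and $\Nm\mathfrak b_S\cdot\Nm\mathfrak b'\le X^n$. By Weber's theorem \eqref{eq:weber} there are $\ll X^n$ choices of $\mathfrak a$. For the denominator I sum over $\mathfrak b_S$ first, using the classical bound (originally, over $\Q$, due to Golomb) that the number of squarefull ideals of norm $\le Y$ is $\ll Y^{1/2}$ — which follows by writing a squarefull ideal as $\mathfrak c^2\mathfrak d^3$ with $\mathfrak d$ squarefree and invoking $\sum_{\Nm\mathfrak c\le T}(\Nm\mathfrak c)^{-2/3}\ll T^{1/3}$ — to obtain
\[
\sum_{\Nm\mathfrak b_S\le X^n}\#\{\mathfrak b'\text{ squarefull}:\ \Nm\mathfrak b'\le X^n/\Nm\mathfrak b_S\}\ll X^{n/2}\sum_{\mathfrak b_S}(\Nm\mathfrak b_S)^{-1/2}=X^{n/2}\prod_{\mathfrak p\in S}\bigl(1-(\Nm\mathfrak p)^{-1/2}\bigr)^{-1},
\]
and the last product is at most $\bigl(1-2^{-1/2}\bigr)^{-\#S}$ because every prime ideal has norm $\ge 2$. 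Multiplying through gives $\Num(E^*,X)\ll X^n\cdot X^{n/2}\cdot(\log X)^r=X^{3n/2}(\log X)^r$ with implied constant depending only on $K$ and $\#S$.

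The main obstacle is the uniformity in $\mathfrak I$ of the fibre bound: estimating directly the units $u$ with $H(ut_*)\le X$ produces a box in the logarithmic embedding whose dimensions involve $\lvert\log\lvert t_*\rvert_w\rvert$ and hence depend on $\mathfrak I$, a bound that cannot be summed over the roughly $X^n$ relevant ideals; the detour through $H(t/t')\le X^2$ to reduce to units of bounded height is what removes this dependence. A smaller but essential point is the convergence of $\sum_{\mathfrak b_S}(\Nm\mathfrak b_S)^{-1/2}$, which keeps the $S$-part of the denominator from inflating the power of $\log X$ and confines the dependence on $S$ to the implied constant.
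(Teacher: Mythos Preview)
Your proof is correct and follows essentially the same approach as the paper: both use the trick $H(t/t')\le X^2$ to bound the unit fibre by $\ll(\log X)^r$, Weber's count $\ll X^n$ for the numerator ideal, and the Golomb-type squarefull bound $\ll X^{n/2}$ for the denominator. The only cosmetic differences are that the paper cites Lang's estimate for the unit count rather than deriving it from the Dirichlet embedding, and handles the $S$-part of the denominator via the decomposition $\mathfrak b=\mathfrak d\,\mathfrak b_0$ with $\mathfrak d$ a squarefree divisor of $\prod_{\mathfrak p\in S}\mathfrak p$ (yielding the constant $2^{\#S}$) instead of your Euler-product sum over $\mathfrak b_S$ (yielding $(1-2^{-1/2})^{-\#S}$).
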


\begin{proof}

\noindent \textit{Step 1: Units.}
Let $\mathcal{I}_K$ denote the set of integral ideals of $K$. 
Then the map $\psi : K^\times \to \mathcal{I}_K \times \mathcal{I}_K$
assigning to $t$ its numerator and denominator ideals 
is generally not finite-to-one; 
but if $tO_K = t'O_K$ with $H(t), H(t)' \le X$ and $tt' \ne 0$,
then $t' = tu$ for some $u \in O_K^\times$ 
with 
\[H(u) = H(t^{-1} t') \le H(t^{-1}) H(t') = H(t) H(t') \le X^2\]
by Lemma \ref{lem:Heights}(a) and (b).
Writing $E^*(X)$ for the set of elements of $E^*$ up to height $X$, it follows that
\[
\Num(E^*, X) \le 1 + \# \psi(E^*(X)) \cdot \Num(O_K^\times, X^2).
\]
(accounting for $t = 0$).
By Lang's estimate \cite[Chapter 3, Theorem 5.2(ii)]{Lang}, we have
\[ 
    \Num(O_K^\times, X^2) \ll (\log X)^r, 
\]
so it remains to show that the cardinality of $\psi(E^*(X))$ is $O(X^{3n/2})$.

\medskip 

\noindent \textit{Step 2: Ideals.}
Let $t \in E^*(X)$ and write $\psi(t) = (\mathfrak{a}, \mathfrak{b})$.
Then $\Nm \mathfrak{a}$ and $\Nm \mathfrak{b}$ do not exceed $X^n$ by Lemma \ref{lem:Heights}(c).
Moreover, 
\[
    \ord_{\mathfrak{p}}(\mathfrak{b}) = \max \{0, -{\ord_{\mathfrak{p}}(t)}\} \ne 1 
\]
for all $\mathfrak{p} \not \in S$.
With $\mathcal{B}_S \subset \mathcal{I}_K$ denoting the set of all such ideals, 
and counting by norm,
we have (by unique factorization)
\[
    \Num(\mathcal{B}_S, X) = \sum_{\mathfrak{d} \mid \mathfrak{c}} \Num\Big(\mathcal{B}, \frac{X}{\Nm \mathfrak{d}}\Big) \le 2^{\# S} \Num(\mathcal{B}, X)
\]
where $\mathfrak{c}$ is the product of the elements of $S$
and $\mathcal{B} := \mathcal{B}_\varnothing$.
Imitating Golomb \cite{Golomb}, 
one can show that every element $\mathfrak{b}$ of $\mathcal{B}$ factors uniquely as 
$\mathfrak{b} = \mathfrak{n}^2 \mathfrak{m}^3$ with $\mathfrak{m}$ squarefree.
This yields the formula
\begin{equation} \label{eq:BXformula}
    \Num(\mathcal{B}, X) = \sum_{\mathfrak{m}} \lvert \mu_K(\mathfrak{m}) \rvert \cdot \Num\Big(\mathcal{I}_K, \sqrt{\frac{X}{\Nm \mathfrak{m}^3} }\Big)
\end{equation}
where $\mu_K$ is the M\"obius function of $K$.
From the classical estimate \cite[Satz 202]{Landau} 
\[ 
    \Num(\mathcal{I}_K, X) \ll X
\]
we derive 
\[
    \Num(\mathcal{B}, X) \ll \sum_{\mathfrak{m}} \frac{1}{\Nm \mathfrak{m}^{3/2}} \cdot X^{1/2}
\]
which is $O(X^{1/2})$ as the series is convergent ($3/2 > 1$).
Putting it all together, 
we conclude 
\[
    \# \psi(E^*(X)) \le \Num(\mathcal{I}_K, X^n) \cdot \Num(\mathcal{B}, X^n) \ll X^{3n/2}. \qedhere
\]
\end{proof}

\begin{rmk}
Using the full strength of Weber's theorem \eqref{eq:weber}
\[
    \Num(\mathcal{I}_K, X) = \rho_K X + O(X^{1-1/n})
\]
it follows from \eqref{eq:BXformula} and Abel summation that
\[
    \Num(\mathcal{B}, X) = \rho_K \frac{\zeta_K(3/2)}{\zeta_K(3)} X^{1/2} + O\begin{cases}
    X^{1/3}         & n < 3 \\ 
    X^{1/3} \log X  & n = 3 \\
    X^{(1-1/n)/2}   & n > 3.
    \end{cases}
\]
This generalizes the asymptotics obtained by Golomb for $K = \Q$,
and can also be used to show that Lemma \ref{lem:LangWeberGolomb} is sharp when $r = 0$.
\end{rmk}

\begin{proof}[Proof of Theorem \ref{thm:den_gives_conj}]
Since the surjectivity and bijectivity loci differ by a finite set, 
\[
    \Num(E, X) = \Num(U(K) \setminus \Sigma, X) + O(1).
\]
By hypothesis and Lemma \ref{lem:LangWeberGolomb},
\[
    \Num(U(K) \setminus \Sigma, X) \le \Num(E^*, X) \ll (X^{2n})^{3/4} (\log X)^r
\]
where $r = \rk O_K^\times$.
Note that $r = 0$ iff $K$ is $\Q$ or an imaginary quadratic field.
In any case,
the claim follows from Schanuel's estimate $X^{2n} \ll \Num(K, X)$.
\end{proof}

\section{Tropical dynamics} \label{sec:examples}

In this section we exhibit several classes of one-parameter families admitting denominator lemmata (defined in the previous section), 
thereby giving evidence for the strong zero-density conjecture.
The method ultimately has its origins in \cite[Theorem 6]{WaldeRusso}. 
The general idea is to factor $f$ through the valuation to obtain a piecewise linear model with simpler dynamics.


\begin{eg} \label{eg:1}
Let $L$ be a field with discrete valuation $v$.
Let $d \ge 2$ be an integer and let 
\[
	f(z) = a_d z^d + \ldots + a_1 z + a_0 + c \in L[z]
\]
with $a_d \ne 0$.
Assume $v(a_i) \ge 0$ for all $i$ and $v(a_d) = 0$.
If $v(c)$ is negative and not divisible by $d$, 
then 
\[\PrePer(f, L) = \{\infty\}.\]
\end{eg}

\begin{proof}
It's clear that $f^{-1}(\infty) = \{\infty\}$, 
so let $z \in L$.
By the nonarchimedean property of the valuation, 
\[
	v(f(z)) \ge \min\{v(a_d z^d), \ldots, v(a_1 z), v(a_0 + c)\}
\]
with equality if the minimum is unique,
i.e., if there is a single term with least valuation.
Of course, $v(a_d z^d) = d v(z)$ because $v(a_d) = 0$, 
and $v(a_0 + c) = v(c)$ because $v(a_0) \ge 0 > v(c)$.

For $0 < i < d$, 
we have $v(a_i z^i) = v(a_i) + i v(z) \ge i v(z)$.
If $v(z) < 0$ then $i v(z) > d v(z) = v(a_d z^d)$
while if $v(z) \ge 0$ then $iv(z) \ge 0 > v(c) = v(a_0 + c)$.
Thus, the minimum of 
\[
	v(a_d z^d), \ldots, v(a_1 z), v(a_0 + c)
\]
is either the first term or the last, 
depending on whether $v(z) < 0$ or $v(z) \ge 0$. 
Since $v(c)$ is not divisible by $d$, 
and since the terms we eliminated were all strictly greater 
than the candidate minima, 
it follows that 
\[
	v(f(z)) = \min\{ dv(z), v(c) \} \tag{*}
\]
for all $z$ in $L$.

From (*), 
we get that $v(z) \ge 0$ implies $v(f(z)) = v(c) < 0$,
while $v(z) < 0$ implies $v(f(z)) = dv(z) < v(z)$, as $d > 1$.
Therefore, $v(f^n(z)) \to -\infty$ as $n \to \infty$ 
so $z$ is not preperiodic.
\end{proof}


\begin{eg} \label{eg:2}
Let $L$ be a field with discrete valuation $v$.
Let $d \ge 2$ and $1 \le e \le d$ be integers and let 
\[
	f(z) = \frac{c}{a_e z^e + \ldots + a_d z^d} \in L(z)
\]
with $a_d a_e \ne 0$.
Assume $v(a_i) \ge 0$ for all $i$ 
and $v(a_d) = v(a_e) = 0$.
If $v(c)$ is negative and not divisible by $d$ nor by $d+1$, 
then 
\[\PrePer(f, L) = \{0, \infty\} \cup f^{-1}(\infty).\]
\end{eg}

\begin{proof}
It's clear that $f$ interchanges $0$ and $\infty$, 
and that $f^{-1}(0) = \{\infty\}$,
so let $z \in L$ and suppose $f(z) \ne \infty$.
By the nonarchimedean property,
\[
	v(f(z)) = v(c) - v(a_e z^e + \ldots + a_d z^d)
	        \le v(c) - \min\{v(a_e z^e), \ldots, v(a_d z^d)\}
\]
with equality if the minimum is unique.
As in Example \ref{eg:1}, 
for $e < i < d$ we have $v(a_i z^i) = v(a_i) + i v(z) \ge i v(z)$;
if $v(z) < 0$ then $i v(z) > d v(z) = v(a_d z^d)$
and 
if $v(z) > 0$ then $i v(z) > e v(z) = v(a_e z^e)$.
Thus, if $v(z) \ne 0$ 
then 
\[
	v(f(z)) = v(c) - \min\{d v(z), e v(z)\}
\]
while if $v(z) = 0$ 
then 
\[
	v(f(z)) \le v(c).
\]

Now, suppose $v(z) > 0$.
Then $v(f(z)) = v(c) - e v(z) < v(c) < 0$, 
so \[v(f^2(z)) = (1 - d) v(c) + de v(z)\]
and this exceeds $v(z)$ because $(1 - d) v(c) > 0$ and $de > 1$. 
Thus, $v(f^n(z)) \to +\infty$ as $n \to \infty$, 
so $z$ is not preperiodic.
Similarly, if $v(z) = 0$, 
then $v(f^2(z)) \ge (1 - d) v(c) > 0$,
so $z$ is not preperiodic.

Finally, suppose $v(z) < 0$.
Then $v(f(z)) = v(c) - d v(z)$, which is nonzero because $d \nmid v(c)$. 
It's positive iff $v(z) < v(c)/d$, 
in which case $z$ is not preperiodic by the previous paragraph,
so suppose $v(z) > v(c)/d$.
Then
\[
	v(f^2(z)) - v(z) = (d^2 - 1) \big[ v(z) - v(c)/(d+1) \big]
\]
Since $d \ge 2$ and $d + 1 \nmid v(c)$, the r.h.s.~is never zero.
Thus, 
\begin{align*}
v(z) < v(c)/(d+1) &\implies v(f^2(z)) < v(z), \text{ and} \\
v(z) > v(c)/(d+1) &\implies v(f^2(z)) > v(z).
\end{align*}
Either way, 
$v(f^{2n}(z)) \not \in (v(c)/d, 0)$ for some $n \ge 1$,
so $z$ is not preperiodic.
\end{proof}

\begin{rmk}
The same argument gives the same conclusion for 
\[
    f(z) = \frac{c z^d}{a_0 + \ldots + a_e z^e}
\]
assuming 
$v(a_i) \ge 0$, 
$v(a_e) = v(a_0) = 0$, 
$d > e$, 
and $v(c)$ is negative and not divisible by $d$ nor $d-1$.
\end{rmk}

Our final Example features a 3-cycle.
The novel idea is to keep track of multiple $v$-adic distances simultaneously.

\begin{eg}\label{eg:3}
Let $L$ be a field with discrete valuation $v$, and let $d \ge e+2 \ge 5$ be integers with $v(d) = 0$.
Consider the rational map
\[
	f(z) = \frac{cz^e + 1}{(1 - z)^d} \in L(z).
\]
If $v(c) = -1$ then 
\[\PrePer(f, L) = \{0, 1, \infty\}.\]
\end{eg}

\begin{proof}
It's clear that $0 \mapsto 1 \to \infty \to 0$ under $f$, 
with $f^{-1}(\infty) = \{1\}$, 
so suppose $z \in L \setminus \{0, 1, \infty\}$ 
and let $v(z) = a$ and $v(z - 1) = b$.
Note that $a$ and $b$ are not independent;
if $a > 0$ then $b = 0$, 
and if $b > 0$ then $a = 0$, 
while if $a < 0$ or $b < 0$ then $a = b$. 

Let $a' = v(f(z))$ and $b' = v(f(z) - 1)$.
Since $ea \ne 1$, 
we have 
\[
a' = \min\{ea - 1, 0\} - db
 = 
	\begin{cases}
	0              &  a > b = 0 \\  
	-1 - db        &  a = 0 < b \\  
	-1 - (d - e)a  &  a = b < 0 \\  
	-1             &  a = b = 0.    
	\end{cases}
\]
We claim that 
\[
b' = 
	\begin{cases}
	a              &  a > b = 0 \\  
	-1 - db        &  a = 0 < b \\  
	0              &  a = b < 0 \\  
	-1             &  a = b = 0.    
	\end{cases}
\]
Indeed, the hypothesis that $d - e \ge 2$ implies $a' \ne 0$, 
which completely determines $b'$ in every case but the first.
Hence, suppose $a > b = 0$.
By the binomial theorem, 
\begin{equation} \label{eq:b_prime}
	b' 
	= v(cz^e + 1 - (1 - z)^d) 
	= v\big(cz^e + dz - \sum_{i > 1} \binom{d}{i}(-z)^i \big).
\end{equation}
But 
\[
	v\big(\binom{d}{i}(-z)^i \big) \ge i v(z) > v(dz)
\]
for all $i > 1$; 
likewise, 
\[
    v(cz^e) = ea - 1 > a
\] 
since $e \ge 3$ and $a \ge 1$.
Therefore, $dz$ is the unique term in \eqref{eq:b_prime} with minimal valuation.
Thus $b' = a$, as desired.

Considering the induced action of $f$ on pairs $(a, b) \to (a', b')$ 
we see that 
\begin{itemize}
    \item the origin $a = b = 0$ is mapped to the point $a = b = -1$
    \item the diagonal ray $a = b < 0$ is mapped to the horizontal ray $a' > 0 = b'$
    \item the horizontal ray $a > 0 = b$ is mapped to the vertical ray $a' = 0 < b'$.
\end{itemize}
On the vertical ray $a = 0 < b$
we have
\begin{itemize}
    \item $a' = b' = -1 - db < 0$
    \item $a'' = -1 - (d - e)(-1 - db) > 0$ and $b'' = 0$
    \item $a''' = 0$ and $b''' = -1 - (d - e)(-1 - db)$
\end{itemize}
and $b''' > b$ because \textit{a fortiori}
\[
	b > -\frac{d-e-1}{d(d-e) - 1}.
\]
Thus, if $v(z - 1) > 0$ then $v(f^{3n}(z) - 1) \to +\infty$ as $n \to \infty$.
It follows that no $z$ in $L$ is preperiodic.
\end{proof}

\begin{rmk}
Example \ref{eg:3} extends to the case $e = 2$ and $d \ge 5$, 
with the modified conclusion that
\[
    \PrePer(f, L) = f^{-1}(\{0, 1, \infty\}).
\] 
Indeed, if $v(z) = 1$ then $v(dz) = v(cz^2)$ so the expansion in \eqref{eq:b_prime} does not have a unique minimum.
Thus $v(z) = 1$ merely implies $v(f(z) - 1) \ge 1$.
Either $v(f(z) - 1) = +\infty$, in which case $z \in f^{-1}(1)$,
or $v(f(z) - 1) < +\infty$, 
in which case the rest of the argument goes through; 
indeed, no pair $(a, b)$ maps to $(1, 0)$ because $d - e \ge 3$.
Note that the Newton polygon of
\[cz^2 + 1 - (1 - z)^d = 0\]
has an edge from $(1, 0)$ to $(2, -1)$
(of slope $-1$ and length 1)
so that 1 has at most one nonzero preimage in $L$.
\end{rmk}

\begin{rmk}
Example \ref{eg:3} similarly extends to the case $e = 2$ and $d = 4$, with the further modified conclusion that 
\[
    \PrePer(f, L)
    = f^{-3}(\{0, 1, \infty\}).
\]
This follows from the fact that now, in the $ab$-plane, 
\[
    (0, 0) \to (-1, -1) \to (1, 0).
\]
\end{rmk}

We conclude this section with a summary result explaining how each of these families has a denominator lemma.

\begin{propn} \label{propn:summary}
Let $K$ be a number field.
Each of the following families $f_t$ 
has the property that if $\PrePer(f_t, K) \ne \PrePer(f, K(T))_t$
then $v(t) \ne -1$ 
for all finite places $v$ outside of the corresponding finite set $S \subseteq M_K$, 
defined below.
\begin{enumerate}[1.]
    \item Let $d \ge 2$ 
    and let \[f(z) = a_d z^d + \ldots + a_0 + T\]
    where $a_i \in K$ and $a_d \ne 0$; 
    set 
    \[S^c = \{v \in M_K : v(a_i) \ge 0 \text{ for all $i$ and } v(a_d) = 0\}.\]
    \item Let $d \ge 2$ and $1 \le e \le d$ 
    and let \[f(z) = \frac{T}{a_e z^e + \ldots + a_d z^d}\]
    where $a_i \in K$ and $a_d, a_e \ne 0$;
    set 
    \[S^c = \{v \in M_K : v(a_i) \ge 0 \text{ for all $i$ and } v(a_d) = v(a_e) = 0\}.\]
    \item Let $d \ge e+2 \ge 5$
    and let \[f(z) = \frac{Tz^e + 1}{(1 - z)^d};\]
    set 
    \[S^c = \{v \in M_K : v(d) = 0\}.\]
\end{enumerate}
\end{propn}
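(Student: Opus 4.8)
The plan is to reduce each of the three statements to the corresponding local computation already carried out in Examples \ref{eg:1}, \ref{eg:2}, and \ref{eg:3}. The key observation is the contrapositive: if $v \notin S$ and $v(t) = -1$ (in particular $v(t)$ is negative and divisible by neither $d$, nor $d+1$, nor $d-1$ as required), then the hypotheses of the relevant Example are satisfied with the base field $L = K(T)$ replaced by $K$, the valuation $v$ of $K(T)$ replaced by $v$ of $K$, and $c$ replaced by $t$ — so that Example tells us $\PrePer(f_t, K)$ is exactly the ``trivial'' set appearing in its conclusion ($\{\infty\}$ in case 1, $\{0, \infty\} \cup f_t^{-1}(\infty)$ in case 2, $\{0, 1, \infty\}$ in case 3).

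\medskip

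The remaining point is then to identify this trivial set with $\PrePer(f, K(T))_t$, the specialization of the generic preperiodic locus. First I would compute $\PrePer(f, K(T))$ itself. For cases 1 and 3 one can apply the very same Example verbatim over $L = K(T)$ with the valuation $v = \ord_\infty$ (the degree valuation): since $T$ has $\ord_\infty$-value $-1$, which is negative and (for $d \ge 2$) satisfies the divisibility constraints, the Example yields $\PrePer(f, K(T)) = \{\infty\}$ in case 1 and $\{0, 1, \infty\}$ in case 3. For case 2 one argues similarly that $\PrePer(f, K(T)) = \{0, \infty\} \cup f^{-1}(\infty)$, where $f^{-1}(\infty)$ consists of the roots of $a_e z^e + \ldots + a_d z^d$ in $K(T)$; but these roots are algebraic over $K$ (the coefficients don't involve $T$), so each such root is a constant and specializes to itself. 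Hence $\PrePer(f, K(T))_t$ equals the trivial set in every case, and the displayed equality $\PrePer(f_t, K) = \PrePer(f, K(T))_t$ holds whenever $v(t) = -1$ for some $v \notin S$ — which is exactly the contrapositive of the assertion.

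\medskip

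Two bookkeeping matters require care. One must check that for $t \in U(K)$ the specialization $f_t$ genuinely has the shape assumed in the Example (same degree $d$, same coefficient pattern) — this is automatic since the coefficients $a_i$ are constants and the leading coefficient condition $v(a_d) = 0$ is built into $S^c$, so $\deg f_t = d$ at every $v \notin S$ forces $t \in U$; conversely one notes that $U$ differs from the prescribed parameter set only inside the finite bad set. One also has to handle $f_t^{-1}(\infty)$ in case 2 (and the backward orbits in the remarks following Examples \ref{eg:2} and \ref{eg:3}): these preimages may genuinely move with $t$, but they are accounted for on both sides — the Example's conclusion already includes $f_t^{-1}(\infty)$, and since $f(z) - \infty$ (i.e.\ the denominator $a_e z^e + \ldots + a_d z^d$) has its roots independent of $T$, the generic preimages specialize bijectively onto $f_t^{-1}(\infty)$.

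\medskip

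The main obstacle I anticipate is not any single hard estimate but the verification that \emph{nothing new appears} under specialization beyond what the Example permits — i.e.\ confirming that $\PrePer(f, K(T))_t$ is not strictly smaller than the trivial set (which would make the denominator-lemma conclusion vacuously true but the stated equivalence false). This is really the content of Lemma \ref{lem:inj_locus}: away from the finitely many roots of the Vandermonde determinant $D_\Gamma$ the specialization map on $\Gamma$ is injective, and the trivial sets here ($\{\infty\}$, $\{0, \infty\}$, $\{0, 1, \infty\}$) are rigid enough — being permuted among the fixed $K$-points $0, 1, \infty$ — that injectivity on them is immediate and no cancellation occurs. Once that is in hand, the proof is just the concatenation ``$v(t) = -1 \Rightarrow$ (Example) $\Rightarrow \PrePer(f_t, K) = $ trivial set $= \PrePer(f, K(T))_t$,'' applied three times.
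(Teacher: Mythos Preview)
Your proposal is correct and follows essentially the same route as the paper: argue by contrapositive, applying the relevant Example twice---once over $L = K(T)$ with $v = \ord_\infty$ (so that $v(T) = -1$) to pin down $\PrePer(f, K(T))$, and once over $L = K$ with $v \notin S$ and $v(t) = -1$ to pin down $\PrePer(f_t, K)$. The paper's proof is terser (it does case 1 and declares the others analogous), whereas you spell out the bookkeeping for case 2 (that the roots of $a_e z^e + \ldots + a_d z^d$ are constants in $T$ and hence specialize to themselves) and the injectivity of specialization on the small sets $\{\infty\}$, $\{0,\infty\}\cup f^{-1}(\infty)$, $\{0,1,\infty\}$; these are genuine details the paper elides, and your treatment of them is fine. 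Your parenthetical mention of a ``$d-1$'' divisibility condition is extraneous (it comes from the remark after Example~\ref{eg:2}, not the Example itself), but this does no harm since $-1$ satisfies all the relevant constraints anyway.
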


\begin{proof}
Each instance is proved by a double application of the corresponding Example. 
We show this for the first family; the others are analogous.
First, take $L = K(T)$ with $v = \ord_\infty$ and $c = T$.
Since $v(c) = -1$ and $v(a_i) = 0$ for all $i$, 
Example \ref{eg:1} yields
\[\PrePer(f, K(T)) = \{\infty\}.\]
Second, take $L = K$ with $v \not \in S$ and $c = t$.
If $v(t) = -1$ then by Example \ref{eg:1} again
\[\PrePer(f_t, K) = \{\infty\}. \qedhere\]
\end{proof}

\begin{rmk} \label{rmk:berkovich}
All of the Examples in this section may be recast in the language of Berkovich space, as follows. 
Any rational map $f$ may be ``tropicalized'' relative to a finite subtree $Y$ of $\P^1_\an$ by post-composing it with the canonical retraction $r : \P^1_\an \to Y$.
This yields a piecewise linear map $\tilde{f} : Y \to Y$
with the property that $r(f(\zeta)) = \tilde{f}(r(\zeta))$
for all $\zeta$ in some closed connected Berkovich affinoid depending on $f$ and $Y$.
The process of extracting information about the dynamics of $f$ from the dynamics of $\tilde{f}$ may also be completely formalized, 
leading to a formula for the set of $K$-rational preperiodic points of $f$ in terms of various data associated to $\tilde{f}$.
This approach is elaborated in the author's PhD thesis \cite[\S\S 5.3--5.6]{thesis}.
\end{rmk}

\section{Postlude: The quadratic family} \label{sec:quadratic} 

This section is devoted to the most intensely studied one-parameter family:
\[f_t(z) = z^2 + t.\]
We illustrate how our definitions and results apply to this family, and then we give some refined estimates on the total number of its rational preperiodic points.

Here, $d = 2$ and $d' = 1$.
An integral lift is given by $f = (X^2 + TY^2 : Y^2)$.
Hence, the resultant polynomial is 
\[
	R_f(T) = \Res(X^2 + TY^2, Y^2) = 
\begin{pmatrix}
1 &   & 0 &   \\
0 & 1 & 0 & 0 \\
T & 0 & 1 & 0 \\
  & T &   & 1 \\
\end{pmatrix}
= 1.\]
Thus, the domain of definition is $U = \A^1$.
Note that since $\deg R_f = 0 < 4 = 2dd'$, 
we must have $\deg f_\infty < 2$; 
indeed, $f_\infty(z) = \infty$ is constant.

Since $\ord_\infty(T) = -1$, 
Example \ref{eg:1} 
implies $\Gamma = \PrePer(f_t, \Q(T)) = \{\infty\}$.
Thus the injectivity locus is automatically maximal: 
$U^\circ = U$.
As for the surjectivity locus, 
since $f_t^{-1}(\infty) = \{\infty\}$ for all $t$, 
there are never any new branches, only new cycles. 

Now write $t = a/b$ in lowest terms.
Then 
\[H(f_t) 
= \max\{|a|, |b|\}
\quad\text{and}\quad 
v(\Res f_t) = 4 \max\{0, v(b)\}\]
so that the resultant ideal of $f_t$ is $\Res f_t = b^4 \Z$.
In particular, $\Nm \Res f_t = b^4$ and $\omega(\Nm \Res f_t) = \omega(b)$.

By Proposition \ref{propn:summary}(a),
this family has a denominator lemma.
Therefore, 
the total number of its rational preperiodic points is 
\[
    \mathcal{A}(X) = \! \sum_{H(t) \le X} \#{\PrePer}(z^2 + t, \Q) = 1 \cdot \Num(\Q, X) + \mathcal{R}(X)
\]
where, by Theorems \ref{thm:avg} and \ref{thm:den_gives_conj}, 
\begin{equation} \label{eq:estimate}
    \mathcal{R}(X) = \! \sum_{H(t) \le X} \big({\#}{\PrePer}(z^2 + t, \Q) - 1\big) \ll X^{3/2} \log X.
\end{equation}
The estimate \eqref{eq:estimate} is precisely the one obtained by Le Boudec and Mavraki \cite[proof of Theorem 1.1]{LeBoudecMavraki}.
We record the following marginal improvement,
and also show that $\mathcal{R}(X)$ grows at least linearly with $X$.

\begin{thm} \label{thm:HR} \label{thm:gold}
For the quadratic family $f_t(z) = z^2 + t$ over $\Q$, 
\[
C X + O(\sqrt{X} \log X) \le \mathcal{R}(X) \ll X^{3/2} \log \log X \log \log \log X
\]
where
\[
	C = \frac{12}{\pi^2} \left(\frac{2 \pi}{3 \sqrt{3}} + \frac{1 + \sqrt{5}}{2} + 2 \log \frac{1 + \sqrt{5}}{2} \right).
\]
Moreover, if no $f_t$ has a $\Q$-rational cycle of length exceeding 3 (``Poonen's conjecture''), then the lower bound is an equality.
\end{thm}

\begin{rmk} \label{rmk:LBM}
Since every element $a_2 z^2 + 1$ of $\Poly_2^*$ 
is conjugate to $z^2 + a_2$ via $z \mapsto z/a_2$,
Theorem \ref{thm:gold} entails that Poonen's conjecture implies the $d = 2$ case of Le Boudec and Mavraki's conjecture \cite[(1.3)]{LeBoudecMavraki} 
plus an error term:
\[
    \frac{\mathcal{R}(X)}{\Num(\Poly_2^*(\Q), X)}
    \sim \frac{\pi^2}{12} \cdot \frac{C X + O(\sqrt{X}\log X)}{X^2}
    = \frac{C \pi^2}{12 X} + O\left(\frac{\log X}{X^{3/2}}\right).
\]
\end{rmk}

\begin{proof}

\noindent\textit{Upper bound.}
Let $t = a/b$ in lowest terms with $b > 0$.
By Walde \& Russo \cite[Corollary 4]{WaldeRusso},
if $\Gamma_t \ne \{\infty\}$ then $b = c^2$ is a perfect square.
Since $\Gamma_0 = \{0, 1, -1, \infty\}$, 
we have 
\[
    \mathcal{R}(X) = 3 + 2 \sum_{c \le \sqrt{X}} \sum_{\substack{a \le X \\ \gcd(a, c) = 1}} ({\#}\Gamma_{a/c^2} - 1)
\]
for $X \ge 1$.
Invoking Benedetto's bound (cf.~Proposition \ref{propn:benetronc})
\[
    \#\Gamma_{a/c^2} - 1 \ll \omega(c) \log \omega(c)
\]
yields 
\[
    \mathcal{R}(X) \ll \sum_{c \le \sqrt{X}} \omega(c) \log \omega(c) \!\!\! \sum_{\substack{a \le X \\ \gcd(a, c) = 1}} 1.
\]
A simple exercise shows that the inner sum is $\ll \displaystyle \frac{\varphi(c)}{c} \cdot X$
where $\varphi$ is Euler's totient function.
Therefore, 
\begin{equation} \label{eq:bestRX}
\mathcal{R}(X) \ll X \cdot \!\! \sum_{c \le \sqrt{X}} \! \omega(c) \log {\omega(c)} \frac{\varphi(c)}{c}.    
\end{equation}
We claim that the sum on the r.h.s.~of \eqref{eq:bestRX} 
is $O(\sqrt{X} \log \log X \log \log \log X)$.
The key input is the Hardy--Ramanujan inequality 
\cite[Lemma B]{HardyRamanujan}, 
which furnishes absolute constants $L$ and $D$ such that 
\[
\#\{c \le x : \omega(c) = k\} < \frac{Lx}{\log x} \cdot \frac{(\log \log x + D)^{k-1}}{(k-1)!}.
\]
for all $k \ge 1$ and $x \ge 2$.
This and the trivial estimate $\varphi(c) \le c$ imply 
\begin{align*}
\sum_{c \le \sqrt{X}} \omega(c) \log \omega(c) \frac{\varphi(c)}{c} 
& < \frac{L' \sqrt{X}}{\log X} \sum_{k=1}^\infty k \log k \cdot \frac{(\log \log X + D')^{k-1}}{(k-1)!}
\end{align*}
where $L' = 2L$ and $D' = D - \log 2$. 
The desired upper bound follows on noting that for any $\alpha > 0$, 
\[
    \sum_{n=0}^\infty (n + \alpha) \log (n + \alpha) \frac{x^n}{n!} \sim e^x x \log x \qquad (x \to \infty).
\]

\smallskip 

\noindent\textit{Lower bound.}
We establish the lower bound in several steps.

\medskip
\noindent \textit{Step 1: Poonen's theorem.}
Define the following seven subsets of $\Q$:
\begin{align*}
E_0 &= \{0, -1, -2, 1/4, -29/16\}				\\
E_1 &= \{1/4 - \rho^2 : \rho \ne 0, 1/2, -1/2, 3/2, -3/2\}		\\
E_2 &= \{-3/4 - \sigma^2 : \sigma \ne 0, 1/2, -1/2\}				\\
E_3 &= \big\{{-}\frac{\tau^6 + 2\tau^5 + 4\tau^4 + 8\tau^3 + 9\tau^2 + 4\tau + 1}{4\tau^2(\tau + 1)^2} : \tau \ne 0, -1, 1, -2, -1/2\big\} \\
E_4 &= \big\{{-}\frac{2(\eta^2 + 1)}{(\eta^2 - 1)^2} : \eta \ne -1, 0, 1 \big\} 			\\
E_5 &= \big\{{-}\frac{\nu^4 + 2\nu^3 + 2\nu^2 - 2\nu + 1}{(\nu^2 - 1)^2} : \nu \ne -1, 0, 1\big\} 	\\
E_6 &= \big\{{-}\frac{3\mu^4 + 10\mu^2 + 3}{4(\mu^2 - 1)^2} : \mu \ne -1, 0, 1\big\}.
\end{align*}

By Poonen's classification theorem \cite{Poonen}, 
$t \in E_0$ iff $\Gamma_t$ admits (and is conjecturally isomorphic to) one of five exceptional graphs,
while $E_1$--$E_6$ parametrize the six infinite families.
These are depicted in Figures \ref{fig:E0} and \ref{fig:E}.

\begin{figure}
\centering
\begin{tikzpicture}[scale=0.9]\footnotesize
\tikzset{vertex/.style = {shape=circle,fill,minimum size=5,inner sep=0, outer sep=1}}
\tikzset{edge/.style = {->,> = stealth'}}

\draw[black] (-2,1) rectangle (4,-1);
\draw[black] (-2,-1) rectangle (4,-3);
\draw[black] (-2,-3) rectangle (4,-5);
\draw[black] (-2,-5) rectangle (4,-7);
\draw[black] (-2,-7) rectangle (4,-11.25);
\node[] () at (4,-11.25) {};

\node[label=below right:{\normalsize $t=0$}] at (-2,1) {};

\node[vertex,label=below:{$0$}] (a0) at (0,0) {};
\node[vertex,label=below:{$-1$}] (b0) at (1,0) {};
\node[vertex,label=below:{$1$}] (c0) at (2,0) {};
\draw[edge] (a0) to[distance=13,out=120,in=60] (a0);
\draw[edge] (b0) to (c0);
\draw[edge] (c0) to[distance=13,out=120,in=60] (c0);

\node[label=below right:{\normalsize $t=-1$}] at (-2,-1) {};

\node[vertex,label=below:{$-1$}] (a1) at (0,-2) {};
\node[vertex,label=below:{$0$}] (b1) at (1,-2) {};
\node[vertex,label=below:{$1$}] (c1) at (2,-2) {};
\draw[edge] (a1) to[bend left] (b1);
\draw[edge] (b1) to[bend left] (a1);
\draw[edge] (c1) to (b1);

\node[label=below right:{\normalsize $t=-2$}] at (-2,-3) {};
\node[vertex,label=below:{$0$}]  (a2) at (-1,-4) {};
\node[vertex,label=below:{$-2$}] (b2) at (0,-4) {};
\node[vertex,label=below:{$2$}]  (c2) at (1,-4) {};
\node[vertex,label=below:{$1$}]  (d2) at (2,-4) {};
\node[vertex,label=below:{$-1$}] (e2) at (3,-4) {};
\draw[edge] (a2) to (b2);
\draw[edge] (b2) to (c2);
\draw[edge] (c2) to[distance=13,out=120,in=60] (c2);
\draw[edge] (d2) to (e2);
\draw[edge] (e2) to[distance=13,out=120,in=60] (e2);

\node[label=below right:{\normalsize $t=1/4$}] at (-2,-5) {};

\node[vertex,label=below:{$-\frac12$}] (a3) at (0.5,-6) {};
\node[vertex,label=below:{$\frac12$}] (b3) at (1.5,-6) {};
\draw[edge] (a3) to (b3);
\draw[edge] (b3) to[distance=13,out=120,in=60] (b3);

\node[label=below right:{\normalsize $t=-29/16$}] at (-2,-7) {};

\node[vertex,label=left:{$\frac34$}] (a) at (-0.5,-8.5) {};
\node[vertex,label=above:{$-\frac54$}] (b) at (0.5,-8.5) {};
\node[vertex,label=above:{$-\frac14$}] (c) at (1.5,-8.5) {};
\node[vertex,label=right:{$\frac14$}] (d) at (2.5,-8.5) {};
\node[vertex,label=left:{$-\frac34$}] (e) at (0,{-8.5-sqrt(3)/2}) {};
\node[vertex,label=left:{$\frac54$}] (f) at (1,{-8.5-sqrt(3)/2}) {};
\node[vertex,label=below:{$-\frac74$}] (g) at (2,{-8.5-sqrt(3)/2}) {};
\node[vertex,label=below:{$\frac74$}] (h) at (1.5,{-8.5-sqrt(3)}) {};
\draw[edge] (a) to (b);
\draw[edge] (b) to[bend left] (c);
\draw[edge] (c) to[bend left] (g);
\draw[edge] (g) to[bend left] (f);
\draw[edge] (f) to[bend left] (c);
\draw[edge] (e) to (b);
\draw[edge] (d) to[bend left] (g);
\draw[edge] (h) to[bend left] (f);
\end{tikzpicture}
\caption{The five exceptional portraits.} \label{fig:E0}
\end{figure}

\begin{figure}
\centering 
\makebox[\textwidth][c]{
\begin{tikzpicture}[scale=0.9]
\tikzset{vertex/.style = {shape=circle,fill,minimum size=5,inner sep=0, outer sep=1}}
\tikzset{edge/.style = {->,> = stealth'}}

\node[vertex] (a) at (0,0) {};
\node[vertex] (b) at (1,0) {};
\node[vertex] (c) at (2,0) {};
\node[vertex] (d) at (3,0) {};
\draw[edge] (b) to (a);
\draw[edge] (a) to[distance=13,out=150,in=210] (a);
\draw[edge] (c) to (d);
\draw[edge] (d) to[distance=13,out=30,in=-30] (d);

\node[vertex] (a2) at (6,0) {};
\node[vertex] (b2) at (7,0) {};
\node[vertex] (c2) at (8,0) {};
\node[vertex] (d2) at (9,0) {};
\draw[edge] (a2) to (b2);
\draw[edge] (b2) to[bend left] (c2);
\draw[edge] (c2) to[bend left] (b2);
\draw[edge] (d2) to (c2);

\node[vertex] (a3) at (3.5,{3.5-sqrt(3)/2}) {};
\node[vertex] (b3) at (4.5,{3.5-sqrt(3)/2}) {};
\node[vertex] (c3) at (5.5,{3.5-sqrt(3)/2}) {};
\node[vertex] (d3) at (4,   3.5) {};
\node[vertex] (e3) at (5,   3.5) {};
\node[vertex] (f3) at (4.5,{3.5+sqrt(3)/2}) {};
\draw[edge] (a3) to[bend left] (d3);
\draw[edge] (d3) to[bend left] (e3);
\draw[edge] (e3) to[bend left] (b3);
\draw[edge] (b3) to[bend left] (d3);
\draw[edge] (f3) to[bend left] (e3);
\draw[edge] (c3) to[bend left] (b3);

\node[vertex] (a4) at (-3,-3.5) {};
\node[vertex] (b4) at (-2,-3.5) {};
\node[vertex] (c4) at (-1,-3.5) {};
\node[vertex] (d4) at ( 0,-3.5) {};
\node[vertex] (e4) at (-1.5,{-3.5+sqrt(3)/2}) {};
\node[vertex] (f4) at (-1.5,{-3.5-sqrt(3)/2}) {};
\draw[edge] (b4) to (a4);
\draw[edge] (a4) to[distance=13,out=150,in=210] (a4);
\draw[edge] (c4) to (d4);
\draw[edge] (d4) to[distance=13,out=30,in=-30] (d4);
\draw[edge] (e4) to (c4);
\draw[edge] (f4) to (c4);

\node[vertex] (a5) at (8.5,-3.5) {};
\node[vertex] (b5) at (9.5,-3.5) {};
\node[vertex] (c5) at (10.5,-3.5) {};
\node[vertex] (d5) at (11.5,-3.5) {};
\node[vertex] (e5) at (12.5,{-3.5+sqrt(3)/2}) {};
\node[vertex] (f5) at (12.5,{-3.5-sqrt(3)/2}) {};
\draw[edge] (a5) to (b5);
\draw[edge] (b5) to[bend left] (c5);
\draw[edge] (c5) to[bend left] (b5);
\draw[edge] (d5) to (c5);
\draw[edge] (e5) to (d5);
\draw[edge] (f5) to (d5);

\node[vertex] (a6) at (3,-4) {};
\node[vertex] (b6) at (4,-4) {};
\node[vertex] (c6) at (5,-4) {};
\node[vertex] (d6) at (6,-4) {};
\node[vertex] (a7) at (3,-3) {};
\node[vertex] (b7) at (4,-3) {};
\node[vertex] (c7) at (5,-3) {};
\node[vertex] (d7) at (6,-3) {};
\draw[edge] (b6) to (a6);
\draw[edge] (a6) to[distance=13,out=150,in=210] (a6);
\draw[edge] (c6) to (d6);
\draw[edge] (d6) to[distance=13,out=30,in=-30] (d6);
\draw[edge] (a7) to (b7);
\draw[edge] (b7) to[bend left] (c7);
\draw[edge] (c7) to[bend left] (b7);
\draw[edge] (d7) to (c7);

\draw[black] (-1, 1) rectangle (4,-1);
\draw[black] (5, 1) rectangle (10,-1);
\draw[black] (3, 5) rectangle (6,2);

\draw[black] (-4, -2) rectangle (1, -5);
\draw[black] (2, -2) rectangle (7, -5);
\draw[black] (8, -2) rectangle (13, -5);

\draw[gray] (-1, -2) -- (0, -1);
\draw[gray] (4,  -2) -- (3, -1);
\draw[gray] (5,  -2) -- (6, -1);
\draw[gray] (10, -2) -- (9, -1);

\node[label=below right:{\normalsize $E_1$}] at (-1,1) {};
\node[label=below right:{\normalsize $E_2$}] at (5,1) {};
\node[label=below right:{\normalsize $E_3$}] at (3,5) {};
\node[label=below right:{\normalsize $E_4$}] at (-4,-2) {};
\node[label=below right:{\normalsize $E_6$}] at (2,-2) {};
\node[label=below right:{\normalsize $E_5$}] at (8,-2) {};
\end{tikzpicture}
}
\caption{The six infinite families. The gray lines indicate inclusions.} \label{fig:E}
\end{figure}

Note that there are more restrictions on the parameters $\rho, \sigma, \tau, \eta, \nu, \mu \in \Q$ than those explicitly appearing in Poonen's paper; 
we have filled these in to ensure that $E_0 \cap (E_1 \cup \ldots \cup E_6) = \varnothing$.
The situation is summarized in Table \ref{tab:poonen}.

\begin{table}
\vspace{1em}
\renewcommand{\arraystretch}{1.2}
\begin{tabular}{|c|l|l|} \hline 
Theorem & excludes & corresponding $t$ \\ \hline \hline 
1.1 & \phantom{[}$\rho = 0$                      & [$1/4$] \\ \hline 
1.2 & \phantom{[}$\sigma = 0$                    & [$-3/4$] \\ \hline 
1.3 & \phantom{[}$\tau = 0, -1$                  & [$\infty$] \\ \hline 
2.1 & \phantom{[}$\eta = 0, \pm 1$               & [$-3/4, \infty$] \\ \hline 
\multirow{3}{*}{3.1} 
 & [$\sigma = \pm 1/2$]    & \multirow{2}{*}{$-1$} \\  
 & [$\nu = 0$] & \\ \cline{2-3}
 & [$\rho = \pm 1/2$]                 & $0$ \\  \hline 
\multirow{2}{*}{3.2} 
 & \phantom{[}$\eta = 0$   & \multirow{2}{*}{[$-2$]} \\ 
 & [$\rho = \pm 3/2$]      & \\  \hline 
3.3 & \phantom{[}$\nu = \pm 1, 0$                & [$\infty, -1$] \\ \hline 
3.4 & [$\tau = -2, -1/2, 1$]          & $-29/16$ \\ \hline 
\end{tabular}
\vspace{1em}
\caption{Restrictions on Poonen's parameters.} \label{tab:poonen}
\end{table}

It follows from Poonen's Theorems 2.1, 2.2, 3.2, 3.3 that
\begin{enumerate}[i.]
	\item $E_1 \cap E_2 = E_6$ 
	\item $(E_1 \cup E_2) \cap E_3 = \varnothing$ 
	\item $E_4 \subseteq E_1$ 
	\item $E_5 \subseteq E_2$ 
\end{enumerate}
respectively.
For the nonce, let $n = n_X$ and $m = m_X$ be the discrete measures on $\Q$ defined by 
\[
    n(A) = \Num(A, X)
\]
and 
\[
	m(A) = \!\! \sum_{\substack{t \in A \\ H(t) \le X}} \!\! \big({\#}{\PrePer}(z^2 + t, \Q) - 1\big).
\]
By definition, the total mass of $m$ is $m(\Q) = \mathcal{R}(X)$.
By i, iii, and iv, the support of $m$ satisfies $\operatorname{supp}(m) = E \supseteq E_* := E_0 \cup E_1 \cup E_2 \cup E_3$.
Hence, 
\[
	\Num(E, X) \ge n(E_*) \quad\text{and}\quad \mathcal{R}(X) \ge m(E_*)
\]
with equality if and only if Poonen's conjecture is true.
The rest of the proof is concerned with evaluating $m(E_*)$ and $n(E_*)$ unconditionally.
By i and ii,
\begin{equation} \label{eq:n_meas}
    n(E_*) = 5 + n(E_1) + n(E_2) - n(E_6) + n(E_3)
\end{equation}
for all $X \ge 29$. 
Similarly, 
\[
	m(E_*) = m(E_0) + m(E_1) + m(E_2) - m(E_6) + m(E_3).
\]
Now, 
\[
	m(E_0) 
	= 3 
	+ 3 
	+ 5 
	+ 2 
	+ 8 
        = 21
\]
for all $X \ge 29$;
trivially, 
\[
	m(E_3) = 6 n(E_3) \quad\text{and}\quad m(E_6) = 8 n(E_6);
\]
and finally, the inclusions in i, iii, and iv imply 
\[
	m(E_1) = 4n(E_1) + 2n(E_4) + 4n(E_6)
		\quad\text{and}\quad 
	m(E_2) = 4n(E_2) + 2n(E_5) + 4n(E_6)
\]
because the graph acquires 2 points if $t \in E_4$ (resp.~$E_5$) or 4 points if $t \in E_6$.
Combining the preceding four equation displays gives 
\begin{equation} \label{eq:8.5}
	m(E_*) = 21 + 4\big( n(E_1) + n(E_2) \big) + 6 n(E_3) + 2\big( n(E_4) + n(E_5) \big)
\end{equation}
for all $X \ge 29$.

\medskip 

\noindent \textit{Step 2: Heights.}
To estimate 
\[n(E_i) = \Num(E_i, X) \qquad (i = 1, \ldots, 6)\]
we use the fact that $E_i = \phi_i(U_i)$ for some explicit rational function $\phi_i$ and subset $U_i$ of $\Q$.
By basic properties of heights, 
if $e = \deg \phi$, 
then there exist constants $C \ge c > 0$ such that 
\[
	cH(t)^e \le H(\phi(t)) \le CH(t)^e
\]
for all $t$ in $\Q$.
In particular,
\[
	\Num(U, (X/C)^{1/e}) \le \Num(\phi(U), X) \le \Num(U, (X/c)^{1/e})
\]
for any $U \subseteq \Q$.
Since 
\[
	\Num(\P^1(\Q), X) = k X^2 + O(X \log X), \qquad k = \frac{12}{\pi^2}
\]
it follows that if $U$ is cofinite, 
then
\[
	\frac{k}{C^{2/e}} X^{2/e}
 \lesssim \Num(\phi(U), X) \lesssim
    \frac{k}{c^{2/e}} X^{2/e}.
\]

This shows two things: that for $i > 2$ the terms $\Num(E_i, X)$ are $O(\sqrt{X})$;
and that for $i = 1, 2$ 
such basic estimates are inadequate unless $C = c$.
At any rate, 
\eqref{eq:n_meas}
and 
\eqref{eq:8.5} simplify to 
\begin{equation} \label{eq:n_meas2}
    n_X(E_*) = \Num(E_1, X) + \Num(E_2, X) + O(\sqrt{X})
\end{equation}
and 
\begin{equation} \label{eq:8.6}
    m_X(E_*) = 4n_X(E_*) + O(\sqrt{X})
\end{equation}
respectively. Thus it remains to count $E_1$ and $E_2$.

To that end,
let $\phi(t) = \phi_c(t) = c/4 - t^2$ where $c \in \{1, -3\}$.
Then 
\begin{align*}
	\Num(E_1, X) &= \Num(\phi_1(\Q^\times), X) + O(1) \\ 
\shortintertext{and}
	\Num(E_2, X) &= \Num(\phi_{-3}(\Q^\times), X) + O(1).
\end{align*}
Since $\phi$ is even, 
\begin{equation} \label{eq:8.7}
	\Num(\phi(\Q^\times), X) 
	= \#\{(a, b) : a, b > 0, \, \gcd(a, b) = 1, \, H(\phi(a/b)) \le X\}.
\end{equation}
In order to compute the na\"ive height, 
we must express
\[
\phi(a/b) = c/4 - a^2/b^2 = \frac{cb^2 - 4a^2}{4b^2}
\]
in lowest terms.
Some elementary fiddling shows that 
\[
d := \gcd(cb^2 - 4a^2, 4b^2) = 
\begin{cases}
		1	& b \equiv 1 \pmod 2 \\
		16	& b \equiv 2 \pmod 4 \\ 
		4	& b \equiv 0 \pmod 4
	\end{cases}
\]
and it's not hard to see that 
\[
    H(\phi(a/b)) = \max \{\lvert cb^2 - 4a^2 \rvert, 4b^2 \}/d \le X
\]
if and only if 
\[
    b \le \tfrac{1}{2} \sqrt{dX}
\quad\text{and}\quad 
    a \le \tfrac{1}{2}\sqrt{dX + cb^2}.
\]
Note that these claims rely on $c$ being 1 mod 4 and less than 4.

Thus, the region on the r.h.s.~of \eqref{eq:8.7}
decomposes---according to the residue class of $b$---into parts of the form 
\[
    S_{q,r} := \{(a, b) : 1 \le b \le \tfrac{1}{2}\sqrt{dX}, \,
    b \equiv r \, (q), \,
    1 \le a \le \tfrac{1}{2}\sqrt{dX + cb^2}, \,
    \gcd(a, b) = 1\}
\]
where $(q, r) \in \{(2, 1), (4, 2), (4, 0)\}$ and $d$ is 1, 16, or 4 accordingly (illustrated in Figures \ref{fig:3} and \ref{fig:1} in \textcolor{my_red}{\textbf{red} $\bullet$}, 
\textcolor{my_green}{\textbf{green} $\circ$}, and 
\textcolor{my_blue}{\textbf{blue} $\times$} respectively).

\begin{figure}
\centering
\begin{tikzpicture}[smooth,scale=0.33]
\draw[<->] (-2,0)--(24,0);
\draw[<->] (0,-2)--(0,22);
\draw (0,0) node[below left]{$0$};
\foreach \i in {5,10,15,20} {
\draw (\i,.1)--(\i,-.1) node[below]{$\i$};
\draw (.1,\i)--(-.1,\i) node[left]{$\i$};
}

\draw[my_red,thick] (0,5)--(2.5,5);
\draw[my_blue,thick,dashed] (0,10)--(5,10);
\draw[my_green,line width=1.5,loosely dotted] (0,20)--(10,20);

\draw[my_red,thick,domain=2.5:5] plot(\x,{2*sqrt((25-\x*\x)/3)});
\draw[my_blue,thick,dashed,domain=5:10] plot(\x,{2*sqrt((100-\x*\x)/3)});
\draw[my_green,line width=1.5,loosely dotted,domain=10:20] plot(\x,{2*sqrt((400-\x*\x)/3)});

\foreach \point in {(1,1), (1,3), (1,5),
(2,1), (2,3), (2,5),
(3,1),
(4,1), (4,3)} 
\fill[my_red] \point circle (.17);

\foreach \point in {(1,4), (1,8), 
(3,4), (3,8),
(5,4), (5,8),
(7,4), (7,8),
(9,4)}
{
	\draw[my_blue,line width=1.3] \point--++(45:.2);
	\draw[my_blue,line width=1.3] \point--++(135:.2);
	\draw[my_blue,line width=1.3] \point--++(225:.2);
	\draw[my_blue,line width=1.3] \point--++(315:.2);
}

\foreach \point in {(1,2), (1,6), (1,10), (1,14), (1,18),
(3,2), (3,10), (3,14),
(5,2), (5,6), (5,14), (5,18),
(7,2), (7,6), (7,10), (7,18),
(9,2), (9,10), (9,14),
(11,2), (11,6), (11,10), (11,14), (11,18),
(13,2), (13,6), (13,10), (13,14),
(15,2), (15,14),
(17,2), (17,6), (17,10),
(19,2), (19,6)}
\draw[my_green,line width=.9] \point circle (.15);
\end{tikzpicture}
\caption{$\Num(\phi_{-3}(\Q^\times), 100) = \textcolor{my_red}{9} + \textcolor{my_green}{35} + \textcolor{my_blue}{9} = 53$} \label{fig:3}
\end{figure}

\begin{figure}
\centering
\begin{tikzpicture}[smooth,scale=0.33]
\draw[<->] (-2,0)--(24,0);
\draw[<->] (0,-2)--(0,22);
\draw (0,0) node[below left]{$0$};
\foreach \i in {5,10,15,20} {
\draw (\i,.1)--(\i,-.1) node[below]{$\i$};
\draw (.1,\i)--(-.1,\i) node[left]{$\i$};
}

\draw[my_red,thick] (0,5)--({5*sqrt(5)/2},5);
\draw[my_blue,thick,dashed] (0,10)--({5*sqrt(5)},10);
\draw[my_green,line width=1.5,loosely dotted] (0,20)--({10*sqrt(5)},20);

\draw[my_red,thick,domain=5:5*sqrt(5)/2] plot(\x,{2*sqrt(\x*\x-25)});
\draw[my_blue,thick,dashed,domain=10:5*sqrt(5)] plot(\x,{2*sqrt(\x*\x-100)});
\draw[my_green,line width=1.5,loosely dotted,domain=20:10*sqrt(5)] plot(\x,{2*sqrt(\x*\x-400)});

\foreach \point in {(1,1), (1,3), (1,5),
(2,1), (2,3), (2,5),
(3,1), (3,5),
(4,1), (4,3), (4,5),
(5,1), (5,3)} 
\fill[my_red] \point circle (.17);

\foreach \point in {(1,4), (1,8), 
(3,4), (3,8),
(5,4), (5,8),
(7,4), (7,8),
(9,4), (9,8)}
{
	\draw[my_blue,line width=1.3] \point--++(45:.2);
	\draw[my_blue,line width=1.3] \point--++(135:.2);
	\draw[my_blue,line width=1.3] \point--++(225:.2);
	\draw[my_blue,line width=1.3] \point--++(315:.2);
}

\foreach \point in {(1,2), (1,6), (1,10), (1,14), (1,18),
(3,2), (3,10), (3,14),
(5,2), (5,6), (5,14), (5,18),
(7,2), (7,6), (7,10), (7,18),
(9,2), (9,10), (9,14),
(11,2), (11,6), (11,10), (11,14), (11,18),
(13,2), (13,6), (13,10), (13,14), (13,18),
(15,2), (15,14),
(17,2), (17,6), (17,10), (17,14), (17,18),
(19,2), (19,6), (19,10), (19,14), (19,18)}
\draw[my_green,line width=.9] \point circle (.15);
\end{tikzpicture} 
\caption{$\Num(\phi_1(\Q^\times), 100) = \textcolor{my_red}{13} + \textcolor{my_green}{41} + \textcolor{my_blue}{10} = 64$} \label{fig:1}
\end{figure}

\medskip 

\noindent \textit{Step 3: Analytic number theory.}
Our final task is to count the number of points in $S_{q,r}$.
By Fubini's theorem, 
\begin{equation} \label{eq:8.8}
\# S_{q,r}
= \!\! \sum_{\substack{b \le \frac{1}{2} \! \sqrt{dX} \\ b \equiv r \, (q) }} \! \! F_b\big(\tfrac{1}{2} \sqrt{dX + cb^2}\big)
\end{equation}
where $F_b(Y) := \#\{1 \le a \le Y : \gcd(a, b) = 1\}$.
Appealing to 
\[\sum_{k \mid n} \frac{\mu(k)}{k} = \frac{\varphi(n)}{n}\]
we have 
\begin{align*}
F_b(Y) 
&= \sum_{1 \le a \le Y} \sum_{k \mid \gcd(a, b)} \mu(k) \\
&= \sum_{k \mid b} \mu(k) \sum_{1 \le a \le Y, \, k \mid a} 1 \\
&= \sum_{k \mid b} \mu(k) \Big\lfloor \frac{Y}{k} \Big\rfloor \\ 
&= Y \sum_{k \mid b} \frac{\mu(k)}{k} - \sum_{k \mid b} \mu(k) \Big\{ \frac{Y}{k} \Big\}
\end{align*}
whence 
\[\Big \lvert F_b(Y) - Y\frac{\varphi(b)}{b} \Big \rvert \le \sum_{k | b} \Big \lvert \Big\{ \frac{Y}{k} \Big\} \Big \rvert < 2^{\omega(b)}.\]
because the sum is supported on the set of primes dividing $b$.
It's well known that $2^{\omega(b)} \le \tau(b)$,
the number of divisors of $b$; 
and by Dirichlet's theorem, 
\[\sum_{b \le Y} \tau(b) \ll Y \log Y.\]
Plugging these into \eqref{eq:8.8} yields
\begin{align} \label{eq:8.9}
\# S_{q,r}
 &= \!\! \sum_{\substack{b \le \frac{1}{2} \! \sqrt{dX} \\ b \equiv r \, (q) }} \! \! \Big(\tfrac{1}{2} \sqrt{dX + cb^2} \cdot \frac{\varphi(b)}{b} + O(\tau(b))\Big) \nonumber 
 \\
 &= \tfrac{1}{2} G_{q,r} \big(\tfrac{1}{2}\sqrt{dX}\big) + O\big(\sqrt{X} \log X\big) 
\end{align}
where 
\[
    G_{q,r}(Y) = \sum_{\substack{b \le Y \\ b \equiv r \,(q)}} \sqrt{4Y^2 + cb^2} \cdot \frac{\varphi(b)}{b}.
\]

It remains to estimate $G_{q,r}$.
With an eye to applying Abel summation, 
we first show that there exist constants $C_{q,r} > 0$ such that 
\begin{equation} \label{eq:Hqr}
    H_{q,r}(Y) := \sum_{\substack{b \le Y \\ b \equiv r \, (q)}} \! \frac{\varphi(b)}{b} \\
              = C_{q,r} Y + O(\log Y).
\end{equation}
Indeed, 
\[
H_{q,r}(Y)  = \sum_{\substack{b \le Y \\ b \equiv r \, (q)}} \sum_{k | b} \frac{\mu(k)}{k}
           = \sum_{k \le Y} \frac{\mu(k)}{k} \#\{1 \le b \le Y : k \text{ divides $b$ and } b \equiv r \ (q)\}.\]
The set of $b$'s can be counted
using a generalization of the Chinese Remainder Theorem 
to non-coprime moduli:
the system 
\begin{align*}
x & \equiv r \pmod n \\ 
x & \equiv s \pmod m
\end{align*}
has a unique solution modulo $\operatorname{lcm}(n, m)$ 
if and only if $r \equiv s$ (mod $\gcd(n, m)$).
Thus
\[
\#\{1 \le b \le Y : b \equiv 0 \ (k) \text{ and } b \equiv r \ (q)\}
= [\gcd(k, q)|r] \cdot \Big\lfloor \frac{Y}{\lcm(k, q)} \Big\rfloor\]
where the Iverson bracket $[P]$ is 0 or 1  according as the statement $P$ is true or false.
Writing $a_k = \mu(k) \gcd(k, q) [\gcd(k, q)|r]$
it follows that 
\[
H_{q,r}(Y) = \frac{1}{q} \sum_{k \le Y} \frac{a_k}{k^2} \cdot Y + O(\log Y)
\]
since the error is bounded by the $\lfloor Y \rfloor$\textsuperscript{th} harmonic number.
Now, $|a_k| \le q$ for all $k$, so
\[\sum_{k\le Y} \frac{a_k}{k^2} = \sum_{k=1}^\infty \frac{a_k}{k^2} + O(1/Y)\]
and therefore, 
\[H_{q,r}(Y) = \underbrace{\frac{1}{q} \sum_{k=1}^\infty \frac{a_k}{k^2}}_{C_{q,r}} \mathbin{\cdot} Y + O(\log Y).\]

We shall also require the exact numerical value of the constant $C_{q,r}$ for our three particular pairs $(q, r)$.
The coefficients $a_k$ are multiplicative, 
and a straightforward computation with Euler products supplies the neat formula 
\begin{equation} \label{eq:Cqr}
C_{q, r} 
= \frac{6}{\pi^2}
\cdot \frac{1}{\varphi(q)} \prod_{p \mid q} \frac{p - [p|r]}{p + 1}.
\end{equation}

Without further ado, let $g(t) = \sqrt{4Y^2 + ct^2}$. 
Note that 
\[g(Y) = \sqrt{4 + c} \cdot Y \quad\text{and}\quad g'(t) = \frac{ct}{g(t)}.\]
By Abel summation, 
\[G_{q,r}(Y) = \sqrt{4 + c} \cdot Y H_{q,r}(Y) - \int_1^Y \frac{ct H_{q,r}(t)}{g(t)} \, dt. \]
Using \eqref{eq:Hqr}
we get
\[\sqrt{4 + c} \cdot Y H_{q,r}(Y) = \sqrt{4 + c} \cdot C_{q,r} Y^2 + O(Y \log Y)\]
while 
\[\int_1^Y \frac{ct H_{q,r}(t)}{g(t)} \, dt
= C_{q,r} \int_1^Y \frac{ct^2}{g(t)} \, dt + O\Big(\int_1^Y \frac{t \log(t)}{g(t)} \, dt\Big).\]
Since $Y \ge t \ge 1$ in the range of integration,
\[g(t) = \sqrt{4Y^2 + ct^2} \ge \sqrt{4t^2 + ct^2} = \sqrt{4+c} \cdot t\]
so that
\[\int_1^Y \frac{t \log t}{g(t)} \, dt \ll \int_1^Y \log t \ll Y \log Y.\]
For the main term we use the explicit antiderivative
\[
\int \frac{ct^2}{\sqrt{4Y + ct^2}} \, dt 
= \frac{1}{2} t \sqrt{4Y + ct^2} - \frac{2}{\sqrt{|c|}} Y^2 A\Big(\frac{t\sqrt{|c|}}{2Y}\Big)
\]
where $A(x) = \operatorname{arsinh}(x)$ if $c > 0$ and $A(x) = \arcsin(x)$ if $c < 0$ 
(the point is that $A'(x) = (1 + \sgn(c) x^2)^{-1/2}$).
In either case, $A(x) = x + O(x^3)$ as $x \to 0$.
Since 
\[
\frac{1}{2} g(1) - \frac{2}{\sqrt{|c|}} Y^2 A\Big(\frac{\sqrt{|c|}}{2Y}\Big) 
= O(Y^{-1})
\]
it follows that 
\begin{equation} \label{eq:8.12}
G_{q,r}(Y) = \underbrace{\Big[\frac{\sqrt{4 + c}}{2} + \frac{2}{\sqrt{|c|}}A\Big(\frac{\sqrt{|c|}}{2}\Big)\Big]}_{\gamma(c)} C_{q,r} Y^2 + O(Y \log Y).
\end{equation}

\medskip

\noindent 
\textit{Step 4: Wrap-up.}
Inserting \eqref{eq:8.12} with $Y = \frac{1}{2}\sqrt{dX}$ into \eqref{eq:8.9} yields
\[
\# S_{q,r} 
= \frac{1}{8} \gamma(c) d C_{q,r} X + O(\sqrt{X} \log X).
\]
Summing this over $(q, r) \in \{(2, 1), (4, 2), (4, 0)\}$ gives 
\[
\Num(\phi_c(\Q^\times), X) 
= \frac{1}{8} \gamma(c) \big(C_{2,1} + 16 C_{4, 2} + 4 C_{4, 0}\big) X + O(\sqrt{X} \log X)
\]
by \eqref{eq:8.7}. 
The formula \eqref{eq:Cqr} implies 
\[
    C_{2,1} = \frac{4}{\pi^2} \quad\text{and}\quad C_{4,2} = C_{4,0} = \frac{1}{\pi^2}
\]
so
\[
    \Num(\phi_c(\Q^\times), X)
    = \frac{3}{\pi^2} \gamma(c) X + O(\sqrt{X} \log X).
\]
Plugging this into \eqref{eq:n_meas2}
shows that
\[
n_X(E_*) = \frac{3}{\pi^2} \big(\gamma(1) + \gamma(-3)\big) X + O(\sqrt{X} \log X)
\]
and the lower bound in Theorem \ref{thm:gold} follows on evaluating 
\[
\gamma(1) = \frac{\sqrt{5}}{2} + 2 \log \frac{1 + \sqrt{5}}{2} \quad\text{and}\quad\gamma(-3) = \frac{1}{2} + \frac{2\pi}{3\sqrt{3}}.
\qedhere
\]
\end{proof}

\begin{rmk}
Without more information 
about the distribution of preperiodic points, 
the upper bound in Theorem \ref{thm:HR} probably cannot be improved.
Indeed, one can show (using a basic case of Shiu's theorem \cite[Theorem 1]{Shiu}) that 
\[\sum_{n \le x} \omega(n) \log \omega(n) = x \log \log x \log \log \log x + O(x \log\log\log\log x)\]
and heuristically one would expect
\[\sum_{n \le x} \omega(n) \log \omega(n) \frac{\varphi(n)}{n} \asymp \sum_{n \le x} \omega(n) \log \omega(n).\]
\end{rmk}

\begin{rmk} \label{rmk:cv}
    Every degree-2 rational map with a critical point of period 2 is either totally ramified
    or else conjugate to a member of the family
    \[
        f_t(z) = \frac{t}{z^2 - z}
    \]
    via a M\"obius map 
    moving the critical point, its image, and its non-periodic preimage
    to $\infty$, 0, and 1 respectively.
    By Example \ref{eg:2}, 
    the average number of preperiodic points of this family is 3.

    The classification theorem of Canci and Vishkautsan \cite{CV} 
    implies that
    \[
    \mathcal{R}(X) \ge 2 \cdot \Num(\psi(\Q), X) + O(X^{2/3}), \qquad \psi(r) = r^2 - r,
    \]
    with equality if no $f_t$ has a cycle longer than 2.
    (\textit{Sketch.} The graphs in rows 2, 3, 5, 6, 7 of \cite[Table 5.2]{CV}
    are parametrized by rational curves of degrees 
    3, 2, 4, 6, 6 respectively.)
    By a computation completely analogous to the one carried out in the proof of Theorem \ref{thm:gold}, and noting that $\psi(r) = \psi(1 - r)$,
    it can be shown (cf.~Figure \ref{fig:CV}) that 
    \begin{equation} \label{eq:CV_asymp}
        \Num(\psi(\Q), X) = \frac{3}{\pi^2}\left(\frac{\sqrt{5}}{2} + 2 \log \frac{1 + \sqrt{5}}{2}\right) X + O(\sqrt{X} \log X).
    \end{equation}
\end{rmk}

\begin{figure}[h]
\centering 
\begin{tikzpicture}[smooth,scale=0.33]
\draw[<->] (-2,0)--(18,0);
\draw[<->] (0,-2)--(0,12);
\draw (0,0) node[below left]{$0$};
\foreach \i in {5,10,15} 
\draw (\i,.1)--(\i,-.1) node[below]{$\i$};
\foreach \j in {5,10} 
\draw (.1,\j)--(-.1,\j) node[left]{$\j$};

\draw[my_purple,thick] (0,0)--(5,10)--({5+5*sqrt(5)},10);
\draw[my_purple,domain=10:5+5*sqrt(5),thick] plot(\x,\x-100/\x);

\foreach \point in {(1,1), (1,2), 
(2,1), (2,3),
(3,1), (3,2), (3,4), (3,5),
(4,1), (4,3), (4,5), (4,7),
(5,1), (5,2), (5,3), (5,4), (5,6), (5,7), (5,8), (5,9),
(6,1), (6,5), (6,7),
(7,1), (7,2), (7,3), (7,4), (7,5), (7,6), (7,8), (7,9), (7,10),
(8,1), (8,3), (8,5), (8,7), (8,9),
(9,1), (9,2), (9,4), (9,5), (9,7), (9,8), (9,10),
(10,1), (10,3), (10,7), (10,9),
(11,2), (11,3), (11,4), (11,5), (11,6), (11,7), (11,8), (11,9), (11,10),
(12,5), (12,7),
(13,6), (13,7), (13,8), (13,9), (13,10),
(14,9)} 
\fill[my_purple] \point circle (.17);
\end{tikzpicture}
\caption{$\Num(\psi(\Q), 100) = \textcolor{my_purple}{65}$} \label{fig:CV}
\end{figure}

\begin{rmk} \label{rmk:geometry}
The form of the asymptotics obtained
(in particular, the shapes of the constants) 
is consistent with the following ``geometry of numbers'' heuristics.

    Let $\phi \in \Rat_e(\Q)$ and pick a lift $f, g$ defined over $\Z$.
    Since $\phi$ covers its own image,
\[
    \Num(\phi(\P^1(\Q)), X)
        = \!\! 
    \sum_{\substack{P \in \P^1(\Q) \\ H(\phi(P)) \le X}} 
    \! \frac{1}{\#\big(\phi^{-1}(\phi(P)) \cap \P^1(\Q)\big)}.
\]
Put 
$j = \#{\big(\phi^{-1}(\phi(T : 1)) \cap \P^1(\Q(T))\big)}$.
By Hilbert's irreducibility theorem,
the set 
\[
\{P \in \P^1(\Q) : \#\big(\phi^{-1}(\phi(P)) \cap \P^1(\Q)\big) \ne j\}
\]
is thin.
From this and standard height estimates it follows that 
\begin{equation} \label{eq:geometry1}
\Num(\phi(\P^1(\Q)), X) 
= \frac{1}{j} \#\{P \in \P^1(\Q) : H(\phi(P)) \le X\} + O(X^{1/e}).
\end{equation}
Note that if $e = 2$ then the error term is actually $O(1)$.
The main term 
    is equal to \\ (*)
    \[
    \frac{1}{2j} \sum_d \# \{(a, b) \in \Z^2_\vis : \gcd(f(a, b), g(a, b)) = d, \, \max\{|f(a, b)|, |g(a, b)|\} \le dX\}
    \]
    where $\Z_\vis = \{(a, b) \in \Z^2 : \gcd(a, b) = 1\}$ is the set of lattice points ``visible'' from the origin, and the factor of a half accounts for the symmetry $(a : b) = (-a : -b)$
    coming from the roots of unity in $\Q$.
    Note that for any $r > 0$, the set
    \[
        \{(x, y) \in \R^2 : \max\{|f(x, y)|, |g(x, y)|\} \le r\}
    \]
    is precisely the preimage under $\Phi = (f, g)$ of the $\ell_\infty$-ball $B(0, r)$. 
    By homogeneity, 
    $\Phi^{-1}(B(0, r)) = r^{1/e} \Phi^{-1}(B(0, 1))$.
    Let $D_\Phi = \Phi^{-1}(B(0, 1))$ 
    and let $\vol(D_\Phi)$ denote its area.
    Then a basic principle of the geometry of numbers 
    (see, e.g.,~\cite[Chapter 3, Theorem 5.1]{Lang}) 
    implies that
    the number of lattice points in $\Phi^{-1}(B(0, r))$ is 
    \begin{equation} \label{eq:geometry2}
        \#(\Z^2 \cap r^{1/e}D_\Phi) = \vol(D_\Phi) r^{2/e} + O(r^{1/e}).
    \end{equation}
    where the $O$-term depends on $\Phi$.
    
    Heuristically, since 
    \[
        \#(\Z^2 \cap B(0, r)) = (2\lfloor r \rfloor + 1)^2 \ \text{ and } \ 
        \#(\Z^2_\vis \cap B(0, r)) = 2 \, \Num(\P^1(\Q), r),
    \]
    one expects that replacing $\Z^2$ by $\Z^2_\vis$ in \eqref{eq:geometry2}
    should introduce a factor of $6/\pi^2$ and an error of $O(r^{1/e} \log r)$.
    By the Chinese Remainder Theorem,
    further restricting $(a, b)$ 
    to satisfy $\gcd(f(a, b), g(a, b)) = d$
    should introduce a rational scale factor $C_d$ depending on $\Phi$, 
    seeing as 
    \[
    d \mid f(a, b) \text{ and } d \mid g(a, b) 
    \iff 
    (a, b) \in \pi_d^{-1}\big(V(f, g)(\Z/d\Z)\big)
    \]
    where $\pi_d : \Z^2_\vis \to \P^1(\Z/d\Z)$ is the reduction map
    to the projective line over the ring of integers mod $d$.
    Thus, 
    one expects that each summand in (*) 
    is asymptotically
    \begin{equation} \label{eq:geometry3}
        \frac{6}{\pi^2} C_d \vol(D_\Phi) d^{2/e} X^{2/e} + O(X^{1/e} \log X).
    \end{equation}
    
    As for the number of terms, the elimination property of the resultant implies that $d$ is a divisor of $R_\Phi := \Res(f, g)$.
    To wit,
    one can find (by inverting the Sylvester matrix) 
    homogeneous forms $A_1, \ldots, A_4$ in $\Z[f_e, g_e, \ldots, f_0, g_0][x, y]$ of degree $e - 1$
    such that 
    \[
        A_1 f + A_2 g = R_\Phi x^{2e-1} 
        \quad\text{and}\quad 
        A_3 f + A_4 g = R_\Phi y^{2e-1}.
    \]
    Setting $x = a$ and $y = b$,
    and noting that $v_p(A_i(a, b)) \ge 0$ (as $v_p(f, g) \ge 0$),
    yields
    \[
    \min\{v_p(f(a, b)), v_p(g(a, b))\}
    \le v_p(R_\Phi) + (2e - 1) \min\{v_p(a), v_p(b)\}
    \]
    for all primes $p$. 
    The l.h.s.~is $v_p(d)$ and the r.h.s.~is $v_p(R_\Phi)$,
    whence the claim.
    
    Summing \eqref{eq:geometry3} over all $d$ dividing $R_\Phi$,
    we are thus led to the heuristic formula
    \begin{equation} \label{eq:geometry4}
        \Num(\phi(\P^1(\Q)), X)
        = 
        \frac{3}{j \pi^2} \vol(D_\Phi) \!\! \sum_{\ d \mid R_\Phi} \! C_d \, d^{2/e} X^{2/e} + O(X^{1/e} \log X).
    \end{equation}
    For $\Phi(x, y) = (cy^2 - 4x^2, 4y^2)$ 
    we have $j = e = 2$, 
    $\vol(D_\Phi) = \frac{1}{2}\gamma(c)$, $R_\Phi = 4^4$, and
    \[
    C_d = \begin{cases} 
    2/3 & d = 1, \\
    1/6 & d = 4 \text{ or } 16
    \end{cases} 
    \]
    so that $C_1 + 4C_4 + 16 C_{16} = 4$.
\end{rmk}

\bibliographystyle{plain}
\bibliography{refs}

\end{document}